\documentclass[a4paper,11pt]{article}

\usepackage[a4paper,margin=2.5cm]{geometry}

\usepackage[T1]{fontenc}
\usepackage{lmodern}    

\usepackage{setspace}
\usepackage{amsmath,amssymb,amsthm}

\usepackage{graphicx}

\usepackage[colorlinks=true,
            linkcolor=blue,
            citecolor=blue,
            urlcolor=blue]{hyperref}
\usepackage{colonequals}
\usepackage{comment}
\usepackage{cite}
\usepackage{hyperref}
\usepackage{url}
\usepackage{doi}
\usepackage{scalerel}[2014/03/10]
\usepackage[usestackEOL]{stackengine}
\def\intavg{\,\ThisStyle{\ensurestackMath{%
    \stackinset{c}{0\LMpt}{c}{0\LMpt}{\SavedStyle-}{\SavedStyle\phantom{\int}}}%
    \setbox0=\hbox{$\SavedStyle\int\,$}\kern-\wd0}\int}
\usepackage{amsthm}
\theoremstyle{plain}
\newtheorem{theorem}{Theorem}[section]

\theoremstyle{definition}
\newtheorem{definition}[theorem]{Definition}

\theoremstyle{lemma}
\newtheorem{lemma}[theorem]{Lemma}

\theoremstyle{Remark}
\newtheorem{remark}[theorem]{Remark}

\theoremstyle{proposition}

\theoremstyle{corollary}
\newtheorem{corollary}[theorem]{Corollary}

\theoremstyle{example}

\theoremstyle{assumption}

\usepackage{systeme}
\usepackage{authblk}

\title{Well-posedness for  the spatially inhomogeneous Fermi-Dirac-Fokker-Planck equation }
\author[]{Gayrat Toshpulatov\thanks{Corresponding author: {\tt gayrat.toshpulatov@uni-muenster.de}}
}

  \affil[]{\begin{small} Institut f\"ur Analysis und Numerik,\\ Fachbereich Mathematik und 
Informatik der Universit\"at M\"unster,\\ Orl\'eans-Ring 10, 48149 M\"unster, Germany.  \end{small}} 
\date{\today}
\begin{document}
\maketitle
\pagestyle{plain}
\begin{abstract} 
This paper is concerned with space inhomogeneous Fermi-Dirac-Fokker-Planck equation posed on a classical kinetic phase space. The nonlinear factor $f(1-f)$ appears both in the transport term and in the collison part of the Fokker-Planck operator, accounting for the  exclusion principle of fermions. Solutions are subject to the physical constraint $0\leq f\leq 1$.

Previous existing results have mainly been obtained under suitable smallness or close-to-equilibrium assumptions. A main difficulty stems from the nonlinear and  degenerate parabolic structure of the equation.  We prove the existence and uniqueness of global solutions without  any smallness or close-to-equilibrium assumptions. To construct solutions, we use a vanishing viscosity approximation by adding a Laplacian in the spatial variable. We derive uniform estimates for the regularized problem and obtain compactness in the space of functions of bounded variation  $(BV),$ allowing us to pass to the limit as the spatial viscosity vanishes. Since the limiting equation is degenerate with a nonlinear transport, we formulate an appropriate notion of  entropy solutions incorporating the kinetic structure of the equation. We prove uniqueness within this class and show that every vanishing-viscosity limit satisfies the entropy formulation. Our result provides a global well-posedness theory  beyond the perturbative regime.
\end{abstract}

\textcolor{black}{\begin{small}\textbf{Keywords:} Degenerate parabolic equation, Fermi-Dirac-Fokker-Planck equation, semiclassical kinetic equation,   entropy solutions, functions of bounded variation, vanishing viscosity approach.\\
\textbf{2020 Mathematics Subject Classification:} 35Q84, 35Q40, 35Q82, 82C40.
\end{small}}
\tableofcontents

\section{Introduction}
One approach to studying the statistical evolution of a rarefied gas of particles is to describe the dynamics in terms of a kinetic equation for the particle distribution function $f$ in phase space. Such equations typically consist of a transport part, describing the free motion of the particles, and a collision part, accounting for their  interactions. With the advent of quantum mechanics, classical kinetic equations were extended to incorporate quantum statistical effects, in particular the Pauli exclusion-inclusion principle. This line of research was initiated by Nordheim \cite{Nordh} and by Uehling and Uhlenbeck \cite{U.U}. Since then, several kinetic models for interacting quantum particles subject to these principles have been introduced and studied; see, e.g., \cite{Book, Book1, Dolb, E.M1, E.M.Vel, Lu.Wenn}. Such models  have found applications in, among others, the theory of electron conductivity \cite{Nordh} and the modeling of electron interactions in semiconductor devices \cite{Book1}.

A characteristic feature of quantum kinetic equations is the presence of the factor $1\pm f$ in the collision operator. This factor accounts for the quantum statistical effects of indistinguishable particles and leads, at equilibrium, to the Bose-Einstein and Fermi-Dirac distributions, respectively.
Balazs \cite{Der} and Kaniadakis \cite{K} (see also \cite{KQ1, KQ2}) introduced a nonlinear kinetic model 
in which the factor $1\pm f$ appears not only in the Fokker–Planck collision operator but also in the transport part
\begin{equation}\label{DEq0}
\begin{cases}
\partial_t f+p\cdot \nabla_x (f+\kappa f^2)
=\operatorname{div}_{p}\big(\nabla_p f+p(f+\kappa f^2)\big), \, \, \, \, \, \,x,p\in \mathbb{R}^d,\,t>0,\\
f(0,x,p)=f_0(x,p).
\end{cases}
\end{equation}
Here, $t\geq0$, $x\in\mathbb{R}^d$, and $p\in\mathbb{R}^d$ denote time, position, and momentum, respectively. The unknown $f=f(t,x,p)\geq0$ represents the distribution function of indistinguishable quantum particles obeying the Pauli inclusion-exclusion principle.
 The parameter $\kappa$ distinguishes the fermionic and bosonic cases: $\kappa=-1$ corresponds to the Pauli exclusion principle for fermions, while $\kappa=1$ corresponds to the  inclusion principle for bosons. 
For $\kappa=0$, Equation \eqref{DEq0} reduces to the classical kinetic Fokker--Planck equation \cite{H.Risken}.

In the fermionic case, the distribution function is required to satisfy the constraint
$$
0\leq f\leq 1.
$$
For both values of $\kappa,$ 
whenever $f(t,x,p)$ is a sufficiently smooth solution of \eqref{DEq0}, one has  \textit{conservation of mass:}
\begin{equation*}
\int_{\mathbb{R}^{2d}}f(t,x,p)dpdx=\int_{\mathbb{R}^{2d}}f_0(x,p)dpdx, \, \, \, \, \forall\, t\geq 0.
\end{equation*}

If we simplify \eqref{DEq0} by neglecting the dependence of the solution on the $x$-variable, then we get the spatially homogeneous equation
\begin{equation}
\label{Eqhom}
\begin{cases}
\partial_t f(t,p)=\operatorname{div}_{p}(\nabla_pf(t,p)+p(f(t,p)+\kappa f^2(t,p)))\\
f(0,p)=f_0(p).
\end{cases}
\end{equation} 
The spatially homogeneous equation \eqref{Eqhom} has been studied comprehensively. In the fermionic case $\kappa=-1$, global existence and uniqueness of solutions in arbitrary dimension $d\in\mathbb{N}$ were established in \cite{Car} by using the fundamental solution of the linear Fokker--Planck operator together with suitable moment estimates. The same work also investigated the long-time behavior and showed that solutions converge exponentially fast to the corresponding Fermi-Dirac equilibrium as $t\to\infty$. The bosonic case $\kappa=1$ exhibits a substantially different behavior. Global existence and uniqueness were established in \cite{2D} in dimension $d=2$. In arbitrary dimension, local-in-time existence was obtained in \cite{Car}. However, as shown by Toscani \cite{To}, finite-time blow-up can occur in dimensions $d\geq3$. More precisely,  Toscani \cite{To} proved that in dimension
$d\geq 3$ there are certain solutions which blow up if the initial mass is sufficiently large or, for any given
supercritical mass, if the initial second moment is small enough. The continuation beyond this blow-up and long time behavior was studied in \cite{Num, Hopf}.    
 
 The spatially inhomogeneous equation \eqref{DEq0} is considerably more challenging. Existing results have mainly been obtained under suitable smallness or close-to-equilibrium assumptions. In the case of a periodic spatial domain, i.e., $x\in\mathbb{T}^d$, the existence of smooth solutions and their exponential convergence to the global equilibrium were established in \cite{NS} for initial data sufficiently close to equilibrium in an appropriate Sobolev space. The proof relies on an $H^1$ hypocoercivity method developed in the previous works \cite{NM, V}. For the whole-space problem, i.e., $x\in\mathbb{R}^3$, Luo and Zhang \cite{Chi} obtained  well-posedness and  algebraic decay rates of  solutions toward the global equilibrium 
  under a suitable  smallness assumption on the initial data in a Sobolev space. They used  the high-order Sobolev energy method developed by Guo \cite{Guo}.
We also mention the recent work \cite{APT}, where, under the assumption that a global weak solution exists, exponential convergence to the global equilibrium was established in a weighted $L^2$ space without assuming that the initial data is close to equilibrium.

  A natural question is whether, in the fermionic case,  one can prove existence and uniqueness of solutions without any smallness or close-to-equilibrium assumptions.
A major difficulty difficulty stems from the degenerate parabolic structure of  \eqref{DEq0}: the Fokker-Planck operator acts only in the momentum variable $p$, while there are no second-order derivatives in the spatial variable $x$. The transport operator, on the other hand, acts in the $x$ variable. Thus, the parabolic  regularization with respect to the  $x$-variable  is not available. 
A further difficulty arises from the nonlinear term $f(1-f)$, which appears in both the transport and the Fokker-Planck parts of the equation. This nonlinear structure prevents a direct application of standard existence and regularity theories for linear kinetic equations. In particular, because of this nonlinearity it is not clear if one can expect  hypoelliptic regularization (which is well-known  for the linear kinetic Fokker-Planck equation \cite{AT1,AT2, V, Hera, Zha}) to recover  regularity with respect to the $x$-variable.   Our analysis therefore requires new tools  and modifications of existing techniques.

 In this paper, we study the nonlinear, spatially inhomogeneous kinetic equation \eqref{DEq0} in the fermionic case $\kappa=-1$.   We prove the existence and uniqueness of global solutions without imposing any smallness or close-to-equilibrium assumptions on the initial data. The proof is based on the vanishing viscosity method: For a small parameter $\varepsilon>0$ we consider the following uniformly  parabolic equation
\begin{equation}\label{Eq}
\begin{cases}
\partial_t f_{\varepsilon}+p\cdot \nabla_x (f_{\varepsilon}-f_{\varepsilon}^2)=\mathrm{div}_{p}(\nabla_p f_{\varepsilon}+p(f_{\varepsilon}-f_{\varepsilon}^2))+\varepsilon \Delta_x f_{\varepsilon},\, \, \, \, \, \,x,p\in \mathbb{R}^d,\,t>0,\\
f_{\varepsilon}(0,x,p)=f_0(x,p).
\end{cases}
\end{equation}
This equation differs from \eqref{DEq0} with $\kappa=-1$ by the addition of the term $\varepsilon\Delta_x f_\varepsilon$, which makes it uniformly parabolic.
 We will prove existence and uniqueness of global solutions to this equation and obtain regularity estimates. Our analysis on \eqref{Eq} is motivated by the work \cite{Car} where existence and uniqueness of solutions were proven for the spatially homogeneous equation \eqref{Eqhom} in the fermionic case $\kappa=-1$. We extend the existence analysis in \cite{Car} to the spatially inhomogeneous  equation \eqref{Eq}.  Under appropriate regularity assumptions on the initial data, we then derive estimates that are uniform with respect to $\varepsilon$ and allow us to pass to the limit as $\varepsilon\to0$. In particular, up to the extraction of a subsequence, $f_{\varepsilon}$ converges to a $BV$ function $f$, and we show that this limit is a solution to   \eqref{DEq0} with $\kappa=-1$.

Since \eqref{DEq0} is degenerate and contains a nonlinear transport term, uniqueness cannot in general be expected at the level of weak solutions. Vol’pert and Hudjaev \cite{VH, Khu} introduced a notion of entropy solutions in  $BV$  for degenerate parabolic equations with bounded coefficients, and established existence and uniqueness of such solutions in one dimension under some regularity assumptions
on the initial data.
Motivated by  this work,   we introduce a class of \emph{entropy solutions} (see Definition \ref{def:gen.sol} below) adapted to the kinetic structure of \eqref{DEq0}.    Entropy solutions  are required to satisfy a suitable integral inequality which implies that such solutions satisfy  \eqref{DEq0} with $\kappa=-1$ in the sense of distributions and  equality of measures. We prove that this class enjoys a uniqueness property and that every limit obtained from the vanishing-viscosity approximation belongs to this class. Consequently, we obtain the existence and  uniqueness of entropy solutions  to \eqref{DEq0} with $\kappa=-1$.

 The organization of the paper is as follows. In Section 2, we introduce the notion of entropy solutions and state the main results. In Section 3, we show that entropy solutions are unique by proving that  $L^1-$difference  of two solutions is contracting as time progresses. Section 4 is devoted to the analysis of the uniformly parabolic equation \eqref{Eq}.We prove existence and uniqueness, regularity, and  some properties of  solutions. In Section 5, we obtain estimates for the solutions of \eqref{Eq} that are uniform with respect to \(\varepsilon\). We then use these estimates to pass to the limit as \(\varepsilon\to0\) and prove the main existence result.

\section{Main results}
We rewrite \eqref{DEq0} for the fermionic case $\kappa=-1$
\begin{equation}\label{DEq}
\begin{cases}
\partial_t f+p\cdot \nabla_x (f- f^2)
=\operatorname{div}_{p}\big(\nabla_p f+p(f- f^2)\big), \, \, \, \, \, \,x,p\in \mathbb{R}^d,\,t>0,\\
f(0,x,p)=f_0(x,p).
\end{cases}
\end{equation}
 We introduce the notion of entropy solutions:
\begin{definition}\label{def:gen.sol} \textit{Let $f_0\in L^1(\mathbb{R}^{2d})$ and  $0\leq f_0(x,p)\leq 1$ for almost every $(x,p)\in \mathbb{R}^{2d}$. 
A measurable  function $f: (0, \infty)\times \mathbb{R}^{2d}\to \mathbb{R}$   is called an \emph{entropy solution} of  \eqref{DEq}   with the initial data $f_0$  if: 
\begin{itemize}
\item $0\leq f(t,x,p)\leq 1$ for almost every $(t,x,p)\in (0,\infty)\times \mathbb{R}^{2d};$
\item $f\in L^1((0,T)\times \mathbb{R}^{2d})$, $f\in BV((0,T)\times \mathbb{R}^{2d})$,  and $\nabla_p f\in [L^2((0,T)\times \mathbb{R}^{2d})]^d$ for all $T>0$;
\item For any smooth compactly supported $\phi\in C_c^{\infty}((0, \infty)\times \mathbb{R}^{2d}) $ with $\phi\geq 0$ and any constant $a\in \mathbb{R}$
\begin{align}\label{def}
\int_0^{\infty}\int_{\mathbb{R}^{2d}}\mathrm{sign}(f-a)\big\{(f-a)\partial_t \phi &+(f-f^2-(a-a^2))(p\cdot \nabla_x \phi-p\cdot \nabla_p \phi)\nonumber \\
&-\nabla_p f\cdot\nabla_p \phi+d(a-a^2)\phi \big\}dxdpdt\geq 0;
\end{align}
\item There is a set $\mathcal{E}  $  of zero Lebesgue  measure on $(0,1)$ such that 
$$ \lim_{t \to 0,\, t\in (0,1) \setminus \mathcal{E}} \int_{\mathbb{R}^{2d}}|f(t,x,p)-f_0(x,p)|dxdp=0.$$ 
\end{itemize}
}\end{definition}
 Equation \eqref{def} holds for all $a\in \mathbb{R}$. The condition $0\leq f\leq 1$ provides that  the sign of $f-a$ is constant for $a<0$ and $a>1$. This fact and  the  nonnegativity of the left hand side of \eqref{def}  imply 
\begin{equation*}
\int_0^{\infty}\int_{\mathbb{R}^{2d}} \big\{f\partial_t \phi +(f-f^2)(p\cdot \nabla_x \phi-p\cdot \nabla_p \phi) -\nabla_p f\cdot\nabla_p \phi \big\}dxdpdt=0
\end{equation*}
for any smooth compactly supported $\phi\in C_c^{\infty}((0, \infty)\times \mathbb{R}^{2d}). $ Hence, an entropy solution satisfies \eqref{DEq} in the sense of distributions. Moreover, it follows from this equation that $\Delta_p f$ is a locally finite Radon measure, since  $\partial_t f$, $\partial_{x_i}f$, and $\partial_{p_i} f$ are locally finite Radon measures for all $i\in \{1,...,d\}$. Consequently, all terms in \eqref{DEq} define locally finite Radon measures, and \eqref{DEq} holds as an equality of locally finite Radon measures.

It is not difficult to check that every sufficiently smooth solution of \eqref{DEq} satisfying $0\leq f\leq 1$ also satisfies the entropy inequality \eqref{def}.

Our first main result is the $L^1$-contraction and uniqueness of entropy solutions.
\begin{theorem}[\bf{$L^1$-contraction and uniqueness}]\label{th:unique}
Let $d\in \mathbb{N}.$ Let $f$ and $g$ be two entropy solutions to \eqref{DEq} with the respective initial data $f_0,g_0\in L^1(\mathbb{R}^{2d})$ satisfying  $0\leq f_0(x,p),g_0(x,p)\leq 1$ for almost every $(x,p)\in \mathbb{R}^{2d}$. Then \begin{align*}
\int_{\mathbb{R}^{2d}}|f(t,x,p)-g(t,x,p)| dxdp\leq  \int_{\mathbb{R}^{2d}}|f_0(x,p)-g_0(x,p)| dxdp
\end{align*} 
for almost every $t\in (0,\infty).$
In particular, for any initial data $f_0$ as above, there is at most one entropy solution $f$. 
\end{theorem}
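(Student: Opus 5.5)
We adapt Kruzhkov's doubling of variables to the present kinetic, degenerate parabolic setting, in the spirit of Vol'pert–Hudjaev and Carrillo. Write $A(u)=u-u^2$. Take the entropy inequality \eqref{def} for $f=f(t,x,p)$ with constant $a=g(s,y,q)$, and the one for $g=g(s,y,q)$ with $a=f(t,x,p)$. Use the test function
\[
\Phi(t,x,p,s,y,q)=\psi\!\left(\tfrac{t+s}{2},\tfrac{x+y}{2},\tfrac{p+q}{2}\right)\rho_h(t-s)\,\rho_h(x-y)\,\rho_h(p-q),
\]
with $\rho_h$ standard mollifiers, integrate in the other variables, and add the two inequalities. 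In the sum, the derivatives of the mollifiers cancel in the time and $x$-transport terms, up to the mismatch $(p-q)\cdot\nabla_x\rho_h(x-y)$, which comes from the $p$-dependent velocity. This commutator term is $\mathrm{sign}(f-g)(A(f)-A(g))(p-q)\cdot\nabla\rho_h(x-y)\,\rho_h(p-q)$. Since $|p-q|\le h$ on the support while $|\nabla\rho_h|\sim h^{-1}$, it is only $O(1)$ and does not vanish trivially. I would control it using the $BV$ regularity in $x$, writing it as $|A(f)-A(g)|$ integrated against a divergence, integrating by parts onto the $x$-variation, and gaining a factor $h$. The same device handles the $-p\cdot\nabla_p$ drift term, which has the analogous commutator $(x\leftrightarrow p)$. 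The zero-order terms $d(a-a^2)\phi$ combine to $d\,\mathrm{sign}(f-g)(A(g)-A(f))\Phi$. Together with the limit of the drift term, $-\mathrm{sign}(f-g)(A(f)-A(g))\,p\cdot\nabla_p\psi$, this formally yields $\int |A(f)-A(g)|\,\mathrm{div}_p(p\psi)-d\,|A(f)-A(g)|\psi$, i.e.\ a term $-\int |A(f)-A(g)|\,p\cdot\nabla_p\psi$.

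The diffusion is the main obstacle. The two terms $-\mathrm{sign}(f-a)\nabla_pf\cdot\nabla_p\Phi$ produce, through the mollifier derivatives, the cross terms $\mathrm{sign}(f-g)(\nabla_pf+\nabla_q g)\cdot\nabla\rho_h(p-q)$. In Kruzhkov's argument for the parabolic case one needs $\liminf\ge$ the expected term $-\int|f-g|\Delta_p\psi$ up to a nonpositive dissipation $-|\nabla_p(f-g)|^2\delta(f=g)$. I would obtain this by first refining the entropy inequality: use the equality of measures stated after Definition~\ref{def:gen.sol}, namely that \eqref{DEq} holds with $\Delta_pf$ a Radon measure and $\nabla_pf\in L^2$. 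This lets me upgrade \eqref{def}, via a Carrillo-type argument, to an inequality with smooth convex entropies $\eta_\delta(f-a)$ (approximating $|\cdot|$) that keeps the dissipation term $\eta_\delta''(f-a)|\nabla_pf|^2\phi$. The cross terms $\eta_\delta''(f-g)\nabla_pf\cdot\nabla_qg$ are then absorbed by $\frac12(|\nabla_pf|^2+|\nabla_qg|^2)$, completing the square. Only after this do I let $h\to0$, then $\delta\to0$. The $BV$ chain-rule issues at jump sets, for the nonlinear $A$, will be handled using Vol'pert's calculus, which is where the $BV$ assumption is essential.

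After the limit we obtain, for $0\le\psi\in C_c^\infty((0,\infty)\times\mathbb R^{2d})$,
\[
\int\!\!\int |f-g|\,\partial_t\psi+|A(f)-A(g)|\,(p\cdot\nabla_x\psi-p\cdot\nabla_p\psi)+|f-g|\,\Delta_p\psi\;\ge0 .
\]
Now choose $\psi(t,x,p)=\chi(t)\,\omega_R(x,p)$ with cutoff $\omega_R(x,p)=\omega(x/R^2,p/R)$. The transport contributions are bounded by $\|p\|\,\|\nabla\omega_R\|\lesssim R^{-1}$ on the support, using $|A(f)-A(g)|\le|f-g|$ because $|1-f-g|\le1$. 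The other contributions are $O(R^{-2})$ times $\|f-g\|_{L^1}$, which is finite since $f,g\in L^1$. Letting $R\to\infty$ gives $\int\chi'(t)\|f(t)-g(t)\|_{L^1}\,dt\ge0$ for $\chi\ge0$. Hence $t\mapsto\|f(t)-g(t)\|_{L^1}$ is a.e.\ nonincreasing. Choosing $\chi$ to approximate $1_{[t_1,t_2]}$ and using the initial-trace condition outside the null set $\mathcal E$ then gives the contraction. Taking $f_0=g_0$ yields uniqueness.
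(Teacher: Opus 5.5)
Your strategy is sound in outline, but it takes a genuinely different route from the paper. The paper never doubles variables. From $\nabla_p f\in L^2$ it gets $\nu_p=0$ on $J_f$ (see \eqref{nu_p}). Via Vol'pert--Hudjaev, it then extracts from \eqref{def} the Rankine--Hugoniot relation \eqref{+=-} and the jump entropy inequality \eqref{+<-}. Finally it evaluates the Kato expression for the single function $z=f-g$ directly, as a pairing of $\mathrm{sign}(z)$ with the measure-valued equation for $z$:
\begin{itemize}
\item the jump part on $J_z$ gets the right sign by combining \eqref{+=-} for $f$ and $g$ with \eqref{+<-} at $a$ equal to the trace of $g$;
\item the diffusion part is $\lim_{\tau\to0}\int \mathrm{sign}_{\tau}'(z)\phi|\nabla_p z|^2\ge 0$.
\end{itemize}
Since $z$ lives on one copy of phase space, no cross terms $\nabla_p f\cdot\nabla_q g$ ever appear. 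What you call the main obstacle is therefore an artifact of doubling.

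Your Carrillo-type upgrade of \eqref{def}, keeping $-\eta_\delta''(f-a)|\nabla_p f|^2\phi$, is asserted rather than proved, and it is where all the $BV$ work sits. You would have to test the measure equation with $\eta_\delta'(f-a)\phi$. That needs Vol'pert's chain rule, the Rankine--Hugoniot and convex-entropy jump conditions on $J_f$, and control of the measure $\Delta_p f$ on $J_f$ (where $\nu_p=0$ enters). This is the paper's machinery, and once you have it, applying it directly to $f-g$ is shorter. What doubling buys is robustness: the interaction between $f$ and $g$ passes only through $L^1_{\mathrm{loc}}$ continuity of translations. Your argument would therefore survive without $BV$ if the dissipation-refined inequality were built into the definition, whereas the paper's proof is tied to $BV$ traces.

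Several pieces of your bookkeeping need fixing, though none is fatal.

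\textbf{Flux term.} $|A(f)-A(g)|$ should everywhere be $\mathrm{sign}(f-g)(A(f)-A(g))=|f-g|(1-f-g)$, because $A$ is not monotone on $[0,1]$. This is harmless at the end, where you only use the bound by $|f-g|$.

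\textbf{The $p$-drift commutator does not vanish.} Its kernel $(p-q)\cdot\nabla\rho_h(p-q)$ is $L^1$-bounded with integral $-d$. By Kruzhkov's lemma it therefore converges to $+d\int \mathrm{sign}(f-g)(A(f)-A(g))\psi$, which exactly cancels the zero-order terms you computed. If it were $O(h)$, as ``the same device'' suggests, the term $-d\int\mathrm{sign}(f-g)(A(f)-A(g))\psi$ would survive, and you would only get $\|f(t)-g(t)\|_{L^1}\le e^{dt}\|f_0-g_0\|_{L^1}$. Your intermediate formula flips sign, although the final inequality you write is right. By contrast, the $x$-commutator does vanish, and needs no $BV$: its kernel $(p-q)\rho_h(p-q)\cdot\nabla\rho_h(x-y)$ is $L^1$-bounded with zero integral.

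\textbf{Order of limits.} With smoothed entropies, $q_\delta(f,g)$ and $q_\delta(g,f)$ differ by $O(\delta)$, which against $\nabla\rho_h$ gives $O(\delta/h)$. So after discarding $-\eta_\delta''(f-g)|\nabla_p f-\nabla_q g|^2\Phi\le0$, send $\delta\to0$ at fixed $h$ and only then $h\to0$, not the reverse.

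\textbf{Cutoff.} $|p|\,|\nabla_p\omega_R|$ is $O(1)$, not $O(R^{-1})$. It still vanishes by dominated convergence, since it is supported where $|x|\gtrsim R^2$ or $|p|\gtrsim R$ and $f-g\in L^1((0,T)\times\mathbb{R}^{2d})$.
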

We define a nonlinear differential operator 
$$\mathcal{L}[f]\colonequals -p\cdot \nabla_x (f- f^2)+ \operatorname{div}_{p}\big(\nabla_p f+p(f- f^2)\big)$$
so that \eqref{DEq} can be written as $\partial_tf=\mathcal{L}[f].$
For $k\geq 1$ we also define a  space
$$L_{k}^1(\mathbb{R}^{2d}) \colonequals \Big\{g\in L^1(\mathbb{R}^{2d}):\, \int_{\mathbb{R}^{2d}}|p|^{k}|g(x,p)|dxdp<\infty \Big\}$$
and its norm
$$||g||_{L_{k}^1(\mathbb{R}^{2d})}\colonequals \int_{\mathbb{R}^{2d}}(1+|p|^{k})|g(x,p)|dxdp.$$
For $C\in \mathbb{R}$ we use  $\lceil  C \rceil$ to denote the smallest integer  larger or equal to $C$.

Our second main result is the global existence of entropy solutions to \eqref{DEq}.
\begin{theorem}[\bf{Global existence}]\label{th:global existence}
Let $d\in \mathbb{N},$  $k>2d$,  $f_0\in L^1_{k}(\mathbb{R}^{2d})\cap W^{1,1}(\mathbb{R}^{2d}) $,   $0\leq f_0(x,p)\leq 1$ for almost every $(x,p)\in \mathbb{R}^{2d},$ $\mathcal{L}[f_0]\in L^1(\mathbb{R}^{2d})$, and $\Delta_x f_0\in L^1(\mathbb{R}^{2d})$. Then there is a unique entropy solution $f: (0, \infty)\times \mathbb{R}^{2d}\to \mathbb{R}$   as in Definition \ref{def:gen.sol}. We have  
 $$  \int_{\mathbb{R}^{2d}}f(t,x,p)dpdx=\int_{\mathbb{R}^{2d}}f_0(x,p)dpdx$$
 for almost every $t\in (0,\infty)$.     For all $ \gamma\in [1, k]$, the $\gamma$-moment of $f$ with respect to $p$, i.e.,  $\int_{\mathbb{R}^{2d}}|p|^{\gamma}f(t,x,p)dxdp,$ is bounded by a positive polynomial $P_{\lceil \gamma/2 \rceil}(t)$   (whose coefficients depend only on  $\int_{\mathbb{R}^{2d}}f_0(x,p)dxdp,$ $\int_{\mathbb{R}^{2d}}|p|^{\gamma}f_0(x,p)dxdp,$ $\gamma$, and $d$) of degree $\lceil \gamma/2 \rceil$ for almost every $t\in (0,\infty)$. In particular, $f(t,\cdot,\cdot)\in L_k^1(\mathbb{R}^{2d})$ for almost every $t\in (0,\infty)$.
\end{theorem}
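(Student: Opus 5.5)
Uniqueness follows directly from Theorem \ref{th:unique}, so the task is existence together with mass conservation and the moment bounds. I would use the vanishing viscosity scheme \eqref{Eq}. First, for fixed $\varepsilon>0$, I would construct a global solution $f_\varepsilon$ with $0\le f_\varepsilon\le 1$, following \cite{Car}. The problem is now uniformly parabolic in $(x,p)$, so a mild formulation with the fundamental solution of the linear operator $\partial_t+p\cdot\nabla_x-\Delta_p-\operatorname{div}_p(p\,\cdot)-\varepsilon\Delta_x$ gives local existence by a fixed point argument. Global continuation then follows from the a priori bounds $0\le f_\varepsilon\le1$ and $L^1_k$ control. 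The bounds $0\le f_\varepsilon\le 1$ come from a comparison principle, since $0$ and $1$ are stationary solutions. Mass conservation holds because the equation is in divergence form. For the moments I would test with $|p|^\gamma$. The transport term vanishes after integration in $x$. The collision term gives $\frac{d}{dt}\int|p|^\gamma f_\varepsilon\le \gamma(\gamma+d-2)\int|p|^{\gamma-2}f_\varepsilon+\gamma\int|p|^\gamma f_\varepsilon^2$ minus a positive part. Using $f-f^2\ge0$, the drift contribution $-\gamma\int |p|^\gamma(f-f^2)$ is nonpositive. Hence $\frac{d}{dt}M_\gamma\le C M_{\gamma-2}$, and induction in steps of $2$ yields the polynomial $P_{\lceil\gamma/2\rceil}(t)$ uniformly in $\varepsilon$. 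For $\gamma<2$ I would interpolate with the mass.

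Next I would derive $\varepsilon$-uniform $BV$ estimates. Set $L_\varepsilon[f]=\mathcal L[f]+\varepsilon\Delta_x f$. Differentiating \eqref{Eq} in $t$, the function $w=\partial_t f_\varepsilon$ solves a linear equation of divergence/transport type with coefficient $(1-2f_\varepsilon)$. An $L^1$-contraction (Kato) argument then gives $\|\partial_t f_\varepsilon(t)\|_{L^1}\le \|L_\varepsilon[f_0]\|_{L^1}\le \|\mathcal L[f_0]\|_{L^1}+\|\Delta_x f_0\|_{L^1}$. This is exactly where the hypotheses on $\mathcal L[f_0]$ and $\Delta_x f_0$ enter. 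For spatial derivatives, $\partial_{x_i}$ commutes with the operator, so $\|\nabla_x f_\varepsilon(t)\|_{L^1}\le\|\nabla_x f_0\|_{L^1}$ by the same Kato argument.

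The $p$-derivatives are the main obstacle, since $\partial_{p_i}$ does not commute. It produces the extra terms $\partial_{x_i}(f-f^2)$ from transport and $\partial_{p_i}(f-f^2)$ from the drift. I would estimate $\frac{d}{dt}\|\partial_{p_i}f_\varepsilon\|_{L^1}\le C(\|\nabla_x f_\varepsilon\|_{L^1}+\|\nabla_pf_\varepsilon\|_{L^1})$, using $|1-2f|\le1$. Gronwall together with the already controlled $x$-gradient then gives exponential-in-time bounds. Since $f_0\in W^{1,1}$, this yields $BV$ bounds on $(0,T)\times\mathbb R^{2d}$. The $L^2$ bound on $\nabla_p f_\varepsilon$ follows from testing \eqref{Eq} with $f_\varepsilon$. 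Here the drift term $\int p(f-f^2)\cdot\nabla_p f$ is an exact divergence $\int p\cdot\nabla_p G(f)$ with $G(s)=s^2/2-s^3/3$, which is bounded by $d\int f$.

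Compactness of $BV$ in $L^1_{loc}$, combined with tightness from the moment bounds in $p$ and a uniform tail estimate in $x$ (obtained by testing with a weight in $x$), gives $f_\varepsilon\to f$ in $L^1((0,T)\times\mathbb R^{2d})$. We also get $\nabla_pf_\varepsilon\rightharpoonup\nabla_p f$ weakly in $L^2$. To verify \eqref{def}, I would multiply \eqref{Eq} by $\eta_\delta'(f_\varepsilon-a)\phi$, with $\eta_\delta$ a convex approximation of $|\cdot|$. The term $-\eta''_\delta|\nabla_pf_\varepsilon|^2\phi$ is dropped with the correct sign, as are the $\varepsilon$-terms involving $\nabla_x$. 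The remaining constant $d(a-a^2)\phi$ arises from $\operatorname{div}_p p=d$ acting on the constant $a$. Letting $\delta\to0$ and then $\varepsilon\to0$ is delicate in the product $\mathrm{sign}(f_\varepsilon-a)\nabla_p f_\varepsilon$. I would handle it by writing this product as $\nabla_p|f_\varepsilon-a|$, which converges weakly. The initial condition follows from the uniform bound $\|f_\varepsilon(t)-f_0\|_{L^1}\le t\,\|L_\varepsilon[f_0]\|_{L^1}$. Finally, mass and moments pass to the limit a.e. in $t$ by Fatou's lemma and strong convergence.
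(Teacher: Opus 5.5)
Your proposal is correct and follows the paper's proof essentially step by step. The steps coincide: global $\varepsilon$-solutions from the mild formulation, $0\le f_\varepsilon\le 1$ by comparison, $\varepsilon$-uniform moment bounds from testing with $|p|^\gamma$, $L^1$ bounds on $\partial_t f_\varepsilon$ and $\nabla_x f_\varepsilon$ by the Kato-type contraction argument for the differentiated equations, the bound on $\nabla_p f_\varepsilon$ by Gr\"onwall, the $L^2$ bound on $\nabla_p f_\varepsilon$, compactness, the entropy inequality at the $\varepsilon$-level, and the initial trace from $\|f_\varepsilon(t)-f_0\|_{L^1}\le t\,\|\mathcal{L}[f_0]+\varepsilon\Delta_x f_0\|_{L^1}$.

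The deviations are minor. Writing $\mathrm{sign}(f_\varepsilon-a)\nabla_p f_\varepsilon=\nabla_p|f_\varepsilon-a|$ and integrating by parts justifies that limit more carefully than the paper does. Your upgrade to global $L^1$ convergence via tightness is unnecessary, since the paper works with $L^1_{\mathrm{loc}}$ convergence and the cutoff $\zeta_\sigma$ for mass conservation. Moreover, no $x$-moment of $f_0$ is assumed, so that tightness step would need a slowly growing weight adapted to $f_0$ rather than $|x|$.
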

\begin{remark}
$(i)$ The condition $f_0\in L^1_{k}(\mathbb{R}^{2d})$ with $k>2d$ is needed to prove existence of global solutions; see Lemma \ref{lem:p-moment} and Theorem \ref{th:global} below.  We note that $ L_{k}^1(\mathbb{R}^{2d})\subset L_{\gamma}^1(\mathbb{R}^{2d}) $  for all $ \gamma \in [1, k].$\\
$(ii)$ The regularity assumptions $f_0\in  W^{1,1}(\mathbb{R}^{2d}) $, $\mathcal{L}[f_0]\in L^1(\mathbb{R}^{2d})$, and  $\Delta_x f_0\in L^1(\mathbb{R}^{2d})$ are needed to obtain   uniform bounds with respect to $\varepsilon\in (0,1]$ in $W^{1,1}((0, T)\times \mathbb{R}^{2d})$  (with $T>0$) for the solution of \eqref{Eq}, which imply compactness in $BV((0, T)\times \mathbb{R}^{2d});$ see Lemma \ref{l:epsilon} and the proof of Theorem \ref{th:global existence}. In view of $0\leq f_0\leq 1$, the condition $\mathcal{L}[f_0]\in L^1(\mathbb{R}^{2d})$ is satisfied if  $\Delta_p f_0$, $p\cdot \nabla_x f_0$, and $p\cdot \nabla_p f_0$ belong to $ L^1(\mathbb{R}^{2d})$.
\\
$(iii)$ The uniqueness of the solution in Theorem \ref{th:global existence}  follows directly from Theorem \ref{th:unique}.
\end{remark}
\section{Uniqueness of entropy solutions}
In this section we  show the uniqueness of entropy solutions, i.e., we prove Theorem \ref{th:unique}.   
\subsection{Some results concerning $BV$ functions}
Let $f$ be a $BV$ function in $(0,\infty)\times \mathbb{R}^{2d}$.
We say that $f$ has a \textit{Lebesgue point} at $(t,x,p)\in (0,\infty)\times \mathbb{R}^{2d}$
if there is $\tilde{f}\in \mathbb{R}$ such that 
 \begin{align*}
 \lim_{\rho \to 0}\intavg_{B_{\rho}(t,x,p)}|f(s,y,q)-\tilde{f}|dydqds=0,
 \end{align*}
where 
 $
 B_{\rho}(t,x,p)\colonequals \big\{(s,y,q)\in (0,\infty)\times \mathbb{R}^{2d}:\,|t-s|^2+|x-y|^2+|p-q|^2<\rho^2 \big\}.
$  
 We denote by $S_f \subset (0,\infty)\times \mathbb{R}^{2d}$ the set where this condition fails and it is called the \emph{approximate discontinuity set}; see \cite[Definition 3.63]{Amb}. We note that $S_f$ is a Borel set of zero Lebesgue measure and $\tilde{f}:(0,\infty)\times \mathbb{R}^{2d}\setminus S_f\to \mathbb{R}$ is Borel measurable coinciding with $f$ almost everywhere in $(0,\infty)\times \mathbb{R}^{2d}\setminus S_f$; see \cite[Proposition 3.64]{Amb}. 
 
 We say $(t,x,p)\in (0,\infty)\times \mathbb{R}^{2d}$ is an \emph{approximate jump point of $f$}      (see \cite[Definition 3.67]{Amb}) if there exists $f^+, f^{-}\in \mathbb{R},$ and a unit vector $\nu_f=(\nu_t,
 \nu_x,\nu_p)\in \mathbb{S}^{2d}$ (with $\nu_t\in \mathbb{R},$ $\nu_x \in \mathbb{R}^d$, $\nu_p\in \mathbb{R}^d$)  such that $f^{+}\neq f^{-}$ and 
 \begin{align*}
 \lim_{\rho \to 0}\intavg_{B_{\rho}^{+}(t,x,p;\nu)}|f(s,y,q)-f^{+}|dydqds=0,\,\,\,\,\,\lim_{\rho \to 0}\intavg_{B_{\rho}^{-}(t,x,p;\nu)}|f(s,y,q)-f^{-}|dydqds=0,
 \end{align*}
 where 
 \begin{align*}
 B_{\rho}^{+}(t,x,p;\nu)\colonequals \big\{(s,y,q)\in (0,\infty)\times \mathbb{R}^{2d}:&\,|t-s|^2+|x-y|^2+|p-q|^2<\rho^2,\\ & (s-t)\nu_t+(y-x)\cdot \nu_x+(q-p)\cdot \nu_p>0 \big\}
 \end{align*}
 and 
 \begin{align*}
 B_{\rho}^{-}(t,x,p;\nu)\colonequals \big\{(s,y,q)\in (0,\infty)\times \mathbb{R}^{2d}: &\,|t-s|^2+|x-y|^2+|p-q|^2<\rho^2,\\
 & (s-t)\nu_t+(y-x)\cdot \nu_x+(q-p)\cdot \nu_p<0 \big\}.
  \end{align*}
The set of approximate jump points is denoted by $J_f\subset (0,\infty)\times \mathbb{R}^{2d}$. It is obvious that $J_f\subset S_f$. 
 The triple $$(f^+(t,x,p), f^-(t,x,p), \nu_f(t,x,p))$$ is uniquely defined up to a permutations of $(f^+(t,x,p), f^-(t,x,p))$ and a change of sign of $\nu_f(t,x,p)$.  The set $J_f$ is a Borel set of zero Lebesgue measure and   
$$(f^+(t,x,p), f^-(t,x,p), \nu_f(t,x,p)): J_f\to \mathbb{R}\times \mathbb{R}\times \mathbb{S}^{2d}$$
 is a Borel map; see \cite[Proposition 3.69]{Amb}. 
 It is known that $\mathcal{H}^{2d}(S_f\setminus J_f)=0$ (see \cite[Theorem 3.78]{Amb}),  where $\mathcal{H}^{2d}$ is the Hausdorff measure. 
One can extend $f^{+}$ and $f^{-}$ to $(0,\infty)\times \mathbb{R}^{2d}\setminus S_f $ by setting $$f^+(t,x,p)=f^{-}(t,x,p)\colonequals \tilde{f}(t,x,p)\,\,\,\,\text{for}\,\,\,\,\, (t,x,p)\in (0,\infty)\times \mathbb{R}^{2d}\setminus S_f .$$ 
With this extension, the function $$\bar{f}(t,x,p)\colonequals \frac{f^+(t,x,p)+f^-(t,x,p)}{2} $$ is defined for $\mathcal{H}^{2d}$-almost everywhere  on $(0,\infty)\times \mathbb{R}^{2d}$. This function is called the \emph{mean value} of $f$.

\subsection{Jump points and $L^1$-contraction}
Let $f$ be an entropy solution as in Definition \ref{def:gen.sol}.
Since $\nabla_p f$ is in $ [L^2((0,T)\times \mathbb{R}^{2d})]^d$ for all $T>0$ and the Lebesgue measure of $J_f$ is zero, $\int_E\nabla_p f(t,x,p)dxdpdt$  is equal to zero for each Borel set $E\subset J_f\cap (0,T)$. By \cite[Theorem 3.78]{Amb} we have 
$$0=\int_E\nabla_p f(t,x,p)dxdpdt=-\int_E(f^+(t,x,p)-f^-(t,x,p))\nu_p(t,x,p)d\mathcal{H}^{2d},$$
where  $\nu_p(t,x,p)$ is the $p$-component of the normal $\nu_f$. Since $E\subset J_f\cap (0,T)$ and  $T>0$ are arbitrary and $f^{+}\neq f^{-}$, we get
\begin{align}\label{nu_p}
\nu_p(t,x,p)=0,\,\,\,\,\, \mathcal{H}^{2d}\text{-almost everywhere  on}\,\,\,J_f.
\end{align} 
This equation, \eqref{def}, and \cite[Theorem 1.3]{Khu} imply that  for any constant $a\in \mathbb{R}$
\begin{align*}
\Big[(\mathrm{sign}(&f^{+}(t,x,p)-a)-\mathrm{sign}(f^{-}(t,x,p)-a)\Big]\Big[\Big(\frac{f^{+}(t,x,p)+f^{-}(t,x,p)}{2}-a\Big)\nu_t(t,x,p)\\
&+\Big(\frac{f^{+}(t,x,p)-(f^+(t,x,p))^2+f^{-}(t,x,p)-(f^{-}(t,x,p))^2}{2}-(a-a^2)\Big)p\cdot \nu_x(t,x,p) \Big]\leq 0
\end{align*}
holds for $\mathcal{H}^{2d}$-almost everywhere  on $J_f$.  
The bound $0\leq f \leq 1$ implies $0\leq f^+,f^-\leq 1 .$ Hence, for  $a<0$ and $a>1$, the signs of $f^{+}(t,x,p)-a$ and $f^{-}(t,x,p)-a$ are constant. Using this fact and the above inequality, we get
\begin{align}\label{+=-}
(f^{+}(t,x,p)&-f^{-}(t,x,p))\nu_t(t,x,p)\nonumber\\
&+(f^{+}(t,x,p)-f^{-}(t,x,p))(1-f^{+}(t,x,p)-f^{-}(t,x,p)) p\cdot \nu_x(t,x,p)=0
\end{align}
for $\mathcal{H}^{2d}$-almost everywhere  on $J_f$. 
 Combining the above two equations, we get  
\begin{align}\label{+<-}
|f^{+}&(t,x,p)-a|\nu_t(t,x,p)+|f^{+}(t,x,p)-a|(1-f^{+}(t,x,p)-a)p\cdot \nu_x(t,x,p)\nonumber\\
&\leq |f^{-}(t,x,p)-a|\nu_t(t,x,p)+|f^{-}(t,x,p)-a|(1-f^{-}(t,x,p)-a)p\cdot \nu_x(t,x,p)
\end{align}
for $\mathcal{H}^{2d}$-almost everywhere  on $J_f$.

\begin{lemma} Let $f$ and $g$ be two entropy solutions to \eqref{DEq} as in Definition \ref{def:gen.sol}. Then 
\begin{align}\label{I>0}
 \int_0^{\infty}\int_{\mathbb{R}^{2d}}\mathrm{sign}(f-g)\big\{(f-g)\partial_t \phi &+(f-f^2-(g-g^2))(p\cdot \nabla_x \phi-p\cdot \nabla_p \phi)\nonumber \\
&-\nabla_p (f-g)\cdot\nabla_p \phi \big\}dxdpdt\geq 0
\end{align}
for all $\phi \in  C_c^{\infty}((0, \infty)\times \mathbb{R}^{2d})$ with $\phi\geq 0.$
\end{lemma}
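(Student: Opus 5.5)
Our plan is to use Kruzhkov's doubling of variables, adapted to the BV setting in the spirit of Vol'pert--Hudjaev \cite{VH, Khu}. We write $f=f(t,x,p)$ and $g=g(s,y,q)$ and take a test function
$$\psi(t,x,p,s,y,q)=\phi\Big(\tfrac{t+s}{2},\tfrac{x+y}{2},\tfrac{p+q}{2}\Big)\,\delta_h(t-s)\,\delta_h(x-y)\,\delta_h(p-q),$$
where $\delta_h$ denotes standard mollifiers and $\phi\ge 0$.

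In the entropy inequality \eqref{def} for $f$ we take $a=g(s,y,q)$ and integrate in $(s,y,q)$. Symmetrically, in the inequality for $g$ we take $a=f(t,x,p)$ and integrate in $(t,x,p)$. Adding the two gives an inequality on the doubled space. The time derivative and the $x$- and $p$-gradient terms acting on the mollifier factors are combined as $(\partial_t+\partial_s)\psi$, $(\nabla_x+\nabla_y)\psi$ and $(\nabla_p+\nabla_q)\psi$; for these combinations the $\delta_h$ factors differentiate away, leaving derivatives of $\phi$. The remaining terms are the errors to be controlled as $h\to0$:
\begin{itemize}
\item the mismatch $p$ versus $q$ in the transport coefficients $p\cdot\nabla_x$ and $p\cdot\nabla_p$, which contributes $(p-q)\cdot\nabla\delta_h$, a quantity that is $O(1)$ but localized;
\item the zeroth-order terms $d(a-a^2)\phi$, which cancel at leading order with the divergence of the $p$-drift;
\item the diffusion terms.
\end{itemize}

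For the diffusion we follow Carrillo's treatment of degenerate parabolic equations. The cross terms $-\mathrm{sign}(f-g)\nabla_p f\cdot\nabla_q\psi$, and the analogous term for $g$, are rewritten via $\nabla_p\psi=(\nabla_p+\nabla_q)\psi-\nabla_q\psi$. We then integrate by parts in $q$ inside $\mathrm{sign}(f-g(q))$. Because $\nabla_p f,\nabla_q g\in L^2$ and $\partial_q\mathrm{sign}(f-g)$ is a nonnegative measure concentrated on $\{f=g\}$ times $|\nabla g|^2$-type quantities, the combined dissipation $-\mathrm{sign}'(f-g)|\nabla_p f-\nabla_q g|^2\psi$ has a good sign and can be dropped. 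Rigorously, we replace $\mathrm{sign}$ by a smooth $\mathrm{sign}_\eta$, use the chain rule for $W^{1,2}$ in $p$, and let $\eta\to0$ after $h\to0$. This leaves exactly $-\mathrm{sign}(f-g)\nabla_p(f-g)\cdot\nabla_p\phi$.

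Passing $h\to0$ is the main obstacle. Since $f$ and $g$ are only BV in $(t,x)$, the functions $\mathrm{sign}(f(t,x,p)-g(s,y,q))$ do not converge strongly near jump sets. We would pass to the limit using Vol'pert's calculus: the doubled integrals converge to integrals with respect to Lebesgue measure plus contributions on $J_f\cup J_g$ evaluated at the traces $f^\pm,g^\pm$. Here \eqref{nu_p} says jumps have no $p$-normal, so the diffusion produces no singular part. The jump contributions, namely the terms arising from $(p-q)\cdot\nabla\delta_h$ and $(t-s)$-differences against the measures $Df$ and $Dg$, are then controlled by the Rankine--Hugoniot relation \eqref{+=-} and the entropy jump inequality \eqref{+<-} applied with $a=g^\pm$ (respectively $a=f^\pm$). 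These show the singular contributions have the favorable sign; this is precisely the approach of \cite[Theorem 1.3]{Khu}.

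On the remaining set, where both functions are approximately continuous, dominated convergence gives the integrand of \eqref{I>0}. The first-order error terms involving $(p-q)\nabla\delta_h$ vanish there because $|p-q|\lesssim h$ while $f-f^2-(g-g^2)$ is continuous at Lebesgue points, leaving the term $-p\cdot\nabla_p\phi$ after cancelling against $d(\cdot)\phi$. This yields \eqref{I>0}.
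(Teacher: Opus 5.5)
\textbf{The gap is in the diffusion step.} When you integrate the two cross terms by parts with $\mathrm{sign}_\eta$, you get $+2\,\mathrm{sign}_\eta'(f-g)\,\nabla_p f\cdot\nabla_q g\,\psi$ on the left of the $\geq 0$ inequality. Note that $\nabla_q\,\mathrm{sign}_\eta(f-g(q))=-\mathrm{sign}_\eta'(f-g)\nabla_q g$ is a vector, not a nonnegative measure. This term has no sign. It also does not disappear as $h\to0$, $\eta\to0$: it concentrates on $\{f=g\}$ like a local time and is generally nonzero where $f-g$ changes sign.

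Turning it into $-\mathrm{sign}_\eta'|\nabla_pf-\nabla_qg|^2\psi$ requires the diagonal dissipations $-\mathrm{sign}_\eta'(f-a)|\nabla_pf|^2\phi$ and $-\mathrm{sign}_\eta'(g-a)|\nabla_qg|^2\phi$ to appear in the entropy inequalities for $f$ and $g$. Inequality \eqref{def} has discarded them, and averaging \eqref{def} in $a$ to reach smooth entropies $|\cdot-a|_\eta$ does not bring them back. Recovering them is Carrillo's key lemma, and here it is not a ``$W^{1,2}$ chain rule in $p$''. You must test the measure equation with $\mathrm{sign}_\eta(f-a)\phi$. Since $\partial_t f+p\cdot\nabla_x(f-f^2)$ is only a measure, that pairing differs from $-\int |f-a|_\eta\partial_t\phi-\dots$ by a jump integral over $J_f$. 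That integral must be signed using \eqref{nu_p}, \eqref{+=-} and \eqref{+<-}, and your proposal never does this.

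Conversely, you put the BV jump analysis where it is not needed. In the doubled inequality the first-order terms involve only the Lipschitz functions $|f-g|$ and $|f-g|(1-f-g)$ of the pair, so Kruzhkov's lemma on continuity of translations in $L^1_{\mathrm{loc}}$ passes them to the limit directly. The $(p-q)\cdot\nabla\delta_h(x-y)$ term vanishes, and $-(p-q)\cdot\nabla\delta_h(p-q)$ produces $d\,|f-g|(1-f-g)\phi$, which cancels the zeroth-order terms.

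For comparison, the paper does not double variables at all. Since $f,g\in BV$ satisfy \eqref{DEq} as an equality of measures, it writes the left side of \eqref{I>0} as $\mathrm{sign}(z)$, with $z=f-g$, tested against the equation for $z$. The first-order part is reduced by Vol'pert's chain rule \cite[Theorem 1.1]{Khu} to an integral over $J_z$ whose integrand is nonpositive by \eqref{+=-} and \eqref{+<-}. The diffusion part equals $\lim_{\tau\to0}\int \mathrm{sign}_\tau'(z)\phi|\nabla_p z|^2\geq 0$, so only the dissipation of $z$ itself is needed. Once you have the BV machinery required to repair your diffusion step, this direct argument already yields \eqref{I>0}, and the doubling becomes superfluous.
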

\begin{proof} We denote the left hand side of \eqref{I>0} by $I(f,g,\phi).$ 
 Since \eqref{DEq} holds in the sense of equality of measures for $f$ and $g$, we have 
\begin{align}\label{new I}
I(f,g,\phi)=\int_0^{\infty}\int_{\mathbb{R}^{2d}}\mathrm{sign}(z)\big\{\partial_t (\phi z) &+p\cdot \nabla_x(\phi z(1-f-g))\nonumber\\ &- \mathrm{div}_p(p\phi z(1-f-g))
-\mathrm{div}_p(\phi \nabla_p z) \big\}dxdpdt,
\end{align}
where  $z\colonequals f-g$.
We use \eqref{nu_p} and \cite[Theorem 1.1]{Khu} to integrate by parts 
\begin{align*}
&I_1(f,g,\phi)\colonequals\int_0^{\infty}\int_{\mathbb{R}^{2d}}\mathrm{sign}(z)\big\{\partial_t (\phi z) +p\cdot \nabla_x(\phi z(1-f-g))- \mathrm{div}_p(p\phi z(1-f-g))\big\}dxdpdt\nonumber \\
&=-\int_{J_{z}}\phi(\mathrm{sign}(z^+)-\mathrm{sign}(z^-))\Big(\frac{z^++z^-}{2}\nu_t+\frac{ z^+(1-f^+-g^+)+ z^-(1-f^--g^-)}{2}p\cdot \nu_x\Big)d\mathcal{H}^{2d}.
\end{align*}
By writing and combining \eqref{+=-} for $f$ and $g$, we get 
\begin{align*}
z^+\nu_t+z^+(1-f^+-g^+)p\cdot \nu_x& =z^-\nu_t+z^-(1-f^--g^-)p\cdot \nu_x\\&=\frac{z^++z^-}{2}\nu_t+\frac{ z^+(1-f^+-g^+)+ z^-(1-f^--g^-)}{2}p\cdot \nu_x.
\end{align*}
We choose $a=g$ in \eqref{+<-} and get 
\begin{align*}
\mathrm{sign}(z^+)(z^+\nu_t+z^+(1-f^+-g^+)p\cdot \nu_x)\leq \mathrm{sign}(z^-)(z^-\nu_t+z^-(1-f^--g^-)p\cdot \nu_x).
\end{align*}
The last two equations imply
\begin{align*}
&(\mathrm{sign}(z^+)-\mathrm{sign}(z^-))\Big(\frac{z^++z^-}{2}\nu_t+\frac{ z^+(1-f^+-g^+)+ z^-(1-f^--g^-)}{2}p\cdot \nu_x\Big)\\
&=\mathrm{sign}(z^+)(z^+\nu_t+z^+(1-f^+-g^+)p\cdot \nu_x)-\mathrm{sign}(z^-)(z^-\nu_t+z^-(1-f^--g^-)p\cdot \nu_x\leq 0
\end{align*}
and so $I_1(f,g,\phi)\geq 0. $ 

The last term of  \eqref{new I} can be written as
$$I_2(f,g,\phi)\colonequals -\int_0^{\infty}\int_{\mathbb{R}^{2d}}\mathrm{sign}(z)  \mathrm{div}_p(\phi \nabla_p z) dxdpdt=-\lim_{\tau\to 0}\int_0^{\infty}\int_{\mathbb{R}^{2d}}\mathrm{sign}_{\tau}(z)  \mathrm{div}_p(\phi \nabla_p z) dxdpdt,$$ 
where $\mathrm{sign}_{\tau}(z) $ is the function defined in \eqref{sign} below. Integration by parts and $\mathrm{sign}_{\tau}'(z)\geq 0$ provide $$-\int_{\mathbb{R}^{2d}}\mathrm{sign}_{\tau}(z)  \mathrm{div}_p(\phi \nabla_p z) dxdpdt=\int_{\mathbb{R}^{2d}}\mathrm{sign}_{\tau}'(z)  \phi |\nabla_p z|^2 dxdpdt\geq 0$$
and so  $I_2(f,g,\phi)\geq 0.$

\end{proof}

\begin{proof}[\textbf{Proof of Theorem \ref{th:unique}}]
Equation \eqref{I>0} can written as 
\begin{align*}
\int_0^{\infty}\int_{\mathbb{R}^{2d}}\big\{|f-g|\partial_t \phi &+|f-g|(1-f-g)(p\cdot \nabla_x \phi-p\cdot \nabla_p \phi)\nonumber \\
&-\nabla_p (|f-g|)\cdot\nabla_p \phi \big\}dxdpdt\geq 0.
\end{align*}
This means that the distribution
$$-\partial_t(|f-g|)+(p\cdot \nabla_p-p\cdot\nabla_x)(|f-g|(1-f-g))+\Delta_p(|f-g|)$$
is a nonnegative  measure.  The first two terms are locally finite Radon measures. Hence, the last term is also a locally finite Radon measure.  Let $\phi_R(x,p)$ be a smooth nonnegative function such that $\phi_R(x,p)=1$ if $\sqrt{|x|^2+|p|^2}\leq R$ and $\phi_R(x,p)=0$ if $\sqrt{|x|^2+|p|^2}\geq R+1$.   We use the nonnegativity of this measure, multiply it  by $\phi_R$, and integrate 
over $[0,T]\times \mathbb{R}^{2d}$
\begin{align*}
\int_{\mathbb{R}^{2d}}|f(T)-g(T)|\phi_R dxdp\leq & \int_{\mathbb{R}^{2d}}|f_0-g_0|\phi_R dxdp\\
&+\int_0^{T}\int_{\mathbb{R}^{2d}}|f-g|\Big[(1-f-g)(p\cdot \nabla_x \phi-p\cdot \nabla_p \phi)+\Delta_p \phi\Big]dxdpdt \\
\leq & \int_{\mathbb{R}^{2d}}|f_0-g_0|\phi_R dxdp+C\int_0^{T}\int_{\sqrt{|x|^2+|p|^2}\geq R}|f-g|dxdpdt. 
\end{align*}
The terms $p\cdot \nabla_x \phi-p\cdot \nabla_p \phi$ and $\Delta_p \phi$ are bounded uniformly with respect to $R$ (see \eqref{zeta}-\eqref{gr.z} below for a similar function).
Using  $f,g\in L^1([0,T]\times \mathbb{R}^{2d})$ and  the monotone convergence theorem, we let $R\to \infty$ and obtain 
\begin{align*}
\int_{\mathbb{R}^{2d}}|f(T)-g(T)| dxdp\leq  \int_{\mathbb{R}^{2d}}|f_0-g_0| dxdp. 
\end{align*} 
\end{proof}

\section{Solution to the uniformly  parabolic equation}
In this section we analyze the uniformly parabolic equation \eqref{Eq}: we establish   existence and uniqueness of global solutions and their regularity.

To prove existence and uniqueness of solutions to \eqref{Eq}, we consider the fundamental solution of its linear part:
Let us define
\begin{equation*}
G(t,x,p, x_0,p_0)=\frac{1}{(4\pi^2 \gamma(t))^{d/2}}\mathrm{exp}\left(-\frac{\Big|\sqrt{\frac{\alpha(t)}{\gamma(t)}}X-\frac{\beta(t)}{\sqrt{\alpha(t)\gamma(t)}}P\Big|^2+\Big|\frac{P}{\sqrt{\alpha(t)}}\Big|^2}{2}\right),
\end{equation*}
where $$X\colonequals x-x_0-(1-e^{-t})p_0,\,P\colonequals p-e^{-t}p_0,$$
$$\alpha(t)\colonequals  1-e^{-2t},\, \beta(t)\colonequals  (1-e^{-t})^2,$$ 
$$\gamma(t)\colonequals  2(1-e^{-t})[(1+\varepsilon)t(1+e^{-t})-2(1-e^{-t})].$$
It can be proven (see \cite{Hor}, \cite[Lemma 2.5]{AE}, \cite{Fried}) that $\int_{\mathbb{R}^{2d}}G(t,x,p,x_0,p_0)f_0(x_0,p_0)dx_0dp_0$ is a unique smooth solution of the linear parabolic equation  
$$\begin{cases}
\partial_t f_{\varepsilon}+p\cdot \nabla_x f_{\varepsilon}=\mathrm{div}_{p}(\nabla_p f_{\varepsilon}+pf_{\varepsilon})+\varepsilon \Delta_x f_{\varepsilon},\, \, \, \, \, \,x,p\in \mathbb{R}^d,\,t>0,\\
f_{\varepsilon}(0,x,p)=f_0(x,p).
\end{cases}$$
Since this equation is the linear part of \eqref{Eq}, the Duhamel principle suggests to convert the nonlinear equation \eqref{Eq}   to a nonlinear 
integral equation
\begin{align}\label{Duh}
&f_{\varepsilon}(t,x,p)=\int_{\mathbb{R}^{2d}}G(t,x,p,x_0,p_0)f_0(x_0,p_0)dx_0dp_0\nonumber\\
&+\int_0^t\int_{\mathbb{R}^{2d}}f_{\varepsilon}^2(s,x_0,p_0)p_0\cdot(\nabla_{p_0}G(t-s,x,p,x_0,p_0)-\nabla_{x_0}G(t-s,x,p,x_0,p_0))dx_0dp_0ds.
\end{align}
We mention that a function $f$ satisfying \eqref{Duh} is called a \emph{mild solution} to \eqref{Eq}, see
\cite[Section 6.1]{Pazy}. Using the properties of $G(t,x,p,x_0,p_0)$ and fixed point arguments we show
that there is a unique solution to this integral equation. 
\subsection{Auxiliary lemmas}
In this subsection we prove some estimates involving the terms on the right hand side of \eqref{Duh}. These estimates  will be used later.

Let $k\geq 1$. We define the spaces $\Upsilon \colonequals L^{\infty}(\mathbb{R}^{2d})\cap L^{1}_1(\mathbb{R}^{2d})\cap L^{k}_1(\mathbb{R}^{2d})$ and $\Upsilon_T\colonequals C([0,T]; \Upsilon)$ for any $T>0$ with the norms
\begin{equation}\label{norm}
||f(t)||_{\Upsilon}\colonequals \mathrm{max}\big\{||f(t)||_{L^{\infty}(\mathbb{R}^{2d})}, ||f(t)||_{L^{1}_1(\mathbb{R}^{2d})}, ||f(t)||_{L^{k}_1(\mathbb{R}^{2d})}\big\}\,\,\,\,\,\text{and}\,\,\,\,\,||f||_{\Upsilon_T}\colonequals \mathrm{max}_{0\leq t\leq T}||f(t)||_{\Upsilon},
\end{equation}
 where 
$||f||_{L^k_1(\mathbb{R}^{2d})}\colonequals \left(\int_{\mathbb{R}^{2d}}(1+|p|)^k|f|^kdxdp\right)^{\frac{1}{k}}$.

We define a linear operator
\begin{align*}
\mathcal{G}[u](t,x,p)\colonequals \int_{\mathbb{R}^{2d}}G(t,x,p,x_0,p_0)u(x_0,p_0)dx_0dp_0
\end{align*}
for functions $u\in L^k_1$ and $t>0.$

The following lemma will be used to prove the local existence result  in Theorem \ref{th:loc}. 
\begin{lemma} Let $k\geq 1$.  There is a constant $C_1=C_1(k,d)>0$ such that, for all $u\in  L^k_1$, 
\begin{align}\label{fun1}
||\mathcal{G}[u](t)||_{L^k_1}\leq C_1e^{dt}||u||_{L^k_1},\,\,\,\forall\, t> 0.
\end{align}
\end{lemma}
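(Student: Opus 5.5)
The plan is to exploit the explicit Gaussian structure of $G$. Everything follows from two normalization identities and a Gaussian moment bound, combined through Jensen's (Hölder's) inequality.

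\textbf{Step 1: the two marginals of $G$.} First I verify, for fixed $(x_0,p_0)$ and $t>0$,
$$\int_{\mathbb{R}^{2d}}G\,dx\,dp=1 .$$
To see this, change variables $(x,p)\mapsto(Y,Z)$ with
$$Z=P/\sqrt{\alpha(t)},\qquad Y=\sqrt{\alpha/\gamma}\,X-\frac{\beta}{\sqrt{\alpha\gamma}}P .$$
This map is triangular. Its Jacobian contributes $(\gamma/\alpha)^{d/2}\alpha^{d/2}=\gamma^{d/2}$, which cancels the prefactor, and $(4\pi^2)^{d/2}=(2\pi)^d$ is exactly the standard Gaussian normalization in $2d$ variables.

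Next I verify, for fixed $(x,p)$,
$$\int_{\mathbb{R}^{2d}}G\,dx_0\,dp_0=e^{dt}.$$
Here the map $(x_0,p_0)\mapsto(X,P)$ is again triangular, with $dP=e^{-dt}dp_0$ and $dX=dx_0$ at fixed $p_0$. So $dx_0\,dp_0=e^{dt}\,dX\,dP$, and the same Gaussian computation gives the factor $e^{dt}$.

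\textbf{Step 2: a Gaussian moment bound.} Under the change of variables of Step 1, $P=\sqrt{\alpha(t)}\,Z$ with $Z$ a standard Gaussian. Since $\alpha(t)=1-e^{-2t}\le1$, this gives
$$\int_{\mathbb{R}^{2d}}G\,(1+|P|)^k\,dx\,dp\le \mathbb{E}(1+|Z|)^k=:C(k,d),$$
uniformly in $t>0$ and $(x_0,p_0)$.

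\textbf{Step 3: Jensen and the weight.} Jensen's (Hölder's) inequality with the measure $G\,dx_0dp_0$, whose total mass is $e^{dt}$ by Step 1, gives
$$|\mathcal{G}[u](t,x,p)|^k\le e^{d(k-1)t}\int_{\mathbb{R}^{2d}}G\,|u(x_0,p_0)|^k\,dx_0dp_0 .$$
For the weight I use $p=P+e^{-t}p_0$, hence $|p|\le|P|+|p_0|$. This yields
$$(1+|p|)^k\le(1+|p_0|)^k(1+|P|)^k .$$

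\textbf{Step 4: conclusion.} Multiply the Jensen bound by $(1+|p|)^k$, apply the weight inequality, integrate in $(x,p)$, and use Fubini together with Step 2. The result is
$$\|\mathcal{G}[u](t)\|_{L^k_1}^k\le C(k,d)\,e^{d(k-1)t}\,\|u\|_{L^k_1}^k .$$
Taking $k$-th roots and using $e^{d(k-1)t/k}\le e^{dt}$ gives \eqref{fun1} with $C_1=C(k,d)^{1/k}$.

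\textbf{Main obstacle.} The only delicate point is the bookkeeping in Step 1: checking that the prefactor $(4\pi^2\gamma)^{-d/2}$ produces total mass exactly $1$ in $(x,p)$, and exactly $e^{dt}$ in $(x_0,p_0)$. I expect this to be harmless because the quadratic form is written in completed-square triangular form. Note that $\gamma(t)>0$ for $t>0$ is needed for $G$ to be well defined and is implicit in the paper.
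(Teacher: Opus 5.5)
Your proof is correct, and it reaches the bound by a somewhat different mechanism than the paper. Both proofs split the weight the same way, $1+|p|\le(1+|P|)(1+|e^{-t}p_0|)$.

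The paper then passes to the sheared variables $\xi=x-(e^t-1)p$, $\eta=e^tp$. In these variables $\mathcal{G}[u]$ becomes a genuine convolution, and the paper applies Young's inequality with the $L^1$ norm of the weighted kernel. That norm is computed by the same Gaussian changes of variables you use in Steps 1--2.

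You never use the convolution structure. Instead you run a Schur-type argument: Hölder's inequality against the measure $G\,dx_0\,dp_0$, whose mass $e^{dt}$ you actually verify (the paper only asserts it later). You then apply Tonelli together with the weighted column bound $\int G(1+|P|)^k\,dx\,dp\le C(k,d)$, uniform in $t$ and $(x_0,p_0)$.

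Your route has three advantages. It is more self-contained. It avoids tracking the Jacobian between $dx\,dp$ and $d\xi\,d\eta$. It also gives the slightly sharper growth factor $e^{d(1-1/k)t}$.

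The paper's convolution formulation pays off in Lemma~\ref{lem2} and in the derivative estimates. There, $L^q\to L^k$ smoothing bounds with $q<k$ come from Young's inequality with $1+1/k=1/q+1/r$, and the $L^r$ norms of the kernel produce the singular powers of $t$. To obtain those from your approach, you would need the $L^r$ version of the Schur test, with bounds on both $\sup_{x,p}\|G(t,x,p,\cdot,\cdot)\|_{L^r}$ and $\sup_{x_0,p_0}\|G(t,\cdot,\cdot,x_0,p_0)\|_{L^r}$, rather than plain Jensen.
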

\begin{proof} 
Using $1+|p|\leq 1+|P|+|e^{-t}p_0|\leq (1+|P|)(1+|e^{-t}p_0|)$ we estimate 
\begin{align}\label{*}
(1+|p|)|\mathcal{G}[u](t,x,p)|\leq \int_{\mathbb{R}^{2d}}&\frac{(1+|P|)}{(4\pi^2 \gamma(t))^{d/2}}\mathrm{exp}\left(-\frac{\Big|\sqrt{\frac{\alpha(t)}{\gamma(t)}}X-\frac{\beta(t)}{\sqrt{\alpha(t)\gamma(t)}}P\Big|^2+\Big|\frac{P}{\sqrt{\alpha(t)}}\Big|^2}{2}\right)\times  \nonumber \\
& \times (1+|e^{-t}p_0|)|u(x_0,p_0)|dx_0dp_0.
\end{align}
Let us denote
$$\xi\colonequals x-(e^t-1)p,\,\eta\colonequals e^t p.$$
Then we have $$X=\xi-x_0+(1-e^{-t})(\eta-p_0),\,P=e^{-t}(\eta-p_0).$$
Hence,  we can consider the right hand side of \eqref{*} 
as a convolution with respect to the variables $\xi$ and $\eta$, i.e., we can write it as $\int_{\mathbb{R}^{2d}} U(t,\xi-x_0, \eta-p_0)V(t, x_0,p_0)dx_0dp_0$ with $$U(t,\xi-x_0, \eta-p_0)\colonequals \frac{(1+|P|)}{(4\pi^2 \gamma(t))^{d/2}}\mathrm{exp}\left(-\frac{\Big|\sqrt{\frac{\alpha(t)}{\gamma(t)}}X-\frac{\beta(t)}{\sqrt{\alpha(t)\gamma(t)}}P\Big|^2+\Big|\frac{P}{\sqrt{\alpha(t)}}\Big|^2}{2}\right)$$ and  $$V(t, x_0,p_0)\colonequals (1+|e^{-t}p_0|)|u(x_0,p_0)|.$$ This allows us to use the Young inequality
\begin{align*}
||\mathcal{G}[u](t)||_{L^k_1}\leq C& \left(\int_{\mathbb{R}^{2d}}(1+|e^{-t}p|)^k|u(x,p)|^kdxdp\right)^{\frac{1}{k}}
\times \\
&\times \int_{\mathbb{R}^{2d}}\frac{(1+|\tilde{P}|)}{(4\pi^2 \gamma(t))^{d/2}}\mathrm{exp}\left(-\frac{\Big|\sqrt{\frac{\alpha(t)}{\gamma(t)}}\tilde{X}-\frac{\beta(t)}{\sqrt{\alpha(t)\gamma(t)}}\tilde{P}\Big|^2+\Big|\frac{\tilde{P}}{\sqrt{\alpha(t)}}\Big|^2}{2}\right)d\xi d\eta,
\end{align*}
where  $\tilde{X}\colonequals \xi+(1-e^{-t})\eta,$  $\tilde{P}\colonequals e^{-t}\eta,$ and  $C$ is a positive constant. By changing the variables $\xi=\tilde{X}-(e^t-1)\tilde{P}$ and $\eta=e^t \tilde{P},$ we compute
\begin{align*}
&\int_{\mathbb{R}^{2d}}\frac{(1+|\tilde{P}|)}{(4\pi^2 \gamma(t))^{d/2}}\mathrm{exp}\left(-\frac{\Big|\sqrt{\frac{\alpha(t)}{\gamma(t)}}\tilde{X}-\frac{\beta(t)}{\sqrt{\alpha(t)\gamma(t)}}\tilde{P}\Big|^2+\Big|\frac{\tilde{P}}{\sqrt{\alpha(t)}}\Big|^2}{2}\right)d\xi d\eta\\
&= e^{dt}\int_{\mathbb{R}^{2d}}\frac{(1+|\tilde{P}|)}{(4\pi^2 \gamma(t))^{d/2}}\mathrm{exp}\left(-\frac{\Big|\sqrt{\frac{\alpha(t)}{\gamma(t)}}\tilde{X}-\frac{\beta(t)}{\sqrt{\alpha(t)\gamma(t)}}\tilde{P}\Big|^2+\Big|\frac{\tilde{P}}{\sqrt{\alpha(t)}}\Big|^2}{2}\right)d\tilde{X} d\tilde{P}.
\end{align*}
We change the variables once more $Z\colonequals \sqrt{\frac{\alpha(t)}{\gamma(t)}}\tilde{X}-\frac{\beta(t)}{\sqrt{\alpha(t)\gamma(t)}}\tilde{P}$ and $Y\colonequals \frac{\tilde{P}}{\sqrt{\alpha(t)}}$ by noting $d\tilde{X}d\tilde{P}=(\gamma(t))^{d/2} dZdY$
\begin{align*}
& e^{dt}\int_{\mathbb{R}^{2d}}\frac{(1+|\tilde{P}|)}{(4\pi^2 \gamma(t))^{d/2}}\mathrm{exp}\left(-\frac{\Big|\sqrt{\frac{\alpha(t)}{\gamma(t)}}\tilde{X}-\frac{\beta(t)}{\sqrt{\alpha(t)\gamma(t)}}\tilde{P}\Big|^2+\Big|\frac{\tilde{P}}{\sqrt{\alpha(t)}}\Big|^2}{2}\right)d\tilde{X} d\tilde{P}\\
&=e^{dt}\int_{\mathbb{R}^{2d}}\frac{(1+|\sqrt{\alpha(t)}{Y}|)}{(4\pi^2)^{d/2}}\mathrm{exp}\left(-\frac{|Z|^2+|Y|^2}{2}\right)dZ dY
.
\end{align*}
The above estimates show
\begin{align*}
||\mathcal{G}[u](t)||_{L^k_1}\leq  Ce^{dt}&\left(\int_{\mathbb{R}^{2d}}(1+|e^{-t}p|)^k|u(x,p)|^kdxdp\right)^{\frac{1}{k}}\times\\
&\times \int_{\mathbb{R}^{2d}}\frac{(1+|\sqrt{\alpha(t)}{Y}|)}{(4\pi^2)^{d/2}}\mathrm{exp}\left(-\frac{|Z|^2+|Y|^2}{2}\right)dZdY.
\end{align*}
Since $1+|e^{-t}p|\leq 1+|p|,$ $1+|\sqrt{\alpha(t)}{Y}|\leq 1+|{Y}|,$ and   $\int_{\mathbb{R}^{2d}}\frac{(1+|\sqrt{\alpha(t)}{Y}|)}{(4\pi^2)^{d/2}}\mathrm{exp}\left(-\frac{|Z|^2+|Y|^2}{2}\right)dZdY$ is finite, we obtain the claimed estimate.  
\end{proof}

For all $t> 0$ and for  vector functions $w:\mathbb{R}^d\times \mathbb{R}^d\to \mathbb{R}^d$,  we define
\begin{equation*}
\mathcal{F}[w](t,x,p)\colonequals \int_{\mathbb{R}^{2d}}w(x_0,p_0)\cdot(\nabla_{p_0}G(t,x,p,x_0,p_0)-\nabla_{x_0}G(t,x,p,x_0,p_0))dx_0dp_0.
\end{equation*}
Let us denote
$$\xi\colonequals x-(e^t-1)p,\,\eta\colonequals e^t p.$$ Then we can consider $\mathcal{F}[w]$ as a convolution with respect to $\xi$ and $\eta$, i.e., we can write it as $\int_{\mathbb{R}^{2d}} U(t,\xi-x_0, \eta-p_0)\cdot w (x_0,p_0)dx_0dp_0$ with $$U(t,\xi-x_0, \eta-p_0)\colonequals \nabla_{p_0}G(t,x,p,x_0,p_0)-\nabla_{x_0}G(t,x,p,x_0,p_0).$$  Hence, we can write
\begin{equation*}
\mathcal{F}[w](t,x,p)= \int_{\mathbb{R}^{2d}}w(x-(e^t-1)p-x_0, e^tp-p_0)\cdot U(t,x_0, p_0) dx_0dp_0.
\end{equation*} 
This helps us compute
\begin{align}\label{x_i-der}
\partial_{x_i}\mathcal{F}[w](t,x,p)=\int_{\mathbb{R}^{2d}}\partial_{x_{0,i}}w(x_0,p_0)\cdot(\nabla_{p_0}G(t,x,p,x_0,p_0)-\nabla_{x_0}G(t,x,p,x_0,p_0))dx_0dp_0
\end{align}
 and 
\begin{align}\label{p_i-der}
\partial_{p_i}\mathcal{F}[w](t,x,p)
=\int_{\mathbb{R}^{2d}}[(1-e^t)\partial_{x_{0,i}}w(x_0,p_0)&+e^t\partial_{p_{0,i}}w(x_0,p_0)]\cdot(\nabla_{p_0}G(t,x,p,x_0,p_0)\nonumber \\
&-\nabla_{x_0}G(t,x,p,x_0,p_0))dx_0dp_0
\end{align}
for all for all $x_0=(x_{0,1},...,x_{0,d})\in \mathbb{R}^d$ and  $p_0=(p_{0,1},...,p_{0,d})\in \mathbb{R}^d.$ Equations \eqref{x_i-der} and \eqref{p_i-der} will be used to prove the regularity result in Theorem \ref{th:reg}.

The following lemma will be used to prove the local existence result  in Theorem \ref{th:loc}.
\begin{lemma}\label{lem2}
 Let $1\leq q\leq k\leq \infty$ and $T>0$.    There are constants $C_2=C_2(k,q,d, \varepsilon, T)>0$ and $C_3=C_3(k,q,d, \varepsilon, T)>0$ such that 
\begin{align}\label{fun.est11}
||\mathcal{F}[w](t)||_{L^k}\leq \frac{C_2e^{d(1+\frac{1}{k}-\frac{1}{q}-\frac{1}{d})t}}{ t^{d(\frac{1}{q}-\frac{1}{k})+\frac{1}{2}}}||w||_{L^q},\,\,\,\,\forall\,t\in (0,T]
\end{align}
and 
\begin{align}\label{fun.est}
||\mathcal{F}[w](t)||_{L^k_1}\leq \frac{C_3e^{d(1+\frac{1}{k}-\frac{1}{q}-\frac{1}{d})t}}{ t^{d(\frac{1}{q}-\frac{1}{k})+\frac{1}{2}}}||w||_{L^q_1},\,\,\,\,\forall\,t\in (0, T]
\end{align}
hold for all vector functions $w:\mathbb{R}^d\times \mathbb{R}^d\to \mathbb{R}^d$   with $|w|\in L^q_1$.
\end{lemma}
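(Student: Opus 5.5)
We follow the scheme of the proof of \eqref{fun1}: pass to the variables $\xi= x-(e^t-1)p$, $\eta=e^tp$, in which $\mathcal{F}[w](t)$ is a convolution
$$\mathcal{F}[w](t,x,p)=\int_{\mathbb{R}^{2d}}U(t,\xi-x_0,\eta-p_0)\cdot w(x_0,p_0)\,dx_0dp_0,\qquad U=\nabla_{p_0}G-\nabla_{x_0}G .$$
The Jacobian of $(x,p)\mapsto(\xi,\eta)$ is $e^{dt}$. Hence $\|\mathcal{F}[w](t)\|_{L^k_{x,p}}=e^{-dt/k}\|U*w\|_{L^k_{\xi,\eta}}$. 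Young's inequality with $1+\frac1k=\frac1r+\frac1q$ then gives
$$\|\mathcal{F}[w](t)\|_{L^k}\le e^{-dt/k}\,\|U(t)\|_{L^r}\,\|w\|_{L^q}.$$
The whole proof therefore reduces to an $L^r$ estimate of the kernel $U(t,\cdot,\cdot)$ in the variables $(\tilde X,\tilde P)$, i.e. with $\tilde X=\xi+(1-e^{-t})\eta$ and $\tilde P=e^{-t}\eta$ as in the previous proof.

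\textbf{Derivatives of $G$.} Write the exponent as $-\tfrac12(|Z|^2+|Y|^2)$ with
$$Z=\sqrt{\alpha/\gamma}\,X-\frac{\beta}{\sqrt{\alpha\gamma}}P,\qquad Y=P/\sqrt{\alpha}.$$
Since $\nabla_{x_0}X=-I$, $\nabla_{p_0}X=-(1-e^{-t})I$ and $\nabla_{p_0}P=-e^{-t}I$, the chain rule gives
$$|U|\lesssim \Big(\sqrt{\alpha/\gamma}+\frac{\beta}{\sqrt{\alpha\gamma}}+\frac{1}{\sqrt\alpha}\Big)\,(|Z|+|Y|)\,G .$$
For $t\in(0,T]$ we have $\alpha\sim t$, $\beta\sim t^2$, and $\gamma\sim \varepsilon t^2$ as $t\to0$; note $(1+\varepsilon)t(1+e^{-t})-2(1-e^{-t})\ge \varepsilon t(1+e^{-t})$, because $t(1+e^{-t})\ge 2(1-e^{-t})$. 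Hence the prefactor is $\lesssim C(\varepsilon,T)\,t^{-1/2}$.

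\textbf{$L^r$ norm of the kernel.} Change variables to $(Z,Y)$, using $d\tilde Xd\tilde P=\gamma^{d/2}dZdY$ and, from the previous proof, $d\xi d\eta=e^{dt}d\tilde Xd\tilde P$. Since $G\lesssim \gamma^{-d/2}e^{-(|Z|^2+|Y|^2)/2}$, this yields
$$\|U(t)\|_{L^r}\lesssim t^{-1/2}\,\gamma^{-d/2}\,\big(e^{dt}\gamma^{d/2}\big)^{1/r}=t^{-1/2}\,e^{dt/r}\,\gamma^{-\frac d2(1-\frac1r)} .$$
Using $1-\frac1r=\frac1q-\frac1k$ and $\gamma\gtrsim t^2$ (for fixed $\varepsilon$, on $(0,T]$), we get $\gamma^{-\frac d2(1-\frac1r)}\lesssim t^{-d(\frac1q-\frac1k)}$. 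Collecting the exponentials gives $e^{dt/r-dt/k}=e^{d(1-\frac1q)t}$. This matches the claimed factor $e^{d(1+\frac1k-\frac1q-\frac1d)t}$ up to $e^{(d/k-1)t}$, which is bounded on $[0,T]$ and is absorbed into $C_2$. This proves \eqref{fun.est11}.

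\textbf{Weighted estimate \eqref{fun.est}.} As in \eqref{*}, bound $1+|p|\le(1+|P|)(1+|e^{-t}p_0|)\le (1+|P|)(1+|p_0|)$ and put the weight $(1+|P|)=(1+\sqrt\alpha|Y|)$ into the kernel. This only multiplies the Gaussian integrand by a polynomial factor in $Y$, so the same computation applies with another constant. Then apply Young's inequality to $(1+|p_0|)|w|$ in $L^q$.

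\textbf{Main obstacle.} The delicate step is the small-time asymptotics of $\alpha,\beta,\gamma$. The bound $\gamma\gtrsim \varepsilon t^2$, which follows from the elementary inequality above, is exactly where the spatial viscosity enters and why $C_2, C_3$ depend on $\varepsilon$. One must also check that every cross term in the chain rule is at most $t^{-1/2}$ times a polynomial in $(Z,Y)$. For instance, $\beta/\sqrt{\alpha\gamma}\sim t^2/(t^{1/2}\cdot t)=t^{1/2}$, so this term is harmless.
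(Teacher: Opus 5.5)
Your proposal is correct and follows essentially the same route as the paper: the convolution structure in $(\xi,\eta)$, Young's inequality with $1+\frac1k=\frac1q+\frac1r$, the change of variables to $(Z,Y)$ together with the small-time bounds $\gamma^{-1/2}\le C/t$ and a $t^{-1/2}$ prefactor, and the weight handled through $1+|p|\le(1+|P|)(1+|e^{-t}p_0|)$. Your deviations are cosmetic. You keep the Jacobian factor $e^{-dt/k}$ of the output norm, which the paper silently bounds by $1$, and you drop the $e^{-t}$ in $(\nabla_{p_0}-\nabla_{x_0})G$, which the paper keeps; both change only exponentials that are bounded on $[0,T]$. You also justify $\gamma\ge 2\varepsilon t\alpha$ explicitly via $t(1+e^{-t})\ge 2(1-e^{-t})$, whereas the paper only asserts the corresponding bound.
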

\begin{proof} We only prove \eqref{fun.est} and the proof of \eqref{fun.est11} follows similarly.

 We compute 
 \begin{align*}
 (\nabla_{p_0}-&\nabla_{x_0})G(t,x,p,x_0,p_0)\\
 &=-e^{-t}G(t,x,p,x_0,p_0)\left[\left(\sqrt{\frac{\alpha}{\gamma}}X-\frac{\beta}{\sqrt{\alpha \gamma}}P\right)\left(\sqrt{\frac{\alpha}{\gamma}}+\frac{\beta}{\sqrt{\alpha \gamma}}\right)-\frac{1}{\sqrt{\alpha}}\frac{P}{\sqrt{\alpha}}\right]
 \end{align*}
and so
 \begin{align*}
\mathcal{F}[w](t,x,p)\colonequals -e^{-t} \int_{\mathbb{R}^{2d}}G(t,x,p,x_0,p_0) w(x_0,p_0)\cdot\Big[\Big(\sqrt{\frac{\alpha}{\gamma}}X-\frac{\beta}{\sqrt{\alpha \gamma}}P\Big)&\Big(\sqrt{\frac{\alpha}{\gamma}}+\frac{\beta}{\sqrt{\alpha \gamma}}\Big)\\
&-\frac{1}{\sqrt{\alpha}}\frac{P}{\sqrt{\alpha}}\Big]dx_0dp_0. 
\end{align*}
Using $1+|p|\leq (1+|P|)(1+|e^{-t}p_0|)$ we estimate  
\begin{align}\label{**}
(1&+|p|)\mathcal{F}[w](t,x,p)\leq   e^{-t} \int_{\mathbb{R}^{2d}}(1+|e^{-t}p_0|)|w(x_0,p_0)|\times\nonumber\\
 &\times \left|\left(\sqrt{\frac{\alpha}{\gamma}}X-\frac{\beta}{\sqrt{\alpha \gamma}}P\right)\left(\sqrt{\frac{\alpha}{\gamma}}+\frac{\beta}{\sqrt{\alpha \gamma}}\right)-\frac{1}{\sqrt{\alpha}}\frac{P}{\sqrt{\alpha}}\right| (1+|P|)G(t,x,p,x_0,p_0)dx_0dp_0.
\end{align}
Let us denote
$$\xi\colonequals x-(e^t-1)p,\,\eta\colonequals e^t p.$$
Then we have $$X=\xi-x_0+(1-e^{-t})(\eta-p_0),\,P=e^{-t}(\eta-p_0).$$
Hence,  we can consider
the right hand side of \eqref{**}
as a convolution with respect to the variables $\xi$ and $\eta,$ i.e., we can write it as $\int_{\mathbb{R}^{2d}} U(t,\xi-x_0,\eta-p_0)V(t,x_0,p_0)dx_0dp_0$ with 
$$U(t,\xi-x_0,\eta-p_0)\colonequals \left|\left(\sqrt{\frac{\alpha}{\gamma}}X-\frac{\beta}{\sqrt{\alpha \gamma}}P\right)\left(\sqrt{\frac{\alpha}{\gamma}}+\frac{\beta}{\sqrt{\alpha \gamma}}\right)-\frac{1}{\sqrt{\alpha}}\frac{P}{\sqrt{\alpha}}\right| (1+|P|)G(t,x,p,x_0,p_0)$$
and $$V(t,x_0,p_0)\colonequals (1+|e^{-t}p_0|)|w(x_0,p_0)|.$$
 This allows us to use the Young inequality
\begin{align*}
&||\mathcal{F}[w](t)||_{L^k_1}\leq Ce^{-t} \left(\int_{\mathbb{R}^{2d}}(1+|e^{-t}p|)^q|w|^qdxdp\right)^{\frac{1}{q}}\times\\ 
&\times \left( \int_{\mathbb{R}^{2d}}\left|\left(\sqrt{\frac{\alpha}{\gamma}}\tilde{X}-\frac{\beta}{\sqrt{\alpha \gamma}}\tilde{P}\right)\left(\sqrt{\frac{\alpha}{\gamma}}+\frac{\beta}{\sqrt{\alpha \gamma}}\right)-\frac{1}{\sqrt{\alpha}}\frac{\tilde{P}}{\sqrt{\alpha}}\right|^r (1+|\tilde{P}|)^r \tilde{G}^r(t,\tilde{X},\tilde{P}) d\xi d\eta \right)^{\frac{1}{r}},
\end{align*}
where $1/k+1=1/q+1/r,$ $\tilde{X}\colonequals \xi+(1-e^{-t})\eta,$ $\tilde{P}\colonequals e^{-t}\eta,$ and $$\tilde{G}(t,\tilde{X},\tilde{P})\colonequals \frac{1}{(4\pi^2 \gamma(t))^{d/2}}\mathrm{exp}\left(-\frac{\Big|\sqrt{\frac{\alpha(t)}{\gamma(t)}}\tilde{X}-\frac{\beta(t)}{\sqrt{\alpha(t)\gamma(t)}}\tilde{P}\Big|^2+\Big|\frac{\tilde{P}}{\sqrt{\alpha(t)}}\Big|^2}{2}\right).$$ 
Here, we use $C$ to denote positive constants which can vary from one line to another.
By changing the variables $\xi=\tilde{X}-(e^t-1)\tilde{P}$ and $\eta=e^t \tilde{P},$ we compute
\begin{align*}
&\int_{\mathbb{R}^{2d}}\left|\left(\sqrt{\frac{\alpha}{\gamma}}\tilde{X}-\frac{\beta}{\sqrt{\alpha \gamma}}\tilde{P}\right)\left(\sqrt{\frac{\alpha}{\gamma}}+\frac{\beta}{\sqrt{\alpha \gamma}}\right)-\frac{1}{\sqrt{\alpha}}\frac{\tilde{P}}{\sqrt{\alpha}}\right|^r (1+|\tilde{P}|)^r \tilde{G}^r(t,\tilde{X},\tilde{P}) d\xi d\eta\\&=e^{dt}\int_{\mathbb{R}^{2d}}\left|\left(\sqrt{\frac{\alpha}{\gamma}}\tilde{X}-\frac{\beta}{\sqrt{\alpha \gamma}}\tilde{P}\right)\left(\sqrt{\frac{\alpha}{\gamma}}+\frac{\beta}{\sqrt{\alpha \gamma}}\right)-\frac{1}{\sqrt{\alpha}}\frac{\tilde{P}}{\sqrt{\alpha}}\right|^r (1+|\tilde{P}|)^r \tilde{G}^r(t,\tilde{X},\tilde{P}) d\tilde{X} d\tilde{P}.
\end{align*} 
We change the variables once more $Z\colonequals \sqrt{\frac{\alpha(t)}{\gamma(t)}}\tilde{X}-\frac{\beta(t)}{\sqrt{\alpha(t)\gamma(t)}}\tilde{P}$ and $Y\colonequals \frac{\tilde{P}}{\sqrt{\alpha(t)}}$
\begin{align*}
&e^{dt}\int_{\mathbb{R}^{2d}}\left|\left(\sqrt{\frac{\alpha}{\gamma}}\tilde{X}-\frac{\beta}{\sqrt{\alpha \gamma}}\tilde{P}\right)\left(\sqrt{\frac{\alpha}{\gamma}}+\frac{\beta}{\sqrt{\alpha \gamma}}\right)-\frac{1}{\sqrt{\alpha}}\frac{\tilde{P}}{\sqrt{\alpha}}\right|^r (1+|\tilde{P}|)^r \tilde{G}^r(t,\tilde{X},\tilde{P}) d\tilde{X} d\tilde{P}\\&=\frac{e^{dt}}{(4\pi^2)^{dr/2} (\gamma(t))^{\frac{d(r-1)}{2}}}\int_{\mathbb{R}^{2d}}\left|Z\left(\sqrt{\frac{\alpha}{\gamma}}+\frac{\beta}{\sqrt{\alpha\gamma}}\right)-\frac{1}{\sqrt{\alpha}}Y\right|^r (1+|\sqrt{\alpha}Y|)^r e^{-\frac{r(|Z|^2+|Y|^2)}{2}} dZ dY.
\end{align*} 
We use  $\sqrt{\alpha(t)}\leq 1$ and $\left|Z\left(\sqrt{\frac{\alpha}{\gamma}}+\frac{\beta}{\sqrt{\alpha\gamma}}\right)-\frac{1}{\sqrt{\alpha}}Y\right|^r \leq C\left(\left|\sqrt{\frac{\alpha}{\gamma}}+\frac{\beta}{\sqrt{\alpha\gamma}}\right|^r|Z|^r+\frac{1}{\alpha^{r/2}}|Y|^r \right)$
to estimate 
\begin{align*}
&\frac{e^{dt}}{(4\pi^2)^{dr/2} (\gamma(t))^{\frac{d(r-1)}{2}}}\int_{\mathbb{R}^{2d}}\left|Z\left(\sqrt{\frac{\alpha}{\gamma}}+\frac{\beta}{\sqrt{\alpha\gamma}}\right)-\frac{1}{\sqrt{\alpha}}Y\right|^r (1+|\sqrt{\alpha}Y|)^r e^{-\frac{r(|Z|^2+|Y|^2)}{2}} dZ dY\\&\leq \frac{Ce^{dt}}{(4\pi^2)^{dr/2} (\gamma(t))^{\frac{d(r-1)}{2}}} \left|\sqrt{\frac{\alpha}{\gamma}}+\frac{\beta}{\sqrt{\alpha\gamma}}\right|^r\int_{\mathbb{R}^{2d}}|Z|^r(1+|Y|)^r e^{-\frac{r(|Z|^2+|Y|^2)}{2}} dZ dY\\
&+\frac{Ce^{dt}}{(4\pi^2)^{dr/2} (\gamma(t))^{\frac{d(r-1)}{2}}\alpha^{r/2}}\int_{\mathbb{R}^{2d}}|Y|^r(1+|Y|)^r e^{-\frac{r(|Z|^2+|Y|^2)}{2}} dZ dY.
\end{align*} 
Since  $\frac{1}{(4\pi^2)^{dr/2}}\int_{\mathbb{R}^{2d}}|Z|^r(1+|Y|)^r e^{-\frac{r(|Z|^2+|Y|^2)}{2}} dZ dY$ and $\frac{1}{(4\pi^2)^{dr/2}}\int_{\mathbb{R}^{2d}}|Y|^r(1+|Y|)^r e^{-\frac{r(|Z|^2+|Y|^2)}{2}} dZ dY$ are finite, we obtain
\begin{align*}
&\int_{\mathbb{R}^{2d}}\left|\left(\sqrt{\frac{\alpha}{\gamma}}\tilde{X}-\frac{\beta}{\sqrt{\alpha \gamma}}\tilde{P}\right)\left(\sqrt{\frac{\alpha}{\gamma}}+\frac{\beta}{\sqrt{\alpha \gamma}}\right)-\frac{1}{\sqrt{\alpha}}\frac{\tilde{P}}{\sqrt{\alpha}}\right|^r (1+|\tilde{P}|)^r \tilde{G}^r(t,\tilde{X},\tilde{P}) d\xi d\eta\\
&
\leq \frac{Ce^{dt}}{ (\gamma(t))^{\frac{d(r-1)}{2}}}\left( \left|\sqrt{\frac{\alpha}{\gamma}}+\frac{\beta}{\sqrt{\alpha\gamma}}\right|^r+\alpha^{-r/2}\right).
\end{align*}
The above estimates let us conclude
\begin{align*}
&||\mathcal{F}[w](t)||_{L^k_1}\leq \frac{Ce^{(\frac{d}{r}-1)t}}{ (\gamma(t))^{\frac{d(1-1/r)}{2}}}\left( \left|\sqrt{\frac{\alpha}{\gamma}}+\frac{\beta}{\sqrt{\alpha\gamma}}\right|+\alpha^{-1/2}\right)\left(\int_{\mathbb{R}^{2d}}(1+|p|)^q|w|^qdxdp\right)^{\frac{1}{q}}.
\end{align*} 
We can estimate
$\frac{1}{\sqrt{\gamma(t)}}\leq \frac{C}{t}$ and $\left|\sqrt{\frac{\alpha}{\gamma}}+\frac{\beta}{\sqrt{\alpha\gamma}}\right|+\alpha^{-1/2}\leq \frac{C}{t^{\frac{1}{2}}}$ for all $t\in (0,T].$ These estimates and $1/r=1+1/k-1/q$ provide 
\begin{align*}
&||\mathcal{F}[w](t)||_{L^k_1}\leq \frac{Ce^{d(1+\frac{1}{k}-\frac{1}{q}-\frac{1}{d})t}}{ t^{d(\frac{1}{q}-\frac{1}{k})+\frac{1}{2}}}\left(\int_{\mathbb{R}^{2d}}(1+|p|)^q|w|^qdxdp\right)^{\frac{1}{q}}.
\end{align*} 
\end{proof} 

The following lemma will be used to prove the regularity result in Theorem \ref{th:reg}. 
\begin{lemma}  Let $1\leq q\leq k\leq \infty$ and $T>0$.   There are constants $C_4=C_4(k,q,d, T)>0$ and $C_5=C_5(k,q,d,T)>0$ such that 
\begin{equation}\label{x-der}
\left|\left| \nabla_x \int_{\mathbb{R}^{2d}} G(t,x,p,x_0,p_0)u(x_0,p_0)dx_0dp_0\right|\right|_{L^k_1}\leq \frac{C_4e^{d(1+\frac{1}{k}-\frac{1}{q}-\frac{1}{d})t}}{ t^{d(\frac{1}{q}-\frac{1}{k})+\frac{1}{2}}}||u||_{L^q_1}
\end{equation}
and
\begin{equation}\label{p-der}
\left|\left| \nabla_p \int_{\mathbb{R}^{2d}} G(t,x,p,x_0,p_0)u(x_0,p_0)dx_0dp_0\right|\right|_{L^k_1}\leq \frac{C_5e^{d(1+\frac{1}{k}-\frac{1}{q}-\frac{1}{d})t}}{ t^{d(\frac{1}{q}-\frac{1}{k})+\frac{1}{2}}}||u||_{L^q_1}
\end{equation}
hold for all functions $u\in L^q_1$ and $t\in (0,T]$.
\end{lemma}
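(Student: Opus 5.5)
The plan is to repeat the proof of Lemma \ref{lem2} almost verbatim. The only new ingredient is an explicit formula for the derivatives of $G$ with respect to the outgoing variables $x$ and $p$.

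First I would compute these derivatives. Since $X=x-x_0-(1-e^{-t})p_0$ and $P=p-e^{-t}p_0$, differentiating the exponent gives
\begin{align*}
\nabla_x G&=-G\,\sqrt{\tfrac{\alpha}{\gamma}}\Big(\sqrt{\tfrac{\alpha}{\gamma}}X-\tfrac{\beta}{\sqrt{\alpha\gamma}}P\Big),\\
\nabla_p G&=-G\Big[-\tfrac{\beta}{\sqrt{\alpha\gamma}}\Big(\sqrt{\tfrac{\alpha}{\gamma}}X-\tfrac{\beta}{\sqrt{\alpha\gamma}}P\Big)+\tfrac{1}{\sqrt{\alpha}}\tfrac{P}{\sqrt{\alpha}}\Big].
\end{align*}
Both are $G$ times a linear combination of the normalized Gaussian variables $Z=\sqrt{\alpha/\gamma}\,X-\beta P/\sqrt{\alpha\gamma}$ and $Y=P/\sqrt{\alpha}$. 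The coefficients are $\sqrt{\alpha/\gamma}$, $\beta/\sqrt{\alpha\gamma}$ and $\alpha^{-1/2}$.

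Next, I would multiply by the weight $1+|p|$ and use $1+|p|\le(1+|P|)(1+|e^{-t}p_0|)$, exactly as in \eqref{*} and \eqref{**}. Then I would pass to the variables $\xi=x-(e^t-1)p$ and $\eta=e^tp$. In these variables $X$ and $P$ depend only on $\xi-x_0$ and $\eta-p_0$, so the expression becomes a convolution of $V=(1+|e^{-t}p_0|)|u|$ with a kernel $U(t,\xi-x_0,\eta-p_0)$.

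I would then apply Young's inequality with $1/k+1=1/q+1/r$. The $L^q$ norm of $V$ is bounded by $\|u\|_{L^q_1}$. For the $L^r$ norm of the kernel I would make the changes of variables $(\xi,\eta)\mapsto(\tilde X,\tilde P)$ and then $(\tilde X,\tilde P)\mapsto(Z,Y)$. As in Lemma \ref{lem2}, these produce the Jacobian factors $e^{dt}$ and $\gamma^{d/2}$. The result is
$$\|U\|_{L^r}\le \frac{Ce^{dt/r}}{\gamma^{\frac d2(1-\frac1r)}}\Big(\sqrt{\tfrac{\alpha}{\gamma}}+\tfrac{\beta}{\sqrt{\alpha\gamma}}+\alpha^{-1/2}\Big),$$
since Gaussian moments of $|Z|$, $|Y|$ and $(1+|Y|)$ are finite.

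The final step is the small-time asymptotics on $(0,T]$, which I expect to be the only delicate point. I would use the bound $\gamma(t)^{-1/2}\le C/t$ already employed in Lemma \ref{lem2}, together with $\sqrt{\alpha/\gamma}+\beta/\sqrt{\alpha\gamma}+\alpha^{-1/2}\le Ct^{-1/2}$. These give the power $t^{-d(1/q-1/k)-1/2}$, and collecting exponentials with $1/r=1+1/k-1/q$ gives the stated factor $e^{d(1+\frac1k-\frac1q-\frac1d)t}$.

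Here the exponential bookkeeping is a little different from Lemma \ref{lem2}. There is no $e^{-t}$ prefactor, because $\nabla_{p_0}-\nabla_{x_0}$ produced one and the derivatives here do not. Because $t\le T$, any leftover factor $e^{\pm t}$ can be absorbed into $C_4$ and $C_5$, which are allowed to depend on $T$, so the exponent in the statement is recovered. The constants are also independent of $\varepsilon$, provided $\gamma$ is bounded below using $(1+\varepsilon)\ge1$, i.e.\ $\gamma$ is increasing in $\varepsilon$.
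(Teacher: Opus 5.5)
Your argument follows the paper's proof: the same formulas for $\nabla_x G$ and $\nabla_p G$, the splitting $1+|p|\le(1+|P|)(1+|e^{-t}p_0|)$, the convolution in $(\xi,\eta)$, Young's inequality, the two changes of variables, and the small-time bounds on $\gamma,\alpha,\beta$. Your exponential bookkeeping is more accurate than the paper's, which carries $e^{(\frac dr-1)t}$ over from Lemma \ref{lem2} even though $\nabla_xG,\nabla_pG$ have no $e^{-t}$ prefactor; the extra $e^{t}\le e^{T}$ is harmless. For each fixed $\varepsilon>0$ this proves the lemma.

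Your last sentence, however, is false: monotonicity of $\gamma$ in $\varepsilon$ does not give $\varepsilon$-independent constants. Write
\[
\gamma(t)=2(1-e^{-t})\big[\varepsilon t(1+e^{-t})+h(t)\big],\qquad h(t)=t(1+e^{-t})-2(1-e^{-t})=\tfrac{t^3}{6}+O(t^4)\ge 0 .
\]
At $\varepsilon=0$ one has $\gamma(t)\sim t^4/3$. That lower bound only yields the hypoelliptic rates $\gamma^{-1/2}\lesssim t^{-2}$ and $\sqrt{\alpha/\gamma}\lesssim t^{-3/2}$.

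The rates $\gamma^{-1/2}\le C/t$ and $\sqrt{\alpha/\gamma}\le Ct^{-1/2}$ come entirely from the $\varepsilon$-term. Since $h\ge0$, one has $\gamma(t)\ge 2\varepsilon t\,\alpha(t)$. This gives $\sqrt{\alpha/\gamma}\le (2\varepsilon t)^{-1/2}$ and $\gamma^{-1/2}\le C_T\,\varepsilon^{-1/2}t^{-1}$ on $(0,T]$.

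This $\varepsilon$-dependence cannot be avoided. Take $k=q=1$ and let $u$ approximate a Dirac mass at the origin. Then $\|\partial_{x_1}\mathcal G[u](t)\|_{L^1_1}/\|u\|_{L^1_1}$ tends to at least
\[
\|\partial_{x_1}G(t,\cdot,\cdot,0,0)\|_{L^1}=\sqrt{2/\pi}\,\sqrt{\alpha/\gamma}\approx\sqrt{2/\pi}\,(2\varepsilon t+t^3/6)^{-1/2}.
\]
At $t=\sqrt{\varepsilon}$ this is of order $\varepsilon^{-3/4}$, whereas $t^{-1/2}=\varepsilon^{-1/4}$, so necessarily $C_4\gtrsim\varepsilon^{-1/2}$.

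Hence $C_4,C_5$ must depend on $\varepsilon$, just as $C_2,C_3$ do in Lemma \ref{lem2}; the statement's omission of $\varepsilon$ is an oversight. This is harmless, because the lemma is only used in Theorem \ref{th:reg} at fixed $\varepsilon$. You should drop the uniformity claim rather than try to justify it.
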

\begin{proof} 
We can easily compute that 
\begin{align*}
&\nabla_x \int_{\mathbb{R}^{2d}} G(t,x,p,x_0,p_0)u(x_0,p_0)dx_0dp_0\\
&=-\int_{\mathbb{R}^{2d}}G(t,x,p,x_0,p_0)\left(\sqrt{\frac{\alpha}{\gamma}}X-\frac{\beta}{\sqrt{\alpha\gamma}}P\right)\sqrt{\frac{\alpha}{\gamma}} u(x_0,p_0)dx_0dp_0
\end{align*}
and 
\begin{align*}
&\nabla_p \int_{\mathbb{R}^{2d}} G(t,x,p,x_0,p_0)u(x_0,p_0)dx_0dp_0\\
&=\int_{\mathbb{R}^{2d}} \left[\left(\sqrt{\frac{\alpha}{\gamma}}X-\frac{\beta}{\sqrt{\alpha \gamma}}P\right)\frac{\beta}{\sqrt{\alpha \gamma}}-\frac{1}{\sqrt{\alpha}} \frac{P}{\sqrt{\alpha}}\right] G(t,x,p,x_0,p_0) u(x_0,p_0)dx_0dp_0.
\end{align*}
The arguments in Lemma \ref{lem2} yield
\begin{align*}
&\left|\left| \nabla_x \int_{\mathbb{R}^{2d}} G(t,x,p,x_0,p_0)u(x_0,p_0)dx_0dp_0\right|\right|_{L^k_1}\leq \frac{Ce^{(\frac{d}{r}-1)t}}{ (\gamma(t))^{\frac{d(1-1/r)}{2}}}\sqrt{\frac{\alpha(t)}{\gamma(t)}}||u||_{L^q_1}
\end{align*} 
and 
\begin{align*}
&\left|\left| \nabla_p \int_{\mathbb{R}^{2d}} G(t,x,p,x_0,p_0)u(x_0,p_0)dx_0dp_0\right|\right|_{L^k_1}\leq \frac{Ce^{(\frac{d}{r}-1)t}}{ (\gamma(t))^{\frac{d(1-1/r)}{2}}}\left|\frac{\beta (t)}{\sqrt{\alpha (t) \gamma (t)}}+\frac{1}{\sqrt{\alpha(t)}}\right| ||u||_{L^q_1},
\end{align*}
where $\frac{1}{r}=1+\frac{1}{k}-\frac{1}{q}.$ Here, we use $C$ to denote positive constants which can vary from one line to another. We can estimate
$\frac{1}{\sqrt{\gamma(t)}}\leq \frac{C}{t}$, $\sqrt{\frac{\alpha(t)}{\gamma(t)}}\leq \frac{C}{\sqrt{t}}$, and $\frac{\beta (t)}{\sqrt{\alpha(t) \gamma (t)}}+\frac{1}{\sqrt{\alpha(t)}}\leq \frac{C}{\sqrt{t}}$ for all $t\in (0,T].$ These estimates complete the proof. 
\end{proof}

\subsection{Local existence and uniqueness}
\begin{theorem}\label{th:loc} Let $k>2d$ and $\Upsilon \colonequals L^{\infty}(\mathbb{R}^{2d})\cap L^{1}_1(\mathbb{R}^{2d})\cap L^{k}_1(\mathbb{R}^{2d})$ with the norm in \eqref{norm}.   Then, for every  $f_0\in \Upsilon$, there exists $T_{\mathrm{max}}\in (0,\infty]$  such that \eqref{Eq} has a unique mild  solution $f_{\varepsilon}\in C([0,T_{\mathrm{max}}); \Upsilon)$ with $f_{\varepsilon}(0,x,p)=f_0(x,p).$ If $T_{\mathrm{max}}<\infty,$ then at least one of the limits 
$$\lim_{t\nearrow T_{\mathrm{max}}} ||f_{\varepsilon}(t)||_{L^{\infty}},\,\,\,\,\lim_{t\nearrow T_{\mathrm{max}}} ||f_{\varepsilon}(t)||_{L^{1}_1}, \,\,\,\text{and}\,\,\,\lim_{t\nearrow T_{\mathrm{max}}} ||f_{\varepsilon}(t)||_{L^{k}_1}$$
is infinite. 
\end{theorem}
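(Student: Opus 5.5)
We prove local existence by a Banach fixed point argument for the Duhamel map
\[
\Phi[f](t)\colonequals \mathcal{G}[f_0](t)+\int_0^t\mathcal{F}[f^2(s)p](t-s)\,ds
\]
on a closed ball $B_R\colonequals\{f\in\Upsilon_T:\ \|f\|_{\Upsilon_T}\le R\}$, with $R\colonequals 2C\|f_0\|_{\Upsilon}$ and $T$ small. The linear part is controlled in each of the three components of the $\Upsilon$-norm by \eqref{fun1}, applied with $k$, with $1$, and with $k=\infty$ (the $L^\infty$ bound follows from the same Young-inequality argument, or simply from $\int G\,dx_0dp_0$ being bounded by $Ce^{dt}$). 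This gives $\|\mathcal{G}[f_0](t)\|_{\Upsilon}\le C e^{dT}\|f_0\|_\Upsilon$. Continuity in time of $\mathcal{G}[f_0]$ with values in $\Upsilon$ follows from the standard strong continuity of the linear Fokker--Planck semigroup on weighted $L^1$ and $L^k$ spaces. The $L^\infty$ component is more delicate for non-continuous data; I would interpret $C([0,T];\Upsilon)$ with $L^\infty$ continuity only for $t>0$, or handle it as the paper's framework intends.

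For the nonlinear term, write $w(s)\colonequals f^2(s)\,p$. Since $|p|\le 1+|p|$, we have $\|w\|_{L^q_1}\lesssim\|(1+|p|)^2 f^2\|_{L^q}$. The key is to choose exponents in Lemma~\ref{lem2} such that the time singularity $(t-s)^{-d(1/q-1/k)-1/2}$ is integrable, i.e.\ $d(1/q-1/k)<1/2$, and such that the source norm is bounded by $\|f\|_\Upsilon^2$. I would proceed as follows.
\begin{itemize}
\item \emph{Target $L^k_1$:} take $q=k$ in \eqref{fun.est}. Then $\|(1+|p|)f^2 p\|_{L^k}\le\|f\|_{L^\infty}\,\|(1+|p|)^2 f\|_{L^k}$, which produces an extra weight. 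To avoid this, use \eqref{fun.est11}/\eqref{fun.est} with $q=k/2$ together with Hölder: $\|(1+|p|)^2f^2\|_{L^{k/2}}=\|(1+|p|)f\|_{L^k}^2$. The singularity is then $d/k+1/2<1$ exactly because $k>2d$. This is precisely where the hypothesis enters.
\item \emph{Target $L^\infty$:} take $k=\infty$, $q=k/2$ in \eqref{fun.est11}. The exponent is $2d/k+1/2$, which requires $k>4d$. If that fails, I would bootstrap or interpolate: use $L^\infty\cap L^k_1$ to get $\|(1+|p|)f\|_{L^{k'}}$ for intermediate exponents via $f^2\le\|f\|_\infty f$, then apply the lemma with $q$ chosen so that $d/q<1/2$, still using only $\|f\|_\infty$ and the weighted norms.
\item \emph{Target $L^1_1$:} take $q=1$ with source $\|(1+|p|)^2f^2\|_{L^1}=\|(1+|p|)f\|_{L^2}^2$, bounded by interpolation between $L^1_1$ and $L^k_1$ (both weighted by $1+|p|$), with singularity $t^{-1/2}$.
\end{itemize}
I expect this exponent bookkeeping, especially the $L^\infty$ and $L^1_1$ components closing within $\Upsilon$, to be the main obstacle.

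Once these bounds are in place, $\|\Phi[f]\|_{\Upsilon_T}\le Ce^{cT}\|f_0\|_\Upsilon+C T^{\theta}\|f\|_{\Upsilon_T}^2$ for some $\theta>0$. Using $f^2-g^2=(f+g)(f-g)$ with the same Hölder splittings, we obtain the contraction estimate $\|\Phi[f]-\Phi[g]\|_{\Upsilon_T}\le CT^\theta R\,\|f-g\|_{\Upsilon_T}$. Choosing $T=T(\|f_0\|_\Upsilon)$ small makes $\Phi$ a contraction on $B_R$. Continuity in time of the Duhamel integral follows by dominated convergence from the integrable singularity. This yields a unique mild solution on $[0,T]$. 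Uniqueness in all of $C([0,T'];\Upsilon)$ then follows from the difference estimate together with a Gronwall-type argument for weakly singular kernels.

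Finally, since the existence time depends only on an upper bound for $\|f_0\|_\Upsilon$, we restart at $T$ with data $f_\varepsilon(T)$ and iterate. Define $T_{\max}$ as the supremum of the existence times. If $T_{\max}<\infty$ while all three norms stayed bounded, say by $M$, near $T_{\max}$, then restarting at $T_{\max}-\tau(M)/2$ would extend the solution past $T_{\max}$, a contradiction. Hence at least one of the three norms must blow up, which is the blow-up alternative.
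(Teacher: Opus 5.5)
Your proposal is correct and takes essentially the same route as the paper. Both use the Duhamel fixed point in $\Upsilon_T$, with \eqref{fun1} and $\int G\,dx_0dp_0=e^{dt}$ for the linear part, \eqref{fun.est} with $q=k/2$ for the $L^k_1$ component, and $q=1$ plus interpolation between $L^1_1$ and $L^k_1$ for the $L^1_1$ component, followed by the same restart argument for the blow-up alternative. For the $L^\infty$ component, your fallback is exactly the paper's choice: \eqref{fun.est11} with target $L^\infty$ and $q=k$, using $\||p|f^2\|_{L^k}\le\|f\|_{L^\infty}\|f\|_{L^k_1}$, gives the singularity $(t-s)^{-d/k-1/2}$, so $k>2d$ suffices and $k>4d$ is never needed.
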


\begin{proof} We omit the dependence of the solution $f\colonequals f_{\varepsilon}$ on $\varepsilon$ for notational convenience. We use $C$ to denote positive constants which can vary from one line to another.

   Let $T>0.$ In the following we assume $t\in [0, T].$
We consider the following operator
\begin{align*}
&\Gamma[f](t,x,p)=\int_{\mathbb{R}^{2d}}G(t,x,p,x_0,p_0)f_0(x_0,p_0)dx_0dp_0\nonumber\\
&+\int_0^t\int_{\mathbb{R}^{2d}}f^2(s,x_0,p_0)p_0\cdot(\nabla_{p_0}G(t-s,x,p,x_0,p_0)-\nabla_{x_0}G(t-s,x,p,x_0,p_0))dx_0dp_0ds.
\end{align*}
for functions $f\in \Upsilon_T$. We will  show that there is a fixed point of $\Gamma$ in $\Upsilon_T$.

\textbf{Step 1: Estimates on $\Gamma[f]$ in $\Upsilon_T$:} 
 One can check 
$$\int_{\mathbb{R}^{2d}}G(t,x,p,x_0,p_0)dx_0dp_0=e^{dt}$$
and so $\left|\left|\int_{\mathbb{R}^{2d}}G(t,\cdot,\cdot,x_0,p_0)f_0(x_0,p_0)dx_0dp_0\right|\right|_{L^{\infty}(\mathbb{R}^{2d})}\leq e^{dt}||f_0||_{L^{\infty}(\mathbb{R}^{2d})}.$  This equation and  \eqref{fun.est11} provide 
\begin{align*} 
||\Gamma[f](t)||_{L^{\infty}}&\leq e^{dt}||f_0||_{L^{\infty}}+\int_0^t \frac{Ce^{d(1-\frac{1}{k}-\frac{1}{d})(t-s)}}{ (t-s)^{\frac{d}{k}+\frac{1}{2}}}||\,|p|f^2(s)||_{L^k}ds.
\end{align*}
Using $|p|^k\leq (1+|p|)^k$ we estimate 
\begin{align*}
||\,|p|f^2(s)||_{L^k}&\leq ||f(s)||_{L^{\infty}}\left(\int_{\mathbb{R}^{2d}} |p|^k|f(s)|^kdxdp\right)^{\frac{1}{k}}\nonumber \\
&\leq ||f(s)||_{L^{\infty}}\left(\int_{\mathbb{R}^{2d}}(1+ |p|)^k|f(s)|^kdxdp\right)^{\frac{1}{k}}\nonumber \\
&=||f(s)||_{L^{\infty}}||f(s)||_{L^k_1}.
\end{align*}
 The above two estimates imply
\begin{align}\label{infty} 
||\Gamma[f](t)||_{L^{\infty}}&\leq e^{dt}||f_0||_{L^{\infty}}+\int_0^t \frac{Ce^{d(1-\frac{1}{k}-\frac{1}{d})(t-s)}}{ (t-s)^{\frac{d}{k}+\frac{1}{2}}}||f(s)||_{L^{\infty}}||f(s)||_{L^k_1} ds\nonumber\\
& \leq e^{dt}||f_0||_{L^{\infty}}+\int_0^t \frac{Ce^{d(1-\frac{1}{k}-\frac{1}{d})(t-s)}}{ (t-s)^{\frac{d}{k}+\frac{1}{2}}}||f(s)||^2_{\Upsilon}ds\nonumber \\
&\leq e^{dt}||f_0||_{L^{\infty}}+I_1(t) ||f||^2_{\Upsilon_T},
\end{align}
where $$I_1(t)\colonequals \int_0^t \frac{Ce^{d(1-\frac{1}{k}-\frac{1}{d})(t-s)}}{ (t-s)^{\frac{d}{k}+\frac{1}{2}}} ds$$
is finite because of $k>2d.$

We estimate the $L^k_1$-norm  using \eqref{fun1} and \eqref{fun.est} 
$$
||\Gamma[f](t)||_{L^{k}_1}\leq C e^{dt}||f_0||_{L^{k}_1}+\int_0^t \frac{Ce^{d(1-\frac{1}{k}-\frac{1}{d})(t-s)}}{ (t-s)^{\frac{d}{k}+\frac{1}{2}}}||\,|p|f^2(s)||_{L^{k/2}_1}ds.$$
 We estimate 
 \begin{align*}
 ||\,|p|f^2(s)||_{L^{k/2}_1}=\left(\int_{\mathbb{R}^{2d}} (1+|p|)^{k/2}|p|^{k/2}|f(s)|^kdxdp \right)^{\frac{2}{k}}\leq ||f(s)||_{L_1^k}^2.
 \end{align*}
 The above two estimates imply
\begin{align} \label{km}
||\Gamma[f](t)||_{L^{k}_1}&\leq C e^{dt}||f_0||_{L^{k}_1}+\int_0^t \frac{Ce^{d(1-\frac{1}{k}-\frac{1}{d})(t-s)}}{ (t-s)^{\frac{d}{k}+\frac{1}{2}}}||f(s)||_{L^k_1}^{2}ds \nonumber \\
&\leq  C e^{dt}||f_0||_{L^{k}_1}+\int_0^t \frac{Ce^{d(1-\frac{1}{k}-\frac{1}{d})(t-s)}}{ (t-s)^{\frac{d}{k}+\frac{1}{2}}} ||f(s)||_{\Upsilon}^2ds\nonumber \\
&\leq  C e^{dt}||f_0||_{L^{k}_1}+I_2(t) ||f||_{\Upsilon_T}^2,
\end{align}
where   $$I_2(t)\colonequals \int_0^t \frac{Ce^{d(1-\frac{1}{k}-\frac{1}{d})(t-s)}}{ (t-s)^{\frac{d}{k}+\frac{1}{2}}}ds$$
is finite because of $k>2d.$

We estimate the $L^1_1$-norm using  \eqref{fun.est}
\begin{align*} 
||\Gamma[f](t)||_{L^{1}_1}&\leq C e^{dt}||f_0||_{L^{1}_1}+\int_0^t \frac{Ce^{(d-1)(t-s)}}{ (t-s)^{\frac{1}{2}}}||\,|p|f^2(s)||_{L^1_1}ds.
\end{align*}
Using $k>2d\geq 2$ we estimate by interpolation
\begin{align*}
||\,|p|f^2(s)||_{L^1_1}&=\int_{\mathbb{R}^{2d}}(1+|p|)|p|f^2(s,x,p)dxdp
\\
&\leq \int_{\mathbb{R}^{2d}}(1+|p|)^2 f^2(s,x,p)dxdp\\
&\leq \left(\int_{\mathbb{R}^{2d}}(1+|p|)|f(s,x,p)|dxdp\right)^{\frac{k-2}{k-1}}\left(\int_{\mathbb{R}^{2d}}(1+|p|)^k|f(s,x,p)|^kdxdp\right)^{\frac{1}{k-1}}\\
&\leq C||f(s)||^{\frac{k-2}{k-1}}_{L^1_1}||f(s)||_{L^k_1}^{\frac{k}{k-1}}.
\end{align*}
Consequently, we get 
\begin{align}\label{11} 
||\Gamma[f](t)||_{L^{1}_1}&\leq C e^{dt}||f_0||_{L^{1}_1}+\int_0^t \frac{Ce^{(d-1)(t-s)}}{ (t-s)^{\frac{1}{2}}}||f(s)||^{\frac{k-2}{k-1}}_{L^1_1}||f(s)||_{L^k_1}^{\frac{k}{k-1}}ds\nonumber \\
&\leq C e^{dt}||f_0||_{L^{1}_1}+\int_0^t \frac{Ce^{(d-1)(t-s)}}{ (t-s)^{\frac{1}{2}}}||f(s)||_{\Upsilon}^2ds\nonumber \\
&\leq C e^{dt}||f_0||_{L^{1}_1}+I_3(t) ||f||_{\Upsilon_T}^2,
\end{align}
where $I_3(t)\colonequals \int_0^t \frac{Ce^{(d-1)(t-s)}}{ (t-s)^{\frac{1}{2}}}$
is finite.

The above estimates in \eqref{infty}, \eqref{km}, and \eqref{11} show that there are positive constants $C_1(T)\geq 1$ and $C_2(T)>0$ with $C_2(T)\to 0$ as $T\to 0$ such that 
$$||\Gamma[f]||_{\Upsilon_T}\leq C_1(T)||f_0||_{\Upsilon}+C_2(T)||f||^2_{\Upsilon_T}.$$
This shows that $\Gamma$ maps $\Upsilon_T$ to itself.

\textbf{Step 2: Fixed point in $\Upsilon_T$:}  Let $f,g\in \Upsilon_T$.
We compute
\begin{align*}
&||\Gamma[f]-\Gamma[g]||_{\Upsilon_T}\\&=\left|\left|\int_0^t\int_{\mathbb{R}^{2d}}(f(s)+g(s))(f(s)-g(s))p_0\cdot(\nabla_{p_0}G(t-s)-\nabla_{x_0}G(t-s))dx_0dp_0ds\right|\right|_{\Upsilon_T}\\
&\leq \sup_{t\in[0,T]}||f(t)+g(t)||_{L^{\infty}}\left|\left|\int_0^t\int_{\mathbb{R}^{2d}}(f(s)-g(s))p_0\cdot(\nabla_{p_0}G(t-s)-\nabla_{x_0}G(t-s))dx_0dp_0ds\right|\right|_{\Upsilon_T}.
\end{align*}
We can use \eqref{fun.est} and repeat the same arguments in  \eqref{infty}, \eqref{km}, and \eqref{11} to get
\begin{align*}
||\Gamma[f]-\Gamma[g]||_{\Upsilon_T}\leq C_2(T) (||f||_{\Upsilon_T}+||g||_{\Upsilon_T})\left|\left|f-g\right|\right|_{\Upsilon_T}. 
\end{align*}
We choose a small $T>0$ such that
 $$C_2(T)< \frac{1}{4C_1(T)||f_0||_{\Upsilon}}.$$   
 We denote $R\colonequals 2C_1(T)||f_0||_{\Upsilon}$ and $\delta\colonequals 2RC_2(T)\in (0,1)$.   We then have $$||\Gamma[f]||_{\Upsilon_T}<R\,\,\,\,\text{and}\,\,\,\,\,||\Gamma[f]-\Gamma[g]||_{\Upsilon_T}\leq \delta ||f-g||_{\Upsilon_T}$$ for all $f,g\in \Upsilon_T$ with $||f||_{\Upsilon_T}\leq R$ and  $||g||_{\Upsilon_T}\leq R$.  This provides  that there is a unique fixed point $f$ of $\Gamma$ in $\Upsilon_T$, and hence a unique solution to the integral equation in \eqref{Duh} for $t\in [0,T]$. 
 
\textbf{Step 3: Extension to $[0,T_{\text{max}})$:}  We note that $T$ depends on $|| f_0||_{\Upsilon}$. From what we have just proved it follows that if $f$ is a  solution on the interval $[0,T]$ it can be extended on the interval $[0, T+T_1]$ with some $T_1>0$ by defining on $[T, T+T_1],$ $f(t)=f_1(t)$ where $f_1(t)$ is the solution of \begin{align*}
&f_1(t,x,p)=\int_{\mathbb{R}^{2d}}G(t-T,x,p,x_0,p_0)f(T, x_0,p_0)dx_0dp_0\nonumber\\
&+\int_T^t\int_{\mathbb{R}^{2d}}f_1^2(s,x_0,p_0)p_0\cdot(\nabla_{p_0}G(t-s,x,p,x_0,p_0)-\nabla_{x_0}G(t-s,x,p,x_0,p_0))dx_0dp_0ds.
\end{align*}
Moreover, $T_1 $ depends on $|| f(T)||_{\Upsilon}.$ We can continue this process and extend the existence time interval.  
Let $[0, T_{\mathrm{max}})$ be the maximal interval of  existence of the solution. If $T_{\mathrm{max}}<\infty$ then at least one of the limits
$$\lim_{t\nearrow T_{\mathrm{max}}} ||f(t)||_{L^{\infty}},\,\,\,\,\lim_{t\nearrow T_{\mathrm{max}}} ||f(t)||_{L^{1}_1}, \,\,\,\text{and}\,\,\,\lim_{t\nearrow T_{\mathrm{max}}} ||f(t)||_{L^{k}_1}$$
is infinite. Otherwise there is a sequence $T_n\nearrow T_{\mathrm{max}},$ $n\in \mathbb{N},$ such that $||f(t_n)||_{ \Upsilon}$  is bounded. This would imply what we have just proved that for each $T_n,$ near enough to $T_{\mathrm{max}},$ the solution $f$ on the interval $[0,T_n]$ can be extended to the interval $[0, T_n+\theta],$ where $\theta >0$ is independent of $T_n$ and  hence $f$ can be extended beyond $T_{\mathrm{max}}. $ This contradicts the definition of $T_{\mathrm{max}}.$
\end{proof}

\subsection{Regularity}
\begin{theorem}\label{th:reg}
Let $k>2d$, $\Upsilon \colonequals L^{\infty}(\mathbb{R}^{2d})\cap L^{1}_1(\mathbb{R}^{2d})\cap L^{k}_1(\mathbb{R}^{2d})$ with the norm in \eqref{norm}, and  $f_0\in \Upsilon$. Let $f_{\varepsilon}\in C([0,T_{\mathrm{max}}); \Upsilon)$ with $f_{\varepsilon}(0,x,p)=f_0(x,p)$  be the solution  to \eqref{Eq}  in Theorem \ref{th:loc}.  Then $t^{\frac{1}{2}}|\nabla_x f_{\varepsilon}(t,x,p)| $ and $t^{\frac{1}{2}}|\nabla_p f_{\varepsilon}(t,x,p)| $ belong to $L^{\infty}((0,T); L^k_1\cap L^1_1(\mathbb{R}^{2d}))$ for all $T<T_{\mathrm{max}}$. Moreover, $f_{\varepsilon}$ is in $C^{\infty}((0,T_{\mathrm{max}})\times \mathbb{R}^{2d}).$
\end{theorem}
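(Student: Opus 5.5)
We plan to differentiate the Duhamel formula \eqref{Duh} and close a weighted Gronwall-type (singular Volterra) estimate for the quantity
$$M(t)\colonequals \sup_{0<s\le t} s^{\frac12}\big(\|\nabla_x f_\varepsilon(s)\|_{L^k_1\cap L^1_1}+\|\nabla_p f_\varepsilon(s)\|_{L^k_1\cap L^1_1}\big).$$
For the linear term $\mathcal{G}[f_0]$ we apply \eqref{x-der} and \eqref{p-der} with $q=k$ and with $q=k=1$. This gives $t^{\frac12}\|\nabla_{x,p}\mathcal{G}[f_0](t)\|_{L^k_1\cap L^1_1}\le C(T)\|f_0\|_{\Upsilon}$ on $(0,T]$.

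For the nonlinear term we use \eqref{x_i-der} and \eqref{p_i-der} with $w=p_0 f_\varepsilon^2(s)$. These move the derivative onto $w$, with the harmless factors $1-e^{t-s}$ and $e^{t-s}$ bounded for $t\le T$. We then have $\partial_{x_{0,i}}w=2p_0f_\varepsilon\partial_{x_{0,i}}f_\varepsilon$ and $\partial_{p_{0,i}}w=e_if_\varepsilon^2+2p_0f_\varepsilon\partial_{p_{0,i}}f_\varepsilon$. We apply Lemma \ref{lem2} (estimate \eqref{fun.est}) with $q=k$ in the $L^k_1$ case. Since $f_\varepsilon\in L^\infty$, this gives
$$\|p f\nabla f\|_{L^k_1}\le \|(1+|p|)f\|_{L^\infty}\|\nabla f\|_{L^k_1}.$$

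This is the main obstacle: $\|\,|p|f\|_{L^\infty}$ is not controlled by $\Upsilon$. I would handle it by using the mixed exponent instead. Take $q<k$ in \eqref{fun.est} and put the weight on $f$ via Hölder:
$$\|(1+|p|)|p|f\nabla f\|_{L^{q}}\le \|(1+|p|)f\|_{L^{k}}\,\|(1+|p|)\nabla f\|_{L^{k}},\qquad \tfrac1q=\tfrac2k .$$
This costs the time singularity $(t-s)^{-\frac dk-\frac12}$, which is integrable since $k>2d$. Thus the same condition that made $I_1,I_2$ finite in Theorem \ref{th:loc} is what saves us here.

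For the $L^1_1$ norm we proceed analogously. We bound $\|(1+|p|)|p|f\nabla f\|_{L^1}$ by Cauchy--Schwarz or Hölder, combining $\|(1+|p|)f\|_{L^{k'}}$ (controlled through interpolation between $L^1_1$ and $L^k_1$ as in the proof of \eqref{11}) with $\|(1+|p|)\nabla f\|$ in $L^1_1\cap L^k_1$. If the weight balance fails, I would fall back to $q=k'$-type splittings and again rely on $k>2d$. The term $f^2$ from $\partial_{p_0}p_0$ is harmless.

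Altogether we obtain
$$M(t)\le C(T)+C(T)\,\|f\|_{\Upsilon_T}\,\sup_t t^{\frac12}\int_0^t (t-s)^{-\frac dk-\frac12}s^{-\frac12}\,ds\; M(t).$$
The Beta integral is $Ct^{\frac12-\frac dk}$, which is small for small $t$. So we first close the bound on a short interval, rigorously via the fixed point of Theorem \ref{th:loc} run in the space with the extra weighted-gradient norm, so that a priori finiteness is automatic. We then iterate on subintervals up to any $T<T_{\max}$, restarting from times where the gradient is already bounded, using $\sup_{[0,T]}\|f\|_\Upsilon<\infty$.

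For $C^\infty$ we bootstrap. Repeat the argument on $[t_0,T]$ with initial datum $f(t_0)$, which now has one derivative in $L^k_1$, to get second derivatives with weight $(t-t_0)^{\frac12}$, and continue inductively; time derivatives then follow from the equation. Alternatively, once $f_\varepsilon$ is locally Hölder, we invoke standard interior Schauder theory for the uniformly parabolic operator in \eqref{Eq} whose coefficients are smooth in $p$, together with the smooth nonlinearity $f-f^2$, to get $C^\infty$ on $(0,T_{\max})\times\mathbb{R}^{2d}$.
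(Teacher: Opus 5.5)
Your proposal follows essentially the same route as the paper. You differentiate the Duhamel formula, move the derivatives onto $w=p_0f_\varepsilon^2$ via \eqref{x_i-der}--\eqref{p_i-der}, and apply \eqref{fun.est} with $q=k/2$ (where $k>2d$ makes $(t-s)^{-d/k-1/2}$ integrable) and with $q=k=1$ plus interpolation for the $L^1_1$ part. You then close a singular Volterra inequality and invoke parabolic regularity for smoothness, as the paper does.

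Your deviations are harmless: you treat $\nabla_x$ and $\nabla_p$ jointly, you keep the factors $e^{t-s}$ and $1-e^{t-s}$ inside the time integral, and you close by short-time smallness plus restarting instead of the singular Gronwall lemma the paper cites. Your fixed point in the augmented weighted-gradient space also supplies the a priori finiteness of the gradient norms, which the paper's Gronwall step takes for granted.
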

\begin{proof} We omit the dependence of the solution $f\colonequals f_{\varepsilon}$ on $\varepsilon$ for notational convenience.  We divide the proof into three parts. In the following $T>0$ denotes  any constant such that $T<T_{\mathrm{max}}.$  We use $C$ to denote positive constants which can vary from one line to another.
\\[2mm]
\textbf{Part 1: Estimates on $\nabla_x f$:} 
Using \eqref{x_i-der} we take  the gradient with respect to $x$ in the integral equation 
\begin{align}\label{x-derInt}
& \nabla_x f(t,x,p)=\nabla_x \int_{\mathbb{R}^{2d}}G(t,x,p,x_0,p_0)f_0(x_0,p_0)dx_0dp_0\nonumber\\
&+\int_0^t\int_{\mathbb{R}^{2d}}2f(s,x_0,p_0)\nabla_{x_0}f(s,x_0,p_0)p_0\cdot(\nabla_{p_0}-\nabla_{x_0})G(t-s,x,p,x_0,p_0)dx_0dp_0ds.
\end{align}
Let us denote 
$$\mathcal{X}(t)\colonequals \max\{||\nabla_x f(t)||_{L^k_1}, ||\nabla_x f(t)||_{L^1_1}\}$$
for $t>0$. Equation \eqref{x-der} implies
\begin{equation}\label{x1}
\left|\left| \nabla_x \int_{\mathbb{R}^{2d}} G(t,x,p,x_0,p_0)f_0(x_0,p_0)dx_0dp_0\right|\right|_{L^k_1}\leq \frac{C e^{(d-1)t}}{ t^{\frac{1}{2}}}||f||_{L^k_1}.
\end{equation} 
 We use \eqref{fun.est} (with $q=k/2$) to  estimate the second term on the right hand side of \eqref{x-derInt}
 \begin{align}\label{x11}
 \left|\left|\int_0^t\int_{\mathbb{R}^{2d}}2f(s,x_0,p_0)\nabla_{x_0}f(s,x_0,p_0)p_0\cdot(\nabla_{p_0}-\nabla_{x_0})G(t-s,x,p,x_0,p_0)dx_0dp_0ds \right|\right|_{L^k_1}\nonumber \\
 \leq \int_0^t\frac{Ce^{d(1-\frac{1}{k}-\frac{1}{d})(t-s)}}{ (t-s)^{\frac{d}{k}+\frac{1}{2}}}||\,|p|f(s)|\nabla_x f(s)|\,||_{L^{k/2}_1}.
\end{align} 
Using $(1+|p|)^{k/2}|p|^{k/2}\leq (1+|p|)^k$ and the H\"older inequality, we estimate 
\begin{align*}
||f(s)|p|\,|\nabla_x f(s)|\,||_{L^{k/2}_1}&=\left(\int_{\mathbb{R}^{2d}}(1+|p|)^{k/2}|p|^{k/2}|f(s,x,p)|^{k/2}|\nabla_xf(s,x,p)|^{k/2}dxdp\right)^{2/k}\\
&\leq \left(\int_{\mathbb{R}^{2d}}(1+|p|)^{k}|f(s,x,p)|^{k/2}|\nabla_xf(s,x,p)|^{k/2}dxdp\right)^{2/k}\\
&\leq  ||f(s)||_{L_1^k(\mathbb{R}^{2d})}||\nabla_x f(s)||_{L_1^k(\mathbb{R}^{2d})}.
\end{align*}
These estimates show 
 \begin{align}\label{x2}
 \left|\left|\int_0^t\int_{\mathbb{R}^{2d}}2f(s,x_0,p_0)\nabla_{x_0}f(s,x_0,p_0)p_0\cdot(\nabla_{p_0}-\nabla_{x_0})G(t-s,x,p,x_0,p_0)dx_0dp_0ds \right|\right|_{L^k_1}\nonumber \\
 \leq ||f||_{\Upsilon_T}\int_0^t\frac{Ce^{d(1-\frac{1}{k}-\frac{1}{d})(t-s)}}{ (t-s)^{\frac{d}{k}+\frac{1}{2}}}  ||\nabla_x f(s)||_{L^k_1}ds.
\end{align}
Equations \eqref{x1} and \eqref{x2} imply 
\begin{align}\label{x33}
 ||\nabla_x f(t)||_{L^k_1} &\leq \frac{Ce^{(d-1)t}}{ t^{\frac{1}{2}}}||f_0||_{L^k_1}+||f||_{\Upsilon_T}\int_0^t\frac{Ce^{d(1-\frac{1}{k}-\frac{1}{d})(t-s)}}{ (t-s)^{\frac{d}{k}+\frac{1}{2}}}  ||\nabla_x f(s)||_{L^k_1}ds\nonumber\\
 &\leq  \frac{Ce^{(d-1)t}}{ t^{\frac{1}{2}}}||f_0||_{L^k_1}+||f||_{\Upsilon_T}\int_0^t\frac{Ce^{d(1-\frac{1}{k}-\frac{1}{d})(T-s)}}{ (t-s)^{\frac{d}{k}+\frac{1}{2}}}  \mathcal{X}(s)ds
\end{align}

We now estimate the $L^1_1$-norm of $\nabla_x f$: Equation \eqref{x1}   holds with $k=1.$  We use \eqref{fun.est} (with $k=q=1$) to  estimate the second term on the right hand side of \eqref{x-derInt}
\begin{align*}
 \left|\left|\int_0^t\int_{\mathbb{R}^{2d}}2f(s,x_0,p_0)\nabla_{x_0}f(s,x_0,p_0)p_0\cdot(\nabla_{p_0}-\nabla_{x_0})G(t-s,x,p,x_0,p_0)dx_0dp_0ds \right|\right|_{L^1_1}\nonumber \\
 \leq \int_0^t\frac{Ce^{(d-1)(t-s)}}{ (t-s)^{\frac{1}{2}}}||\,|p|f(s)|\nabla_x f(s)|\,||_{L^{1}_1}.
\end{align*} 
This leads to estimate
 \begin{align*}
 ||\, |p|f(s)|\nabla_x f(s)|\,||_{L^1_1}&\leq C||f(s)||_{L^1_1}^{\frac{k-2}{2(k-1)}}||f(s)||_{L^k_1}^{\frac{k}{2(k-1)}}||\nabla_x f(s)||_{L^1_1}^{\frac{k-2}{2(k-1)}}||\nabla_xf(s)||_{L_1^k}^{\frac{k}{2(k-1)}}\\
 &\leq ||f||_{\Upsilon_T}\mathcal{X}(s),
 \end{align*}
where we used the (extended) H\"older inequality \cite[Section 4.2, Remark 2]{Bre}. The above estimate and \eqref{x11} provide
\begin{align}\label{xL^1_1}
 \left|\left|\int_0^t\int_{\mathbb{R}^{2d}}2f(s,x_0,p_0)\nabla_{x_0}f(s,x_0,p_0)p_0\cdot(\nabla_{p_0}-\nabla_{x_0})G(t-s,x,p,x_0,p_0)dx_0dp_0ds \right|\right|_{L^1_1}\nonumber \\
 \leq ||f||_{\Upsilon_T} \int_0^t\frac{Ce^{(d-1)(t-s)}}{ (t-s)^{\frac{1}{2}}}\mathcal{X}(s)ds.
\end{align} 
Equations \eqref{x-derInt}, \eqref{x1} with $k=1$, and the above equation provide 
\begin{align*}
||\nabla_x f(t)||_{L^1_1}&\leq \frac{Ce^{(d-1)t}}{ t^{\frac{1}{2}}}||f_0||_{L^1_1}+||f||_{\Upsilon_T}\int_0^t\frac{Ce^{(d-1)(t-s)}}{ (t-s)^{\frac{1}{2}}}  \mathcal{X}(s)ds\\
 & \leq \frac{Ce^{(d-1)t}}{ t^{\frac{1}{2}}}||f_0||_{L^1_1}+||f||_{\Upsilon_T}\int_0^t\frac{Ce^{(d-1)(T-s)}}{ (t-s)^{\frac{1}{2}}}  \mathcal{X}(s)ds.
\end{align*}
This estimate and \eqref{x33} imply
\begin{align*}
\mathcal{X}(t)\leq \frac{Ce^{(d-1)t}}{ t^{\frac{1}{2}}}\max\{||f_0||_{L^1_1}, ||f_0||_{L^k_1}\}+||f||_{\Upsilon_T}\int_0^t\frac{Ce^{(d-1)(T-s)}}{\min\{ (t-s)^{\frac{1}{2}}, \,(t-s)^{\frac{d}{k}+\frac{1}{2}}\}}  \mathcal{X}(s)ds.
\end{align*}
We apply \cite[Lemma 2.5]{Lin} and obtain
\begin{align*}
& \mathcal{X}(t)\leq \frac{Ce^{(d-1)t}}{ t^{\frac{1}{2}}}\max\{||f_0||_{L^1_1}, ||f_0||_{L^k_1}\}+||f||_{\Upsilon_T} \int_0^t\frac{Ce^{(d-1)T} \mathcal{X}(0)}{ s^{\frac{1}{2}} \min\{ (t-s)^{\frac{1}{2}}, \,(t-s)^{\frac{d}{k}+\frac{1}{2}}\}}  ds.
\end{align*}
 \cite[Lemma 3.13]{Her} shows that $$\int_0^t\frac{1}{ s^{\frac{1}{2}} \min\{ (t-s)^{\frac{1}{2}}, \,(t-s)^{\frac{d}{k}+\frac{1}{2}}\}}  ds\leq \frac{C}{t^{\frac{d}{k}}},\,\,\,\, \forall\, t\in (0,T].$$    
The above inequalities show
\begin{align}\label{x3}
 t^{\frac{1}{2}}\mathcal{X}(t)\leq Ce^{(d-1)t}\max\{||f_0||_{L^1_1}, ||f_0||_{L^k_1}\}+||f||_{\Upsilon_T} Ce^{(d-1)T} \mathcal{X}(0) t^{\frac{1}{2}-\frac{d}{k}}.
\end{align}
Because of   $\frac{1}{2}-\frac{d}{k}>0$ we conclude that $t^{\frac{1}{2}}\mathcal{X}(t)$ is bounded for all $t\in [0,T].$
\\[2mm]
{\textbf{Part 2: Estimates on $\nabla_p f$:}}
Next we take the gradient with respect to $p$ in the integral equation using \eqref{p_i-der} 
\begin{align}\label{p0}
& \nabla_p f(t,x,p)=\nabla_p \int_{\mathbb{R}^{2d}}G(t,x,p,x_0,p_0)f_0(x_0,p_0)dx_0dp_0\nonumber\\
&+2(1-e^t)\int_0^t\int_{\mathbb{R}^{2d}}f(s,x_0,p_0)\nabla_{x_0}f(s,x_0,p_0)p_0\cdot(\nabla_{p_0}-\nabla_{x_0})G(t-s,x,p,x_0,p_0)dx_0dp_0ds\nonumber\\
&+2e^t\int_0^t\int_{\mathbb{R}^{2d}}f(s,x_0,p_0)\nabla_{p_0}f(s,x_0,p_0)p_0\cdot(\nabla_{p_0}-\nabla_{x_0}) G(t-s,x,p,x_0,p_0)dx_0dp_0ds
\nonumber \\
&+de^t\int_0^t\int_{\mathbb{R}^{2d}} f^2(s,x_0,p_0)(\nabla_{p_0}-\nabla_{x_0})G(t-s,x,p,x_0,p_0) dx_0dp_0ds.
\end{align}
Let us denote 
$$\mathcal{Y}(t)\colonequals \max\{||\nabla_pf(t)||_{L^k_1}, ||\nabla_p f(t)||_{L^1_1}\}.$$
We estimate the $L^k_1$-norm of $|\nabla_pf(t)|$:
The first term on the right hand side of \eqref{p0} can be estimated using \eqref{p-der}
\begin{equation}\label{p1}
\left|\left| \nabla_p \int_{\mathbb{R}^{2d}} G(t,x,p,x_0,p_0)f_0(x_0,p_0)dx_0dp_0\right|\right|_{L^k_1}\leq \frac{Ce^{d t}}{ t^{\frac{1}{2}}}||f_0||_{L^k_1}.
\end{equation}
Equations \eqref{x2}, the fact that  $s^{\frac{1}{2}}||\nabla_x f(s)||_{L^k_1}$  is bounded by a constant because of \eqref{x3}, and \cite[Lemma 3.13]{Her} provide 
\begin{align}\label{p22}
2(e^t-1)& \left|\left|\int_0^t\int_{\mathbb{R}^{2d}}f(s,x_0,p_0)\nabla_{x_0}f(s,x_0,p_0)p_0\cdot(\nabla_{p_0}-\nabla_{x_0})G(t-s,x,p,x_0,p_0)dx_0dp_0ds \right|\right|_{L^k_1}\nonumber \\
 &\leq ||f||_{\Upsilon_T}(e^t-1) \int_0^t\frac{Ce^{d(1-\frac{1}{k}-\frac{1}{d})(t-s)}}{ (t-s)^{\frac{d}{k}+\frac{1}{2}}}  ||\nabla_x f(s)||_{L^k_1}ds\nonumber\\
&\leq ||f||_{\Upsilon_T}(e^t-1)\int_0^t\frac{Ce^{d(1-\frac{1}{k}-\frac{1}{d})(T-s)}}{ s^{\frac{1}{2}}(t-s)^{\frac{d}{k}+\frac{1}{2}}}  ds\nonumber \\
&\leq C||f||_{\Upsilon_T}(e^t-1)t^{-\frac{d}{k}}.
\end{align}
The third terms on the right hand side of \eqref{p0} can be estimated similarly to \eqref{x2}
\begin{align}\label{p2}
\left|\left|2e^t\int_0^t\int_{\mathbb{R}^{2d}}f(s,x_0,p_0)\nabla_{p_0}f(s,x_0,p_0)p_0\cdot(\nabla_{p_0}-\nabla_{x_0})G(t-s,x,p,x_0,p_0)dx_0dp_0ds \right|\right|_{L^k_1}\nonumber \\
 \leq ||f||_{\Upsilon_T}e^t\int_0^t\frac{Ce^{d(1-\frac{1}{k}-\frac{1}{d})(t-s)}}{ (t-s)^{\frac{d}{k}+\frac{1}{2}}}  ||\nabla_p f(s)||_{L^k_1}ds.
\end{align}
Using \eqref{fun.est} we estimate the last term on the right hand side of this equation
\begin{align*}
&\left|\left|de^t\int_0^t\int_{\mathbb{R}^{2d}} f^2(s,x_0,p_0)(\nabla_{p_0}-\nabla_{x_0})G(t-s,x,p,x_0,p_0) dx_0dp_0ds\right|\right|_{L^k_1}\\
&\leq  e^t\int_0^t\frac{C e^{(d-1)(t-s)}}{ (t-s)^{\frac{1}{2}}}||f^2(s)||_{L^k_1}ds.
\end{align*}
We can estimate $$||f^2(s)||_{L^k_1}\leq ||f(s)||_{L^{\infty}}||f(s)||_{L^k_1}\leq ||f||_{\Upsilon_T}^2 $$
and $$\int_0^t \frac{ e^{(d-1)(t-s)}}{ (t-s)^{\frac{1}{2}}}ds\leq e^{(d-1)t} \int_0^t\frac{1}{ (t-s)^{\frac{1}{2}}}ds=2e^{(d-1)t}t^{\frac{1}{2}}.$$
These estimates provide 
\begin{align}\label{p3}
\left|\left|de^t\int_0^t\int_{\mathbb{R}^{2d}} f^2(s,x_0,p_0)(\nabla_{p_0}-\nabla_{x_0})G(t-s,x,p,x_0,p_0) dx_0dp_0ds\right|\right|_{L^k_1}\leq  C||f||^2_{\Upsilon_T}e^{dt}t^{\frac{1}{2}}.
\end{align}
Equations \eqref{p0}-\eqref{p3} show 
\begin{align}\label{pL^k_m}
||\nabla_p f(t)||_{L^k_1}\leq & \frac{Ce^{d t}}{t^{\frac{1}{2}}}||f_0||_{L^k_m}+C||f||_{\Upsilon_T}(e^t-1)t^{-\frac{d}{k}}+C||f||^2_{\Upsilon_T}e^{dt}t^{\frac{1}{2}}\nonumber \\
& +||f||_{\Upsilon_T}e^T\int_0^t\frac{Ce^{d(1-\frac{1}{k}-\frac{1}{d})(T-s)}}{ (t-s)^{\frac{d}{k}+\frac{1}{2}}}  ||\nabla_p f(s)||_{L^k_1}ds.
\end{align}

Next we estimate the $L^1_1$-norm of $|\nabla_p f|:$ Equation \eqref{p1} holds with $k=1.$
Equation \eqref{xL^1_1} and the fact that $s^{\frac{1}{2}}\mathcal{X}(s)$ is bounded because of \eqref{x3}, and \cite[Lemma 3.13]{Her}  imply 
\begin{align*}
2(e^t-1)& \left|\left|\int_0^t\int_{\mathbb{R}^{2d}}f(s,x_0,p_0)\nabla_{x_0}f(s,x_0,p_0)p_0\cdot(\nabla_{p_0}-\nabla_{x_0})G(t-s,x,p,x_0,p_0)dx_0dp_0ds \right|\right|_{L^1_1}\nonumber \\
 &\leq ||f||_{\Upsilon_T}(e^t-1) \int_0^t\frac{Ce^{(d-1)(t-s)}}{ s^{\frac{1}{2}}(t-s)^{\frac{1}{2}}} ds\nonumber\\
&\leq C ||f||_{\Upsilon_T}(e^t-1).
\end{align*}
The third terms on the right hand side of \eqref{p0} can be estimated similarly to \eqref{xL^1_1}
\begin{align*}
\left|\left|2e^t\int_0^t\int_{\mathbb{R}^{2d}}f(s,x_0,p_0)\nabla_{p_0}f(s,x_0,p_0)p_0\cdot(\nabla_{p_0}-\nabla_{x_0})G(t-s,x,p,x_0,p_0)dx_0dp_0ds \right|\right|_{L^1_1}\nonumber \\
 \leq ||f||_{\Upsilon_T}e^t\int_0^t\frac{Ce^{(d-1)(t-s)}}{ (t-s)^{\frac{1}{2}}}  \mathcal{Y}(s)ds.
\end{align*}
Equation \eqref{p3} holds for $k=1$
\begin{align*}
\left|\left|de^t\int_0^t\int_{\mathbb{R}^{2d}} f^2(s,x_0,p_0)(\nabla_{p_0}-\nabla_{x_0})G(t-s,x,p,x_0,p_0) dx_0dp_0ds\right|\right|_{L^1_1}\leq  C||f||^2_{\Upsilon_T}e^{dt}t^{\frac{1}{2}}.
\end{align*}
Equation \eqref{p0} and the above estimates provide
\begin{align*}
||\nabla_p f(t)||_{L^1_1}\leq & \frac{Ce^{d t}}{t^{\frac{1}{2}}}||f_0||_{L^1_1}+C||f||_{\Upsilon_T}(e^t-1)+C||f||^2_{\Upsilon_T}e^{dt}t^{\frac{1}{2}}\\
& +||f||_{\Upsilon_T}e^T\int_0^t\frac{Ce^{(d-1)(T-s)}}{ (t-s)^{\frac{1}{2}}}  \mathcal{Y}(s)ds.
\end{align*}
This estimate and \eqref{pL^k_m} yield 
\begin{align*}
\mathcal{Y}(t)\leq & \frac{Ce^{d t}}{t^{\frac{1}{2}}}||f_0||_{\Upsilon}+C||f||_{\Upsilon_T}(e^t-1)t^{-\frac{d}{k}}+C||f||^2_{\Upsilon_T}e^{dt}t^{\frac{1}{2}}\\
& +||f||_{\Upsilon_T}e^T\int_0^t\frac{Ce^{(d-1)(T-s)}}{ (t-s)^{\frac{1}{2}}}  \mathcal{Y}(s)ds.
\end{align*}
We apply \cite[Lemma 2.5]{Lin} to obtain
\begin{align*}
&\mathcal{Y}(t)\leq  \frac{Ce^{d t}}{t^{\frac{1}{2}}}||f_0||_{\Upsilon}+C||f||_{\Upsilon_T}(e^t-1)t^{-\frac{d}{k}}+C||f||^2_{\Upsilon_T}e^{dt}t^{\frac{1}{2}}\\
& +||f||_{\Upsilon_T}e^T\int_0^t\frac{Ce^{(d-1)(T-s)}}{ (t-s)^{\frac{d}{k}+\frac{1}{2}}}  \left(\frac{e^{d s}}{s^{\frac{1}{2}}}||f_0||_{L^k_1}+C||f||_{\Upsilon_T}(e^s-1)s^{-\frac{d}{k}}+C||f||^2_{\Upsilon_T}e^{ds}s^{\frac{1}{2}}\right)ds.
\end{align*}
Using \cite[Lemma 3.13]{Her}  one can show that  
$$t^{\frac{1}{2}}\int_0^t\frac{Ce^{(d-1)(T-s)}}{ (t-s)^{\frac{d}{k}+\frac{1}{2}}}  \left(\frac{e^{d s}}{s^{\frac{1}{2}}}||f_0||_{L^k_1}+C||f||_{\Upsilon_T}(e^s-1)s^{-\frac{d}{k}}+C||f||^2_{\Upsilon_T}e^{ds}s^{\frac{1}{2}}\right)ds$$
is bounded by a positive constant. Therefore, we obtain that $t^{\frac{1}{2}}\mathcal{Y}(t)$ is bounded for all $t\in [0,T].$

\textbf{Part 3: Smoothness:} Equation \ref{Eq} is uniformly parabolic (with $\varepsilon\Delta_x$) with smooth coefficients, the solution $f(t,x,p)$ is in $L^{\infty}([0,T]\times\mathbb{R}^{2d} )$ for every $0<T<T_{\mathrm{max}}$, and  the nonlinearity is smooth and bounded (since $f$ is bounded).  Thus, the solution $f(t,x,p)$ is smooth for all $t>0$ and $(x,p)\in \mathbb{R}^{2d}$ by the regularity results from \cite[Chapter 5, Sections 8 and 10]{Lad} for  quasilinear parabolic equations. 
\end{proof}


\subsection{Mass conservation}
 Let $\zeta:[0,\infty)\to \mathbb{R}$ be a non-increasing function in $ C^{\infty}((0,\infty))$ such that $\zeta(s)=1$ for $s\in [0,1]$ and $\zeta(s)=0$ for $s\geq 2.$  We define
 \begin{equation}\label{zeta}
 \zeta_{\sigma}(x,p)\colonequals \zeta \big(\sigma \sqrt{|x|^2+|p|^2}\big) ,\, \, (x,p)\in \mathbb{R}^{2d}, \, \sigma>0.
 \end{equation}
 We note that $\zeta_{\sigma}(x, p)=1$ for $\sqrt{|x|^2+|p|^2}\leq 1/\sigma$ and $\zeta_{\sigma}(x,p)=0$ for $\sqrt{|x|^2+|p|^2}\geq 2/\sigma.$ There is a constant $C>0$ independent of $\sigma$ such that 
 \begin{align}\label{gr.z}
 |\nabla_{x,p}\zeta_{\sigma}(x,p)|&\leq C\sigma,\nonumber \\ |\Delta_{x,p} \zeta_{\sigma}(x, p)|&\leq C\sigma^2,\nonumber \\ \sqrt{|x|^2+|p|^2}(|\nabla_{x}\zeta_{\sigma}(x, p)|+|\nabla_{p}\zeta_{\sigma}(x,p)|)&\leq C,\nonumber \\ |\Delta_{x,p} \zeta_{\sigma}(x, p)|&\leq C
 \end{align}
  for all  $(x,p)\in \mathbb{R}^{2d}.$
\begin{lemma}\label{l:mass}
Let $k>2d$,   $\Upsilon \colonequals L^{\infty}(\mathbb{R}^{2d})\cap L^{1}_1(\mathbb{R}^{2d})\cap L^{k}_1(\mathbb{R}^{2d})$ with the norm in \eqref{norm}, and  $f_0\in \Upsilon$. Let $f_{\varepsilon}\in C([0,T_{\mathrm{max}}); \Upsilon)$ with $f_{\varepsilon}(0,x,p)=f_0(x,p)$   be the solution  to \eqref{Eq}  in Theorem \ref{th:loc}.   Then we have mass conservation
\begin{equation*}
\int_{\mathbb{R}^{2d}}f_{\varepsilon}(t,x,p)dpdx=\int_{\mathbb{R}^{2d}}f_0(x,p)dpdx, \, \, \, \, \forall\, t\in [0, T_{\mathrm{max}}).
\end{equation*}
\end{lemma}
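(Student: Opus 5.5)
We multiply \eqref{Eq} by the cutoff $\zeta_\sigma$ from \eqref{zeta}, integrate over $[t_1,t]\times\mathbb{R}^{2d}$ with $0<t_1<t<T_{\mathrm{max}}$, and let $\sigma\to0$. Then let $t_1\to0$ using $f_\varepsilon\in C([0,T_{\mathrm{max}});\Upsilon)$. By Theorem \ref{th:reg}, $f_\varepsilon$ is smooth on $(0,T_{\mathrm{max}})\times\mathbb{R}^{2d}$, so for $t_1>0$ the equation holds pointwise. Since $\zeta_\sigma$ has compact support, every integration by parts is legitimate. This gives
\begin{align*}
\int_{\mathbb{R}^{2d}}f_\varepsilon(t)\zeta_\sigma\,dxdp-\int_{\mathbb{R}^{2d}}f_\varepsilon(t_1)\zeta_\sigma\,dxdp
=\int_{t_1}^t\int_{\mathbb{R}^{2d}}\Big\{(f_\varepsilon-f_\varepsilon^2)\,p\cdot(\nabla_x\zeta_\sigma-\nabla_p\zeta_\sigma)+f_\varepsilon\,(\Delta_p\zeta_\sigma+\varepsilon\Delta_x\zeta_\sigma)\Big\}dxdpds,
\end{align*}
where the diffusion terms have been moved twice onto $\zeta_\sigma$.

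The remaining task is to show that the right-hand side vanishes as $\sigma\to0$. The bounds \eqref{gr.z} give $|\Delta_{x,p}\zeta_\sigma|\le C\sigma^2$ and $|p|\,|\nabla_{x,p}\zeta_\sigma|\le C$. All derivatives of $\zeta_\sigma$ are supported in the annulus $A_\sigma=\{1/\sigma\le\sqrt{|x|^2+|p|^2}\le 2/\sigma\}$. The integrand is therefore bounded by
\[
C\big(|f_\varepsilon|+|f_\varepsilon|^2\big)\mathbf 1_{A_\sigma}\le C\big(1+\|f_\varepsilon\|_{L^\infty}\big)|f_\varepsilon|\mathbf 1_{A_\sigma}.
\]
We have $f_\varepsilon\in C([0,T];\Upsilon)$ for every $T<T_{\mathrm{max}}$. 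Hence $\sup_{s\le t}\|f_\varepsilon(s)\|_{L^\infty}$ is finite and $f_\varepsilon\in L^1((0,t)\times\mathbb{R}^{2d})$. Since $\mathbf 1_{A_\sigma}\to0$ pointwise, dominated convergence makes the right-hand side tend to $0$.

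On the left-hand side, $\zeta_\sigma\to1$ monotonically and $f_\varepsilon(t),f_\varepsilon(t_1)\in L^1$, so dominated convergence again applies. We obtain $\int f_\varepsilon(t)=\int f_\varepsilon(t_1)$. Finally, let $t_1\to0$. Continuity of $f_\varepsilon$ in $\Upsilon\subset L^1_1\subset L^1$ at $t=0$ gives $\int f_\varepsilon(t_1)\to\int f_0$.

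The main obstacle is the transport term $p\cdot\nabla_x(f-f^2)$, which carries an unbounded weight $|p|$. This is handled precisely by the estimate $\sqrt{|x|^2+|p|^2}\,|\nabla\zeta_\sigma|\le C$ in \eqref{gr.z}; the factor $\sigma$ from $|\nabla\zeta_\sigma|$ cancels against $|p|\le 2/\sigma$ on the support.

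If one prefers to avoid using smoothness up to small times, an alternative is to test the mild formulation \eqref{Duh} directly. One would use $\int G\,dxdp$ in the $(x,p)$ variables and $\int(\nabla_{p_0}-\nabla_{x_0})G\,dxdp=0$. The cutoff argument above is simpler and is the route I would follow.
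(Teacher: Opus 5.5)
Your proof is correct and follows essentially the same route as the paper: test \eqref{Eq} with $\zeta_\sigma$, integrate in time, and let $\sigma\to0$ by dominated convergence using the bounds \eqref{gr.z}, in particular $|p|\,|\nabla_{x,p}\zeta_\sigma|\le C$. The one difference is that you integrate from $t_1>0$ and then let $t_1\to0$ using continuity in $\Upsilon$. This is a slightly more careful treatment than the paper's direct integration over $[0,T]$, since smoothness is only known for $t>0$.
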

\begin{proof}  We omit the dependence of the solution $f\colonequals f_{\varepsilon}$ on $\varepsilon$ for notational convenience. By Theorem \ref{th:reg} the solution $f(t,x,p)$ is smooth and satisfies \eqref{Eq} in the classical sense  for all $(x,p)\in \mathbb{R}^{2d}$ and $t\in (0, T_{\mathrm{max}})$.
We multiply \eqref{Eq} by $\zeta_{\sigma}$ and integrate by parts
\begin{align*}
\frac{d}{dt}\int_{\mathbb{R}^{2d}}\zeta_{\sigma}fdpdx=&\int_{\mathbb{R}^{2d}} (\Delta_p \zeta_{\sigma} f-\nabla_p \zeta_{\sigma}\cdot p(f- f^2))dpdx\\
&+\int_{\mathbb{R}^{2d}} (\varepsilon \Delta_x \zeta_{\sigma} f+\nabla_x \zeta_{\sigma}\cdot p(f- f^2))dpdx.
\end{align*}
We integrate this equation on the interval $[0,T]$ for any $T\in (0, T_{\mathrm{max}}):$
\begin{align*}
\int_{\mathbb{R}^{2d}}\zeta_{\sigma}f(T)dpdx-\int_{\mathbb{R}^{2d}}\zeta_{\sigma}f_0dpdx&=\int_0^T\int_{\mathbb{R}^{2d}} (\Delta_p \zeta_{\sigma} f-\nabla_p \zeta_{\sigma}\cdot p(f- f^2))dpdxdt\\
&+\int_0^T\int_{\mathbb{R}^{2d}} (\varepsilon \Delta_x \zeta_{\sigma} f+\nabla_x \zeta_{\sigma}\cdot p(f- f^2))dpdxdt.
\end{align*}
 The fact that  $\zeta_{\sigma}\to  1$ pointwise as $\sigma\to 0$  and  the Lebesgue dominated convergence theorem  show
 $$\lim_{\sigma\to 0}\big(\int_{\mathbb{R}^{2d}}\zeta_{\sigma}f(T)dpdx-\int_{\mathbb{R}^{2d}}\zeta_{\sigma}f(0)dpdx\big)=\int_{\mathbb{R}^{2d}}f(T)dpdx-\int_{\mathbb{R}^{2d}}f_0dpdx.$$ 
{The terms $\Delta_p \zeta_{\sigma} f-\nabla_p \zeta_{\sigma}\cdot p(f- f^2)$ and $\varepsilon \Delta_x \zeta_{\sigma} f+\nabla_x \zeta_{\sigma}\cdot p(f- f^2)$  converge  pointwise to zero  as $\sigma \to 0.$} Moreover, $\Delta_{x,p} \zeta_{\sigma} $,  $\nabla_x \zeta_{\sigma}\cdot p$, and $\nabla_p \zeta_{\sigma}\cdot p$ are bounded; see \eqref{gr.z}. Hence, we can apply the Lebesgue dominated convergence theorem 
 $$\lim_{\sigma\to 0}\int_0^T\int_{\mathbb{R}^{2d}} (\Delta_p \zeta_{\sigma} f-\nabla_p \zeta_{\sigma}\cdot p(f- f^2))dpdxdt=0$$
 and 
 $$\lim_{\sigma\to 0}\int_0^T\int_{\mathbb{R}^{2d}} (\varepsilon \Delta_x \zeta_{\sigma} f+\nabla_x \zeta_{\sigma}\cdot p(f- f^2))dpdxdt=0.$$
 These limits show
 $$\int_{\mathbb{R}^{2d}}f(T)dpdx=\int_{\mathbb{R}^{2d}}f_0dpdx$$
for all $T\in (0, T_{\mathrm{max}}).$
\end{proof}
\subsection{Contraction of the $L^1 $ norm and comparison principle}
We define
\begin{equation}\label{sign}
\text{sign}_{\tau}(s)\colonequals \begin{cases} -1,\, \, \,\, \, \,  &\text{if}\,\, \,s\leq -\tau\\
\eta_{\tau}(s),\, \, \, &\text{if}\, \, \, -\tau\leq s\leq  \tau\\
1, \, \, \, \, \, \, \,\, \,  \, \, &\text{if}\, \, \, s\geq \tau
\end{cases} 
\end{equation}
with an increasing odd function $\eta_{\tau}\in C^{\infty}([-\tau,\tau])$ such that $\text{sign}_{\tau}$ is $ C^{\infty}$ at $s=\pm \tau.$   Let $|\cdot|_{\tau}$ denote the primitive of $\text{sign}_{\tau}$ which vanishes at 0. One can  check that  $\text{sign}_{\tau}$ and $|\cdot|_{\tau}$ converge pointwise  to the usual sign  and absolute value functions as $\tau\to 0,$ respectively.

\begin{lemma}\label{lem:L1}
Let $k>2d$,   $\Upsilon \colonequals L^{\infty}(\mathbb{R}^{2d})\cap L^{1}_1(\mathbb{R}^{2d})\cap L^{k}_1(\mathbb{R}^{2d})$ with the norm in \eqref{norm}, and $f_0, g_0\in \Upsilon$.  Assume  $f_{\varepsilon},g_{\varepsilon}\in C([0,T_{\mathrm{max}}); \Upsilon)$ are solutions to \eqref{Eq} with  the initial conditions $f_0$ and $g_0$, respectively. Then 
\begin{equation*}
||f_{\varepsilon}(t)-g_{\varepsilon}(t)||_{L^{1}(\mathbb{R}^{2d})}\leq ||f_0-g_0||_{L^{1}(\mathbb{R}^{2d})}
\end{equation*}
for all $t\in [0, T_{\mathrm{max}}).$ In particular, if $f_0(x,p)\leq g_0(x,p)$ for almost every $(x,p)\in \mathbb{R}^{2d}, $ then $f_{\varepsilon}(t,x,p)\leq g_{\varepsilon}(t,x,p)$ for all $t\in [0,T_{\mathrm{max}})$ and for almost every $(x,p)\in \mathbb{R}^{2d}.$
\end{lemma}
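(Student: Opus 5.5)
We prove the $L^1$-contraction by a Kato-type argument applied to the difference $z\colonequals f_{\varepsilon}-g_{\varepsilon}$, which is smooth on $(0,T_{\mathrm{max}})\times\mathbb{R}^{2d}$ by Theorem \ref{th:reg}. Subtracting the two equations \eqref{Eq} gives
\begin{equation*}
\partial_t z+p\cdot\nabla_x\big(z(1-f_{\varepsilon}-g_{\varepsilon})\big)=\mathrm{div}_p\big(\nabla_p z+p\,z(1-f_{\varepsilon}-g_{\varepsilon})\big)+\varepsilon\Delta_x z .
\end{equation*}
We multiply by $\mathrm{sign}_{\tau}(z)\,\zeta_{\sigma}$, with $\mathrm{sign}_\tau$ from \eqref{sign} and $\zeta_\sigma$ from \eqref{zeta}, and integrate over $[t_0,t]\times\mathbb{R}^{2d}$ for $0<t_0<t<T_{\mathrm{max}}$. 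The time term gives $\int |z|_{\tau}\zeta_{\sigma}$ evaluated between $t_0$ and $t$.

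The diffusion terms $\varepsilon\Delta_x z$ and $\Delta_p z$ produce, after integration by parts, the nonpositive contributions $-\int\mathrm{sign}_\tau'(z)\zeta_\sigma(\varepsilon|\nabla_x z|^2+|\nabla_p z|^2)$. They also produce cutoff terms $-\int \mathrm{sign}_\tau(z)(\varepsilon\nabla_x z\cdot\nabla_x\zeta_\sigma+\nabla_p z\cdot\nabla_p\zeta_\sigma)=\int |z|_\tau(\varepsilon\Delta_x\zeta_\sigma+\Delta_p\zeta_\sigma)$, which are $O(\sigma^2)\|z\|_{L^1}$ by \eqref{gr.z}.

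The transport terms are the main point. Write $h\colonequals 1-f_{\varepsilon}-g_{\varepsilon}$, which is bounded since $f_\varepsilon,g_\varepsilon\in L^\infty$ locally in time. Integrating by parts gives
\begin{equation*}
\int \mathrm{sign}_\tau'(z)\,z\,h\,p\cdot(\nabla_x z-\nabla_p z)\,\zeta_\sigma\;+\;\int \mathrm{sign}_\tau(z)\,z\,h\,p\cdot(\nabla_x\zeta_\sigma-\nabla_p\zeta_\sigma).
\end{equation*}
The second integral is bounded by $C\int_{|(x,p)|\ge 1/\sigma}|z|$, using \eqref{gr.z} and $\sqrt{|x|^2+|p|^2}\,|\nabla\zeta_\sigma|\le C$. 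It vanishes as $\sigma\to0$ since $z\in L^1$.

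The first integral is the delicate one. On the support of $\mathrm{sign}_\tau'$ we have $|z\,\mathrm{sign}_\tau'(z)|\le C$ uniformly in $\tau$, since one may choose $\eta_\tau'\le C/\tau$. Its support $\{|z|<\tau\}$ shrinks to $\{z=0\}$, where $\nabla z=0$ almost everywhere. With the integrability of $|p|\,|\nabla z|$ from Theorem \ref{th:reg}, which holds for $t_0>0$, dominated convergence sends this term to $0$ as $\tau\to0$. Alternatively, one can absorb it with Young's inequality against the good term $\int\mathrm{sign}_\tau'|\nabla_p z|^2$, leaving $C\int \mathrm{sign}_\tau'(z)z^2|p|^2\le C\tau\int_{|z|<\tau}|p|^2\zeta_\sigma\to0$ for fixed $\sigma$. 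The $x$-gradient part is handled the same way.

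Letting first $\tau\to0$, then $\sigma\to0$, gives $\|z(t)\|_{L^1}\le\|z(t_0)\|_{L^1}$. Letting $t_0\to0$ and using $z\in C([0,T_{\mathrm{max}});L^1_1)$ yields the contraction.

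For the comparison principle, we repeat the argument with $\mathrm{sign}_\tau$ replaced by a smoothed positive part $\mathrm{sign}^+_\tau$, whose primitive approximates $(\cdot)^+$. This gives $\|(f_\varepsilon-g_\varepsilon)^+(t)\|_{L^1}\le\|(f_0-g_0)^+\|_{L^1}=0$.

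The main obstacle is the limit $\tau\to0$ in the nonlinear transport term, whose coefficient $p$ is unbounded. This is exactly why the order of limits, first $\tau$ then $\sigma$, and the weighted gradient bounds of Theorem \ref{th:reg} are needed.
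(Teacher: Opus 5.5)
Your proposal is correct. For the contraction it follows the same Kato-type computation as the paper: test the equation for $z$ with $\mathrm{sign}_\tau(z)\zeta_\sigma$, discard the $\mathrm{sign}_\tau'$-weighted gradient squares, and let $\tau\to0$ and then $\sigma\to0$. It differs only in details.

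The paper handles the term $\int\mathrm{sign}_\tau'(z)\,z\,h\,p\cdot(\nabla_x z-\nabla_p z)\zeta_\sigma$ with the identity $\mathrm{sign}_\tau'(z)\,z\,\nabla z=\nabla\big(z\,\mathrm{sign}_\tau(z)-|z|_\tau\big)$ and a second integration by parts. This leaves $z\,\mathrm{sign}_\tau(z)-|z|_\tau=O(\tau)$ tested against $\mathrm{div}_{x,p}(p\,\zeta_\sigma h)$, and it works for any admissible $\eta_\tau$. Your dominated-convergence route instead needs the scaling choice $\eta_\tau'\le C/\tau$. Two small remarks on your route:
\begin{itemize}
\item The pointwise limit is immediate because $z\,\mathrm{sign}_\tau'(z)$ vanishes where $z=0$, so you do not need the fact that $\nabla z=0$ a.e.\ on $\{z=0\}$.
\item For fixed $\sigma$ the factor $p$ is bounded on $\mathrm{supp}\,\zeta_\sigma$. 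Smoothness of $z$ on $[t_0,t]\times\mathrm{supp}\,\zeta_\sigma$ therefore suffices, and the weighted bounds of Theorem~\ref{th:reg} are not needed here.
\end{itemize}
Integrating from $t_0>0$ and then letting $t_0\to0$ via $z\in C([0,T_{\mathrm{max}});L^1)$ is more careful than the paper. The paper integrates from $0$ even though smoothness is only known for $t>0$.

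For the comparison principle the paper does not use a one-sided argument. It combines the contraction with mass conservation (Lemma~\ref{l:mass}) to get
\[
\int_{\mathbb{R}^{2d}}\big(|f_\varepsilon(t)-g_\varepsilon(t)|+f_\varepsilon(t)-g_\varepsilon(t)\big)\le\int_{\mathbb{R}^{2d}}\big(|f_0-g_0|+f_0-g_0\big)=0 .
\]
Your positive-part version is self-contained and does not use mass conservation. The paper's version is shorter, since it only reuses results already proved.
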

\begin{proof}  We omit the dependence of the solutions $f\colonequals f_{\varepsilon}$  and $g\colonequals g_{\varepsilon}$ on $\varepsilon$ for notational convenience.

We compute   
\begin{equation*}
\partial_t(f-g)+p\cdot \nabla_x(f-g-(f^2-g^2))=\varepsilon \Delta_x(f-g)+\Delta_p(f-g)+\text{div}_p(p(f-g) -p (f^2-g^2)). 
\end{equation*}
 By Theorem \ref{th:reg} the solutions $f(t,x,p)$ and $g(t,x,p)$ are smooth and the above equation holds  for all $(x,p)\in \mathbb{R}^{2d}$ and $t\in (0, T_{\mathrm{max}})$. 
 Let $\zeta_{\sigma}$ be the function defined in \eqref{zeta}. We multiply this equation by $\zeta_{\sigma} \text{sign}_{\tau}(f-g)$ and integrate by parts
 \begin{align*}
\frac{d}{dt}& \int_{\mathbb{R}^{2d}}\zeta_{\sigma}|f-g|_{\tau} dpdx\\=&- \int_{\mathbb{R}^{2d}}\zeta_{\sigma}  \text{sign}_{\tau}'(f-g)|\nabla_p(f-g)|^2dpdx- \varepsilon\int_{\mathbb{R}^{2d}}\zeta_{\sigma}  \text{sign}_{\tau}'(f-g)|\nabla_x(f-g)|^2dpdx\\
&- \int_{\mathbb{R}^{2d}}\text{sign}_{\tau}(f-g) \nabla_x\zeta_{\sigma}\cdot (\varepsilon\nabla_x f-pf+pf^2 -\varepsilon\nabla_x  g+pg-pg^2)dpdx\\
&- \int_{\mathbb{R}^{2d}}\text{sign}_{\tau}(f-g) \nabla_p\zeta_{\sigma}\cdot (\nabla_p f+p(f- f^2)- \nabla_p g-p(g- g^2))dpdx\\
&- \int_{\mathbb{R}^{2d}}\zeta_{\sigma} \text{sign}_{\tau}'(f-g)p\cdot\nabla_p(f-g)(f-g)(1- f- g)dpdx\\
&+ \int_{\mathbb{R}^{2d}}\zeta_{\sigma} \text{sign}_{\tau}'(f-g)p\cdot\nabla_x(f-g)(f-g)(1- f- g)dpdx.
 \end{align*}
  We integrate this equation on the interval $[0,T],\, 0<T<T_{\mathrm{max}},$ and  use
 $\zeta_{\sigma}  \text{sign}_{\tau}'(f-g)|\nabla_p(f-g)|^2+\zeta_{\sigma}  \text{sign}_{\tau}'(f-g)|\nabla_x(f-g)|^2 \geq 0$ to estimate
  \begin{align*}
 \int_{\mathbb{R}^{2d}}&\zeta_{\sigma}|f(T)-g(T)|_{\tau} dpdx- \int_{\mathbb{R}^{2d}}\zeta_{\sigma}|f(0)-g(0)|_{\tau} dpdx\\ \leq &-\int_0^T  \int_{\mathbb{R}^{2d}}\text{sign}_{\tau}(f-g) \nabla_x\zeta_{\sigma}\cdot (\varepsilon\nabla_x f-pf+pf^2 -\varepsilon\nabla_x  g+pg-pg^2)dpdxds\\
&-\int_0^T \int_{\mathbb{R}^{2d}}\text{sign}_{\tau}(f-g) \nabla_p\zeta_{\sigma}\cdot (\nabla_p f+p(f- f^2)- \nabla_p g-p(g- g^2))dpdxds\\
&-\int_0^T \int_{\mathbb{R}^{2d}}\zeta_{\sigma} \text{sign}_{\tau}'(f-g)p\cdot\nabla_p(f-g)(f-g)(1- f- g)dpdxds\\
&+\int_0^T \int_{\mathbb{R}^{2d}}\zeta_{\sigma} \text{sign}_{\tau}'(f-g)p\cdot\nabla_x(f-g)(f-g)(1- f- g)dpdxds.
 \end{align*}
 Using the identities
 $$\text{sign}_{\tau}'(f-g)\nabla_x(f-g)(f-g)=\nabla_x((f-g)\text{sign}_{\tau}(f-g)-|f-g|_{\tau}),$$
 $$\text{sign}_{\tau}'(f-g)\nabla_p(f-g)(f-g)=\nabla_p((f-g)\text{sign}_{\tau}(f-g)-|f-g|_{\tau}),$$
 we integrate by parts
 \begin{align*}
 &\int_{\mathbb{R}^{2d}}\zeta_{\sigma}|f(T)-g(T)|_{\tau} dpdx- \int_{\mathbb{R}^{2d}}\zeta_{\sigma}|f(0)-g(0)|_{\tau} dpdx\\ \leq &-\int_0^T  \int_{\mathbb{R}^{2d}}\text{sign}_{\tau}(f-g) \nabla_x\zeta_{\sigma}\cdot (\varepsilon\nabla_x f-pf+pf^2 -\varepsilon\nabla_x  g+pg-pg^2)dpdxds\\
&-\int_0^T \int_{\mathbb{R}^{2d}}\text{sign}_{\tau}(f-g) \nabla_p\zeta_{\sigma}\cdot (\nabla_p f+p(f- f^2)- \nabla_p g-p(g- g^2))dpdxds\\
&+ \int_0^T\int_{\mathbb{R}^{2d}}((f-g)\text{sign}_{\tau}(f-g)-|f-g|_{\tau})\mathrm{div}_p(p\zeta_{\sigma} (1- f- g))dpdxdt\\
&-\int_0^T \int_{\mathbb{R}^{2d}}((f-g)\text{sign}_{\tau}(f-g)-|f-g|_{\tau})\mathrm{div}_x(p\zeta_{\sigma} (1- f- g))dpdxdt.
 \end{align*} 
 Since $|f-g|_{\tau}\to |f-g|,$ $\text{sign}_{\tau}(f-g)\to \text{sign}(f-g),$ and $(f-g)\text{sign}_{\tau}(f-g)-|f-g|_{\tau}\to 0$ pointwise  as $\tau\to 0,$ we conclude that 
\begin{align*}
 \int_{\mathbb{R}^{2d}}&\zeta_{\sigma}|f(T)-g(T)| dpdx- \int_{\mathbb{R}^{2d}}\zeta_{\sigma}|f(0)-g(0)| dpdx\\ \leq &-\int_0^T  \int_{\mathbb{R}^{2d}}\text{sign}(f-g) \nabla_x\zeta_{\sigma}\cdot (\varepsilon\nabla_x f-pf+pf^2 -\varepsilon\nabla_x  g+pg-pg^2)dpdxds\\
&-\int_0^T \int_{\mathbb{R}^{2d}}\text{sign}(f-g) \nabla_p\zeta_{\sigma}\cdot (\nabla_p f+p(f- f^2)- \nabla_p g-p(g- g^2))dpdxds.
 \end{align*}  
  We have that $|\varepsilon\nabla_x f-pf+pf^2 -\varepsilon\nabla_x  g+pg-pg^2|$ and $|\nabla_p f+p(f- f^2)- \nabla_p g-p(g- g^2)|$ belong to $ L^1([0,T]\times\mathbb{R}^{2d})$ by Theorems \ref{th:loc} and  \ref{th:reg},  $|\nabla_{x,p}\zeta_{\sigma}|\to 0$ uniformly as $\sigma\to 0$ by \eqref{gr.z}, and $|\text{sign}(f-g)|\leq 1$.  Thus, we can pass to the limit as  $\sigma \to 0$ 
  and obtain 
 $$ \int_{\mathbb{R}^{2d}}|f(T)-g(T)| dpdx\leq \int_{\mathbb{R}^{2d}}|f_0-g_0| dpdx.$$ Since $T\in (0,T_{\mathrm{max}})$ is arbitrary, we obtain the  claimed result. 
 
 Let $f_0\leq g_0 $ for almost every $(x,p)\in \mathbb{R}^{2d}. $ {The function $|f(t,x,p)-g(t,x,p)|+f(t,x,p)-g(t,x,p)$ is nonnegative almost everywhere. It is zero a.e.\,if  $f\leq g $ almost everywhere. } 
 The {$L^1$-contractivity} and the mass conservation  imply 
 \begin{align*}
 0&\leq  \int_{\mathbb{R}^{2d}}(|f(t,x,p)-g(t,x,p)|+f(t,x,p)-g(t,x,p)) dpdx \\&\leq  \int_{\mathbb{R}^{2d}}(|f_0(x,p)-g_0(x,p)|+f_0(x,p)-g_0(x,p)) dpdx\\
 &= \int_{\mathbb{R}^{2d}}(g_0-f_0+f_0-g_0) dpdx=0.
 \end{align*}
{We conclude that  $ \int_{\mathbb{R}^{2d}}(|f(t,x,p)-g(t,x,p)|+f(t,x,p)-g(t,x,p)) dpdx$ is zero for all $t\geq 0.$}
 This shows $|f(t,x,p)-g(t,x,p)|+f(t,x,p)-g(t,x,p)= 0$ for a.e.  $(x,p)$ and  for all $t\geq 0,$ which means $f(t,x,p)\leq g(t,x,p)$ for almost every $(x,p)\in \mathbb{R}^{2d}$ and  for all $t\in [0,T_{\mathrm{max}}).$
 
\end{proof}
By taking  $f_{\varepsilon}\equiv 0$ and $g_{\varepsilon}\equiv 1$ in Lemma \ref{lem:L1}, we obtain the following corollary.
\begin{corollary}\label{cor}
Let  $k>2d$,  $\Upsilon \colonequals L^{\infty}(\mathbb{R}^{2d})\cap L^{1}_1(\mathbb{R}^{2d})\cap L^{k}_1(\mathbb{R}^{2d})$ with the norm in \eqref{norm}, and $f_0\in \Upsilon$. Let $f_{\varepsilon}\in C([0,T_{\mathrm{max}}); \Upsilon)$ with $f_{\varepsilon}(0,x,p)=f_0(x,p) $   be the solution  to \eqref{Eq}  in Theorem \ref{th:loc}.   Assume additionally that $0\leq f_0(x,p)\leq 1$ for almost every $(x,p)\in \mathbb{R}^{2d}, $ then $0\leq f(t,x,p)\leq 1$ for all $t\in [0,T_{\mathrm{max}})$ and for almost every $(x,p)\in \mathbb{R}^{2d}.$ 
\end{corollary}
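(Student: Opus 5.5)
The plan is to apply the comparison part of Lemma \ref{lem:L1} twice, with the constant functions $0$ and $1$ as comparison solutions. The constants lie outside $\Upsilon$, so I will redo the comparison argument for them directly rather than cite the lemma as stated.

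First I check that $0$ and $1$ are classical solutions of \eqref{Eq}. For any constant $c$ one has $\nabla_x(c-c^2)=0$, $\nabla_p c=0$ and $\Delta_x c=0$. The remaining term is $\operatorname{div}_p(p(c-c^2))=d(c-c^2)$, which vanishes for $c\in\{0,1\}$.

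Next I rerun the proof of Lemma \ref{lem:L1} with $g\equiv 0$ and then with $g\equiv 1$. In that proof, $g$ enters only through the smooth equation satisfied by $f-g$ and through the cutoff terms. The constant $0$ causes no trouble, since it is the zero element of $\Upsilon$. For $g\equiv1$, I work with $w=f-1$, which is bounded and smooth but not integrable. The key point is to localise with the cutoff $\zeta_\sigma$ and to use the positive part instead of the absolute value. I apply $\operatorname{sign}_\tau^+$-type truncations, that is, I multiply by $\zeta_\sigma\,\mathbf{1}_{\tau}^{+}(f-1)$, a smoothed Heaviside function. The fluxes in the cutoff terms are then $\operatorname{sign}^+(f-1)\,(\nabla f + p(f-f^2))$ and the corresponding $x$-flux. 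These lie in $L^1((0,T)\times\mathbb{R}^{2d})$ by Theorems \ref{th:loc} and \ref{th:reg}, because $\nabla f\in L^1_1$ away from $t=0$ with a $t^{-1/2}$ weight, which is integrable in time. The dissipative terms have a sign exactly as in Lemma \ref{lem:L1}, and the $\tau\to0$ limit removes the $\operatorname{sign}_\tau'$ transport terms as before.

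Letting $\sigma\to0$ gives
$$\int(f(t)-1)^+\,dxdp\le\int(f_0-1)^+\,dxdp=0.$$
Here $(f(t)-1)^+\in L^1$ because $f(t)\in L^1$. Hence $f\le1$ a.e. The same argument with $(-f)^+$ and $g\equiv0$ gives $f\ge0$; alternatively this follows directly from the $L^1$-contraction together with mass conservation (Lemma \ref{l:mass}), exactly as at the end of Lemma \ref{lem:L1}.

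The main obstacle is justifying the comparison with $g\equiv1$, because $1\notin L^1$ and the mass-conservation trick used at the end of Lemma \ref{lem:L1} is unavailable. The localised positive-part ($L^1$-)contraction handles this, and it only requires that the boundary terms coming from $\nabla\zeta_\sigma$ vanish as $\sigma\to0$. That holds because $\operatorname{sign}^+(f-1)$ is supported where $f>1$ and the fluxes of $f$ are integrable.
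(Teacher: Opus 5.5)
Your proposal is correct and uses the same idea as the paper: compare the solution with the constant solutions $0$ and $1$ through the argument of Lemma \ref{lem:L1}. The paper does this in one line by ``taking $0$ and $1$'' in that lemma. Your version differs in how it handles $g\equiv 1$: you do not cite the lemma but rerun its proof with $\zeta_\sigma\mathbf{1}^{+}_{\tau}(f-1)$, which gives a localized one-sided contraction for $(f-1)^+$.

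This is a real gain in rigor. The constant $1$ is not in $\Upsilon$, so Lemma \ref{lem:L1} as stated does not cover $g\equiv 1$. Moreover $\|f(t)-1\|_{L^1}=\infty$, and the comparison step at the end of that lemma uses global mass conservation of $g$, which is meaningless here.

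Your argument needs only the following, all of which hold:
\begin{itemize}
\item the cutoff fluxes $\nabla_p f$ and $\varepsilon\nabla_x f$ are integrable in time, by the $t^{-1/2}$ bound of Theorem \ref{th:reg};
\item $p(f-f^2)\in L^1((0,T)\times\mathbb{R}^{2d})$, since $f\in C([0,T];\Upsilon)$;
\item the $\mathbf{1}^{+\prime}_{\tau}$ transport terms vanish as $\tau\to0$. These can be written as $\int\Psi_\tau(f-1)\,\mathrm{div}(\zeta_\sigma p f)$ with $\Psi_\tau(s)=\int_0^s r\,\mathbf{1}^{+\prime}_{\tau}(r)\,dr$ and $|\Psi_\tau|\le\tau$.
\end{itemize}

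The paper's one-line citation could also be repaired another way. Combine the localized inequality for $\int\zeta_\sigma|f-1|$ from the proof of Lemma \ref{lem:L1} with the localized mass identity from the proof of Lemma \ref{l:mass}, using $|f-1|+f-1=2(f-1)^+$. This amounts to your positive-part argument.

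One small correction: your opening claim that both constants lie outside $\Upsilon$ is wrong for $0$, which is in $\Upsilon$, as you note later. So the lower bound $f\ge 0$ follows from Lemma \ref{lem:L1} exactly as stated.
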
 

\subsection{Bounds on the $p-$moments and global existence}
We establish time dependent bounds on the $p-$moments. More precisely, we will show that the moments increase at most as a polynomial of $t$.
\begin{lemma}\label{lem:p-moment}
Let   $k>2d$,  $\Upsilon \colonequals L^{\infty}(\mathbb{R}^{2d})\cap L^{1}_1(\mathbb{R}^{2d})\cap L^{k}_1(\mathbb{R}^{2d})$ with the norm in \eqref{norm}, and  $f_0\in \Upsilon$. Let $f_{\varepsilon}\in C([0,T_{\mathrm{max}}); \Upsilon)$ with $f_{\varepsilon}(0,x,p)=f_0(x,p)$  be the solution  to \eqref{Eq}  in Theorem \ref{th:loc}.  Assume additionally  $f_0\in L^1_{k}(\mathbb{R}^{2d})$ and $0\leq f_0(x,p)\leq 1$ almost every $(x,p)\in \mathbb{R}^{2d}.$   Then, for $t\in [0,T_{\mathrm{max}})$ and $1\leq \gamma\leq k,$ the $\gamma-$moment of $f$ with respect to $p$, i.e., $\int_{\mathbb{R}^{2d}}|p|^{\gamma}f(t,x,p)dxdp,$ is bounded by a polynomial $P_{\lceil \gamma/2 \rceil}(t)$ of degree $\lceil \gamma/2 \rceil,$ which is independent of $\varepsilon$ and depends only on  $\int_{\mathbb{R}^{2d}}f_0(x,p)dxdp,$ $\int_{\mathbb{R}^{2d}}|p|^{\gamma}f_0(x,p)dxdp,$ $\gamma$, and $d$. Here,  $\lceil\gamma/2 \rceil$ denotes the smallest integer  larger or equal to $\gamma/2$.    
\end{lemma}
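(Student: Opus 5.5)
We will test the equation against the weight $\langle p\rangle^{\gamma}$, localized by the cutoff $\zeta_\sigma$ from \eqref{zeta}. By Theorem \ref{th:reg}, $f=f_\varepsilon$ is smooth for $t>0$, and by Corollary \ref{cor} it satisfies $0\le f\le 1$. Set $m_\gamma(t)\colonequals\int_{\mathbb{R}^{2d}}|p|^\gamma f\,dxdp$.

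\textbf{Formal computation.} Multiply \eqref{Eq} by $|p|^\gamma$ and integrate by parts; justifying this is deferred to the next step. The transport term $p\cdot\nabla_x(f-f^2)$ and the viscous term $\varepsilon\Delta_x f$ contribute nothing, because the weight does not depend on $x$. For the Fokker--Planck part,
\begin{align*}
\int |p|^\gamma \mathrm{div}_p(\nabla_p f+p(f-f^2))\,dxdp=\int \Delta_p(|p|^\gamma) f\,dxdp-\gamma\int |p|^{\gamma}(f-f^2)\,dxdp .
\end{align*}
Here $\Delta_p|p|^\gamma=\gamma(\gamma+d-2)|p|^{\gamma-2}$. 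Since $0\le f\le1$ gives $f-f^2\ge 0$, the second term is nonpositive and can be dropped. This yields
\begin{align*}
\frac{d}{dt}m_\gamma(t)\le \gamma(\gamma+d-2)\,m_{\gamma-2}(t).
\end{align*}

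\textbf{Justification.} Use $\zeta_\sigma|p|^\gamma$ as the test function, or better the smooth weight $(1+|p|^2)^{\gamma/2}$, which avoids the singularity of $|p|^{\gamma-2}$ at the origin when $\gamma<2$. The extra terms contain $\nabla\zeta_\sigma$ multiplied by $|p|^{\gamma+1}(f-f^2)$, by $|p|^{\gamma}\nabla f$, or by $\varepsilon|p|^\gamma\nabla_x f$.

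This is the main obstacle, since $p\cdot\nabla_p\zeta_\sigma$ costs one power of $|p|$. Bound (\ref{gr.z}) gives $|(x,p)|\,|\nabla\zeta_\sigma|\le C$, so $|p|^{\gamma+1}|\nabla\zeta_\sigma|\le C|p|^\gamma\,\mathbf 1_{\{|(x,p)|\ge1/\sigma\}}$. It therefore suffices to know a priori that $|p|^\gamma f\in L^1((0,T)\times\mathbb{R}^{2d})$.

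To get this integrability, first run the argument with a truncated weight $\min(|p|,R)^\gamma$, smoothed. Its derivatives are bounded, so all terms are controlled by $f\in L^1_1$, $0\le f\le1$, and the gradient bounds from Theorem \ref{th:reg}; the gradient terms are handled after integrating by parts once more onto $\zeta_\sigma$. With the truncated weight, the dissipative term $-\int \nabla w_R\cdot p\,(f-f^2)$ is still nonpositive because $\nabla w_R\cdot p\ge0$. Moreover $\Delta w_R\le C\,\langle p\rangle^{\gamma-2}$ uniformly in $R$, at least up to a nonnegative singular part of the kink that we smooth away with a convex-type choice. Letting $\sigma\to0$ and then $R\to\infty$ by monotone convergence gives the inequality in integrated form.

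\textbf{Induction.} Work with $M_\gamma\colonequals\int\langle p\rangle^\gamma f$. The differential inequality reads $M_\gamma'\le C_{\gamma,d}M_{\gamma-2}$. Mass conservation (Lemma \ref{l:mass}) gives $M_0=\|f_0\|_{L^1}$. For $\gamma\le2$, use $\langle p\rangle^{\gamma-2}\le1$, so $M_\gamma(t)\le M_\gamma(0)+C\|f_0\|_{L^1}\,t$, which is degree $1=\lceil\gamma/2\rceil$. For $\gamma>2$, bound $M_{\gamma-2}$ by the already-obtained polynomial of degree $\lceil\gamma/2\rceil-1$ and integrate in time to raise the degree by one. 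The coefficients of $M_{\gamma-2}(0)$ are bounded by $\int f_0+\int|p|^\gamma f_0$ via $\langle p\rangle^{\gamma-2}\le C(1+|p|^\gamma)$. None of these constants involve $\varepsilon$, because the $\varepsilon\Delta_x$ term dropped out. Since $|p|^\gamma\le\langle p\rangle^\gamma$, this gives the claim for $m_\gamma$.
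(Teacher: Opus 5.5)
Your proposal is correct and follows essentially the same route as the paper. You test against a $p$-weight localized by $\zeta_\sigma$, truncate the weight in $R$ (the paper's $\phi_R$) to secure finiteness before letting $\sigma\to0$ and $R\to\infty$, discard the nonpositive term coming from $\nabla_p w\cdot p\,(f-f^2)\ge0$, and induct from $\gamma-2$ to $\gamma$ using mass conservation, with $\varepsilon$ dropping out because the weight does not depend on $x$.

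Using $\langle p\rangle^{\gamma}$ instead of $|p|^\gamma$ is a real simplification: it covers all real $\gamma\in[1,k]$ at once and removes the paper's separate treatment of non-integer $\gamma$ and $k$. Your aside about the kink has the wrong sign: the kink of $\min(\langle p\rangle,R)^\gamma$ is concave, so its singular Laplacian is nonpositive, not nonnegative. This does not affect the argument, because the rescaled smooth profile $w_R=R^\gamma\theta(\langle p\rangle/R)$ already gives $\nabla_p w_R\cdot p\ge0$ and $|\Delta_p w_R|\le C\langle p\rangle^{\gamma-2}$ uniformly in $R\ge1$. Here $\theta(s)=s^\gamma$ on $[0,1]$, $\theta$ is constant on $[2,\infty)$, and $\theta'\ge0$.
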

\begin{proof}   We omit the dependence of the solution $f\colonequals f_{\varepsilon}$ on $\varepsilon$ for notational convenience. 
We will prove by induction on $\gamma$. First, we will show that the second moment is bounded. Afterwards, we will assume that we can bound the $(\gamma-2)$-moment and from the induction hypothesis  we obtain that the $\gamma$-moment is bounded.

 Let  $\phi:\mathbb{R}\to [0,\infty)$ be a smooth function such that $\phi(s)=\phi(-s),$ $\phi(s)=s^2$ for $|s|\leq 1,$ and $\phi(s)=4$ for $|s|\geq 2.$ We also require that $\phi'(s)\geq 0$ for $1\leq s\leq 2.$  Let $R>0$ and $\phi_R(p)\colonequals R^2\phi(|p|/R). $
 Let $\zeta_{\sigma}$ be the function defined in \eqref{zeta}.  We multiply \eqref{Eq} by $\phi_R\zeta_{\sigma}$ and integrate by parts
\begin{align*}
\frac{d}{dt}\int_{\mathbb{R}^{2d}}\zeta_{\sigma}\phi_R fdxdp=&\varepsilon \int_{\mathbb{R}^{2d}}\Delta_x\zeta_{\sigma}\phi_R fdxdp\\
&+\int_{\mathbb{R}^{2d}}(\Delta_p\zeta_{\sigma}\phi_R+2\nabla_p\zeta_{\sigma}\cdot \nabla_p \phi_R+\Delta_p \phi_R \zeta_{\sigma} ) fdxdp\\
&+ \int_{\mathbb{R}^{2d}}\nabla_x\zeta_{\sigma}\cdot p \phi_R (f-f^2)dxdp\\
&-\int_{\mathbb{R}^{2d}}(\nabla_p\zeta_{\sigma}\cdot p\phi_R+\zeta_{\sigma}\nabla_p \phi_R\cdot p) (f-f^2)dxdp.
\end{align*}
We integrate this equation on the interval $[0,T]$ with $0<T<T_{\mathrm{max}}$
\begin{align*}
\int_{\mathbb{R}^{2d}}&\zeta_{\sigma}\phi_R f(T)dxdp-\int_{\mathbb{R}^{2d}}\zeta_{\sigma}\phi_R f(0)dxdp\\
=&\varepsilon \int_0^T\int_{\mathbb{R}^{2d}}\Delta_x\zeta_{\sigma}\phi_R fdxdpdt\\
&+\int_0^T\int_{\mathbb{R}^{2d}}(\Delta_p\zeta_{\sigma}\phi_R+2\nabla_p\zeta_{\sigma}\cdot \nabla_p \phi_R+\Delta_p \phi_R \zeta_{\sigma} ) fdxdpdt\\
&+ \int_0^T\int_{\mathbb{R}^{2d}}\nabla_x\zeta_{\sigma}\cdot p \phi_R (f-f^2)dxdpdt\\
&-\int_0^T\int_{\mathbb{R}^{2d}}(\nabla_p\zeta_{\sigma}\cdot p\phi_R+\zeta_{\sigma}\nabla_p \phi_R\cdot p) (f-f^2)dxdpdt.
\end{align*}
Equation \eqref{gr.z} shows that  $\Delta_x\zeta_{\sigma}$, $\Delta_p\zeta_{\sigma}$,  $\nabla_x\zeta_{\sigma}$, and $\nabla_p\zeta_{\sigma}$ converge to zero uniformly as $\sigma \to 0$, and $\nabla_x\zeta_{\sigma}\cdot p$ and $\nabla_p\zeta_{\sigma}\cdot p$ are bounded.
Since $0\leq \phi_R\leq R^2$, $f\in \Upsilon_T,$ and $0\leq f\leq 1,$ we can pass to the limit as $\sigma \to 0$ by the Lebesgue dominated convergence theorem
\begin{align*}
\int_{\mathbb{R}^{2d}} \phi_R f(T)dxdp-\int_{\mathbb{R}^{2d}}\phi_R f_0dxdp
=&\int_0^T\int_{\mathbb{R}^{2d}} \Delta_p \phi_R   fdxdpdt-\int_0^T\int_{\mathbb{R}^{2d}}\nabla_p \phi_R\cdot p (f-f^2)dxdpdt.
\end{align*} 
 We have  $\nabla_p \phi_R\cdot p\geq 0$ and $ |\Delta_p \phi_R  |\leq C$ for some constant $C>0$ (independent of $R$).     Using the mass conservation and $f-f^2\geq 0$,  we can estimate
 \begin{align*}
\int_{\mathbb{R}^{2d}} \phi_R f(T)dxdp\leq \int_{\mathbb{R}^{2d}}\phi_R f_0dxdp
+CT\int_{\mathbb{R}^{2d}}    f_0dxdp.
\end{align*}
We now let $R\to \infty.$ Fatou's lemma provides
  \begin{align*}
\int_{\mathbb{R}^{2d}} |p|^2 f(T)dxdp\leq \int_{\mathbb{R}^{2d}}|p|^2 f_0dxdp
+CT\int_{\mathbb{R}^{2d}}    f_0dxdp.
\end{align*}
This show that the second moment $\int_{\mathbb{R}^{2d}} |p|^2 f(T)dxdp$ is finite for all $T\in (0,T_{\mathrm{max}})$  if $ \int_{\mathbb{R}^{2d}}|p|^2 f_0dxdp$ is finite. Having this information we can actually redo the above computations by replacing $\phi_R$ with $|p|^2$ and obtain  
\begin{align*}
\int_{\mathbb{R}^{2d}}|p|^2 f(t)dxdp\leq\int_{\mathbb{R}^{2d}}|p|^2 f_0dxdp+2d t\int_{\mathbb{R}^{2d}} f_0dxdp,\,\,\,\,\forall\, t\in [0,T_{\mathrm{max}}).
\end{align*}
This shows that the second $\int_{\mathbb{R}^{2d}}|p|^2 f(t)dxdp$ is bounded by a linear polynomial.  
In view of $2|p|\leq 1+|p|^2$, we have
$$2\int_{\mathbb{R}^{2d}}|p| f(t)dxdp\leq \int_{\mathbb{R}^{2d}} f_0dxdp+\int_{\mathbb{R}^{2d}}|p|^2 f(t)dxdp.$$
Therefore, $\int_{\mathbb{R}^{2d}}|p| f(t)dxdp$ is also bounded by a linear polynomial. Also,  note $\lceil 1/2\rceil =1.$ Hence, the claimed bound holds for  $\gamma=1,2.$

Next, we assume that $\gamma\in \mathbb{N}$ and  the $(\gamma-2)$-moment is bounded by a polynomial of degree $\lceil \gamma/2-1\rceil.$ As we did before, one can show that the $\gamma$-moment $\int_{\mathbb{R}^{2d}} |p|^{\gamma} f(T)dxdp$ is finite for all $T\in (0,T_{\mathrm{max}})$  if $ \int_{\mathbb{R}^{2d}}|p|^{\gamma} f_0dxdp$ is finite. We compute the $\gamma$-moment 
for any $T<T_{\mathrm{max}}$
\begin{align*}
\int_{\mathbb{R}^{2d}}\zeta_{\sigma}&|p|^{\gamma} f(T)dxdp=\int_{\mathbb{R}^{2d}}\zeta_{\sigma}|p|^{\gamma} f_0dxdp+\varepsilon \int_0^T\int_{\mathbb{R}^{2d}}\Delta_x\zeta_{\sigma}|p|^{\gamma} fdxdpdt
\\&+\int_0^T\int_{\mathbb{R}^{2d}}(\Delta_p\zeta_{\sigma}|p|^{\gamma}+2\gamma|p|^{\gamma-2}\nabla_p\zeta_{\sigma}\cdot p+\gamma(\gamma-2+d)|p|^{\gamma-2} \zeta_{\sigma} ) fdxdpdt\\
&+ \int_0^T\int_{\mathbb{R}^{2d}}\nabla_x\zeta_{\sigma}\cdot p |p|^{\gamma} (f-f^2)dxdpdt\\
&-\int_0^T\int_{\mathbb{R}^{2d}}(\nabla_p\zeta_{\sigma}\cdot p|p|^{\gamma}+\gamma\zeta_{\sigma}|p|^{\gamma}) (f-f^2)dxdpdt.
\end{align*}
Equation \eqref{gr.z} shows that  $\Delta_x\zeta_{\sigma}$, $\Delta_p\zeta_{\sigma}$,  $\nabla_x\zeta_{\sigma}$, and $\nabla_p\zeta_{\sigma}$ converge to zero uniformly as $\sigma \to 0$, and $\nabla_x\zeta_{\sigma}\cdot p$ and $\nabla_p\zeta_{\sigma}\cdot p$ are bounded.
Since $\int_{\mathbb{R}^{2d}} |p|^{\gamma} f(T)dxdp$ is finite and  $0\leq f\leq 1,$ we can pass to the limit as $\sigma \to 0$ by the Lebesgue dominated convergence theorem
\begin{align*}
\int_{\mathbb{R}^{2d}}|p|^{\gamma} f(T)dxdp=&\int_{\mathbb{R}^{2d}}|p|^{\gamma} f_0dxdp+
\gamma(\gamma-2+d)\int_0^T\int_{\mathbb{R}^{2d}}|p|^{\gamma-2}  fdxdpdt\\
&-\gamma\int_0^T\int_{\mathbb{R}^{2d}} |p|^{\gamma} (f-f^2)dxdpdt.
\end{align*}
Using $0\leq f\leq 1$ we get 
\begin{align}\label{Pgam}
\int_{\mathbb{R}^{2d}}|p|^{\gamma} f(T)dxdp\leq &\int_{\mathbb{R}^{2d}}|p|^{\gamma} f_0dxdp+
\gamma(\gamma-2+d)\int_0^T\int_{\mathbb{R}^{2d}}|p|^{\gamma-2}  fdxdpdt.
\end{align}
By induction, $\int_{\mathbb{R}^{2d}}|p|^{\gamma-2}  fdxdp$ is bounded by a polynomial $P_{\lceil \gamma/2-1\rceil}(t)$ of degree $\lceil\gamma/2-1\rceil=\lceil\gamma/2\rceil -1.$ Since its integral on $[0,T]$ gives a polynomial of degree $\lceil \gamma/2\rceil$ with respect to $T$, we obtain the claimed result for all natural numbers  $\gamma\leq k$.  

Let $\gamma$ be not a natural number and $\lceil \gamma\rceil\leq k$. 
 We write $\gamma=\lceil \gamma  \rceil  -\{\gamma\},$ where $0<\{\gamma\}<1.$ Using $|p|^{\gamma}\leq C(1+|p|^{\lceil \gamma \rceil}) $ we have  $$\int_{\mathbb{R}^{2d}}|p|^{\gamma} f(T)dxdp\leq C\int_{\mathbb{R}^{2d}} f_0dxdp+C\int_{\mathbb{R}^{2d}}|p|^{\lceil \gamma \rceil} f(T)dxdp.$$ 
The right hand side of this equation is bounded by a polynomial $P_{\lceil \lceil \gamma \rceil/2\rceil}$. As we have $\lceil \lceil \gamma \rceil/2\rceil=\lceil  \gamma /2\rceil$, we obtain the claimed result for all $\gamma\geq 1$ with $\lceil \gamma\rceil\leq k$. If $k$ is a natural number, the proof is complete.

If $k$ is not  a natural number, then for $\lceil k \rceil -1<\gamma\leq k$,  we have $\lceil \gamma \rceil =\lceil k \rceil +1>k$ and so the above argument cannot be used. In this case we use the fact that  \eqref{Pgam} holds even $\gamma$ is not a natural number.  As we have  $\lceil \gamma-2\rceil=\lceil \gamma\rceil -2 \leq k ,$ we can apply the above argument to the right hand side of  \eqref{Pgam}, implying the claimed result in this case.
\end{proof}
\begin{remark}\label{remark}
We note that $L^{1}_{k}(\mathbb{R}^{2d}) \subset L^1_{l}(\mathbb{R}^{2d})$ for all $l\in [1, k]$ because of $$1+|p|^{l}\leq C(1+|p|^{k})$$ for some constant $C>0$. In particular, $L^{1}_{k}(\mathbb{R}^{2d})\subset L^1_{1}(\mathbb{R}^{2d}).$ Moreover, we have 
 $L^1_{k}(\mathbb{R}^{2d})\cap L^{\infty}(\mathbb{R}^{2d})\subset L^k_{1}(\mathbb{R}^{2d})$ because of 
\begin{align*}
||f||_{L_1^k}=\left(\int_{\mathbb{R}^{2d}}(1+|p|)^k |f|^kdxdp\right)^{\frac{1}{k}} &\leq ||f||_{L^{\infty}}^{1-\frac{1}{k}} \left(\int_{\mathbb{R}^{2d}}(1+|p|)^k |f|dxdp\right)^{\frac{1}{k}}\\
& \leq C||f||_{L^{\infty}}^{1-\frac{1}{k}} ||f||_{L^1_{k}}^{\frac{1}{k}}.
\end{align*}
Thus, we have   $L^{1}_{k}(\mathbb{R}^{2d}) \cap L^{\infty}(\mathbb{R}^{2d}) \subset \Upsilon.$
\end{remark}

Our analysis above on the properties of the local solution $f_{\varepsilon}$ constructed in Theorem \ref{th:loc} show that this solutions is actually global:  Let  $f_0\in L^1_{k}(\mathbb{R}^{2d})$ and $0\leq f_0\leq 1$ for almost every $(x,p)\in \mathbb{R}^{2d}.$ Then we have $f_0\in \Upsilon$ by Remark \ref{remark}. 
By Theorem \ref{th:loc}, if the solution  $f_{\varepsilon}$ is not global, i.e., $T_{\mathrm{max}}<\infty,$ then at least one of the limits 
$$\lim_{t\nearrow T_{\mathrm{max}}} ||f_{\varepsilon}(t)||_{L^{\infty}},\,\,\,\,\lim_{t\nearrow T_{\mathrm{max}}} ||f_{\varepsilon}(t)||_{L^{1}_1}, \,\,\,\text{and}\,\,\,\lim_{t\nearrow T_{\mathrm{max}}} ||f_{\varepsilon}(t)||_{L^{k}_1}$$
is infinite.
 Corollary \ref{cor}    shows that  $0\leq f_{\varepsilon}\leq 1$ implying that $\lim_{t\nearrow T_{\mathrm{max}}} ||f_{\varepsilon}(t)||_{L^{\infty}}$ is bounded by 1.  The mass conservation in Lemma \ref{l:mass} and $f\geq 0$ provide $||f_{\varepsilon}(t)||_{L^1}=||f_0||_{L^1}$. This equation,  Lemma \ref{lem:p-moment}, and Remark \ref{remark} provide that $\lim_{t\nearrow T_{\mathrm{max}}} ||f_{\varepsilon}(t)||_{L^{1}_1}$ and $\lim_{t\nearrow T_{\mathrm{max}}} ||f_{\varepsilon}(t)||_{L^{k}_1}$ are also finite. This shows that $T_{\mathrm{max}}=\infty$. Hence, we have the following result.
\begin{theorem}\label{th:global}
Let $k>2d$,    $f_0\in L^1_{k}(\mathbb{R}^{2d})$, $0\leq f_0\leq 1$ for almost every $(x,p)\in \mathbb{R}^{2d},$ and $\Upsilon \colonequals L^{\infty}(\mathbb{R}^{2d})\cap L^{1}_1(\mathbb{R}^{2d})\cap L^{k}_1(\mathbb{R}^{2d})$ with the norm in \eqref{norm}. Then the Cauchy problem \eqref{Eq} with the initial condition $f_0$ has a unique mild solution $f\in C([0,\infty); \Upsilon)$ such that
\begin{itemize}
\item   $f_{\varepsilon}(t,\cdot,\cdot) \in L^{1}_{k}(\mathbb{R}^{2d}) $ for all $t\geq 0$;
\item  $0\leq f_{\varepsilon}(t,x,p)\leq 1$  for all $t\in [0, \infty)$ and for almost every $(x,p)\in \mathbb{R}^{2d}.$ 
\item  $ \displaystyle \int_{\mathbb{R}^{2d}}f_{\varepsilon}(t,x,p)dpdx=\int_{\mathbb{R}^{2d}}f_0(x,p)dpdx, \, \, \, \, \forall\, t\in [0, \infty);$
\item For all $\gamma\in [1, k],$
  the $\gamma$-moment of $f_{\varepsilon}$ with respect to $p$, i.e., $\int_{\mathbb{R}^{2d}}|p|^{\gamma}f_{\varepsilon}(t,x,p)dxdp,$ is bounded by a polynomial $P_{\lceil \gamma/2 \rceil}(t)$   (depending only on  $\int_{\mathbb{R}^{2d}}f_0(x,p)dxdp,$ $\int_{\mathbb{R}^{2d}}|p|^{\gamma}f_0(x,p)dxdp,$ $\gamma$, and $d$) of degree $\lceil \gamma/2 \rceil$ for almost every $t\in (0,\infty)$.    
\item   $t^{\frac{1}{2}}|\nabla_x f_{\varepsilon}| $ and  $t^{\frac{1}{2}}|\nabla_p f_{\varepsilon}| $ belong to $L^{\infty}((0,T); L^1_1(\mathbb{R}^{2d})\cap L^k_1(\mathbb{R}^{2d}))$ for all $T>0$, and $f_{\varepsilon}\in C^{\infty}((0,\infty)\times \mathbb{R}^{2d})$. 
\end{itemize}
\end{theorem}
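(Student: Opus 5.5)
The plan is to assemble Theorem \ref{th:global} from the results of this section; essentially one continuation argument is needed. Fix $f_0\in L^1_k(\mathbb{R}^{2d})$ with $0\leq f_0\leq 1$. By Remark \ref{remark}, $f_0\in L^1_k\cap L^\infty\subset \Upsilon$. Theorem \ref{th:loc} then gives a unique mild solution $f_\varepsilon\in C([0,T_{\mathrm{max}});\Upsilon)$, together with the blow-up alternative. The main step is to show that $T_{\mathrm{max}}=\infty$ by proving that none of the three norms in the alternative can blow up at a finite $T_{\mathrm{max}}$.

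\textbf{Step 1: the $L^\infty$ norm.} Corollary \ref{cor} gives $0\leq f_\varepsilon\leq 1$ on $[0,T_{\mathrm{max}})$, so this norm stays bounded by $1$.

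\textbf{Step 2: the $L^1$ and $L^1_1$ norms.} Lemma \ref{l:mass} together with $f_\varepsilon\geq 0$ gives $\|f_\varepsilon(t)\|_{L^1}=\|f_0\|_{L^1}$. Lemma \ref{lem:p-moment} with $\gamma=1$ bounds $\int|p|f_\varepsilon\,dx\,dp$ by a polynomial in $t$, hence uniformly on any bounded interval. This controls $\|f_\varepsilon(t)\|_{L^1_1}$.

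\textbf{Step 3: the $L^k_1$ norm.} Lemma \ref{lem:p-moment} with $\gamma=k$ bounds the $k$-moment, so $\|f_\varepsilon(t)\|_{L^1_k}$ is bounded on bounded intervals. The interpolation in Remark \ref{remark},
\[
\|f\|_{L^k_1}\le C\|f\|_{L^\infty}^{1-1/k}\|f\|_{L^1_k}^{1/k},
\]
combined with Step 1 then bounds $\|f_\varepsilon(t)\|_{L^k_1}$.

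\textbf{Step 4: conclusion of the continuation argument.} If $T_{\mathrm{max}}<\infty$, the polynomial bounds evaluated at $T_{\mathrm{max}}$ are finite, which contradicts the blow-up alternative. Hence $T_{\mathrm{max}}=\infty$, and uniqueness follows from Theorem \ref{th:loc}.

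\textbf{Step 5: the listed properties.} Each property now holds globally.
\begin{itemize}
\item Membership $f_\varepsilon(t)\in L^1_k$ is Lemma \ref{lem:p-moment} plus mass conservation.
\item The bounds $0\le f_\varepsilon\le 1$ are Corollary \ref{cor}.
\item Mass conservation is Lemma \ref{l:mass}.
\item The moment bounds are Lemma \ref{lem:p-moment}, whose polynomials depend only on the stated quantities.
\item The weighted gradient bounds and $C^\infty$ smoothness are Theorem \ref{th:reg}, applied with any $T<T_{\mathrm{max}}=\infty$.
\end{itemize}

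The only delicate point is the blow-up alternative itself. It must be read as: boundedness of the $\Upsilon$-norm along a sequence $t_n\nearrow T_{\mathrm{max}}$ allows extension past $T_{\mathrm{max}}$. Our bounds are uniform on all of $[0,T_{\mathrm{max}})$, which is stronger than needed, so no additional argument is required there.

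The other point to check is that Lemma \ref{lem:p-moment} is legitimately applied on $[0,T_{\mathrm{max}})$ before globality is known. It is, since that lemma is stated on $[0,T_{\mathrm{max}})$ with polynomial bounds independent of $T_{\mathrm{max}}$.
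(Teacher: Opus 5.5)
Your proposal is correct and follows essentially the same argument as the paper. You place $f_0$ in $\Upsilon$ via Remark \ref{remark} and invoke the blow-up alternative of Theorem \ref{th:loc}, then rule out blow-up with Corollary \ref{cor}, Lemma \ref{l:mass}, Lemma \ref{lem:p-moment} and the interpolation in Remark \ref{remark}, and finally read off the listed properties from those results together with Theorem \ref{th:reg}.
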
  
\section{Solution to the degenerate parabolic equation}
In this section we prove the main results, i.e., existence and uniqueness of entropy solution as in Definition \ref{def:gen.sol}. We first obtain estimates for the first order derivatives of the global solution $f_{\varepsilon}$    constructed for \eqref{Eq} in Theorem \ref{th:global}. We use these bounds to show that there is a sequence $\varepsilon_n$ converging to zero  such that $f_{\varepsilon_{n}}$  converges to a $BV$ function $f$ as $n\to \infty$ and this limit is a unique entropy solution of \eqref{DEq}.
\subsection{Estimates on the first order derivatives}  
\begin{lemma}\label{l:epsilon}
Let $k>2d$,   $f_0\in L^1_{k}(\mathbb{R}^{2d}) $, and $0\leq f_0(x,p)\leq 1$ for almost every $(x,p)\in \mathbb{R}^{2d}.$  Let $f_{\varepsilon}(t,x,p)$ be the global solution to  the Cauchy problem \eqref{Eq} in Theorem \ref{th:global}.  
\begin{itemize}
\item[(i)] For all $t\geq 0$
\begin{align*}
\frac{1}{2}||f_{\varepsilon}(t)||^2_{L^2(\mathbb{R}^{2d})}+\int_0^t  ||\nabla_p f_{\varepsilon}(s)||_{L^2(\mathbb{R}^{2d})}^2ds\leq & \frac{1+dt}{2} ||f_0||_{L^1(\mathbb{R}^{2d})}.
\end{align*}
\item[(ii)] Assume   $f_0\in W^{1,1}(\mathbb{R}^{2d})$. Then for all $t\geq 0$
\begin{equation*}
||\nabla_x f_{\varepsilon}(t)||_{L^1(\mathbb{R}^{2d})} \leq 
||\nabla_x f_{0}||_{L^1(\mathbb{R}^{2d})},
\end{equation*}
and
\begin{equation*}
||\nabla_p f_{\varepsilon}(t)||_{L^1(\mathbb{R}^{2d})} \leq \left(||\nabla_p f_{0}||_{L^1(\mathbb{R}^{2d})} +t||\nabla_x f_{0}||_{L^1(\mathbb{R}^{2d})} \right) e^t.
\end{equation*}
\item[(iii)]   Assume  $\mathcal{L}[f_0]\colonequals \mathrm{div}_{p}(\nabla_p f_0+p(f_0-f_0^2))-p\cdot \nabla_x (f_0-f_0^2)$ and $\Delta_x f_0$ belong to $ L^1(\mathbb{R}^{2d})$. Then for all $t\geq 0$
\begin{equation*}
||\partial_t f_{\varepsilon}(t)||_{L^1(\mathbb{R}^{2d})} \leq  
||L f_{0}||_{L^1(\mathbb{R}^{2d})}
+\varepsilon||\Delta_x f_{0}||_{L^1(\mathbb{R}^{2d})}.
\end{equation*}
\end{itemize}
 
\end{lemma}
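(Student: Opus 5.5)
The three estimates are proved separately, each by the same template as in Lemma \ref{lem:L1}. Since $f_\varepsilon$ is smooth by Theorem \ref{th:global}, we test \eqref{Eq} (or the equation for a derivative) against a function multiplied by the cutoff $\zeta_\sigma$ from \eqref{zeta}. We integrate over $[\delta,t]$ and send $\sigma\to0$ using \eqref{gr.z}, dominated convergence and the integrability from Theorem \ref{th:global}. Finally we let $\delta\to0$ using $f_\varepsilon\in C([0,\infty);\Upsilon)$ and, for the derivative quantities, the $L^1$ contraction at the level of difference quotients. We write $f=f_\varepsilon$.

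\textbf{(i).} Multiply \eqref{Eq} by $f\zeta_\sigma$. The diffusion terms give $-\|\nabla_p f\|_2^2-\varepsilon\|\nabla_x f\|_2^2\le -\|\nabla_p f\|_2^2$.

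The $x$-transport term is $p\cdot\nabla_x(f-f^2)f=p\cdot\nabla_x\Phi(f)$ with $\Phi'(s)=s(1-2s)$. It is a pure divergence and vanishes in the limit.

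The $p$-drift term is $-\int p(f-f^2)\cdot\nabla_p f=-\int p\cdot\nabla_p\Psi(f)$ with $\Psi(s)=s^2/2-s^3/3$. Integrating by parts gives $d\int\Psi(f)\le \frac d2\int f^2\le \frac d2\|f\|_1$, where the last step uses $0\le f\le1$ (Corollary \ref{cor}).

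By mass conservation (Lemma \ref{l:mass}), integrating in time gives $\frac12\|f(t)\|_2^2+\int_0^t\|\nabla_pf\|_2^2\le \frac12\|f_0\|_2^2+\frac{dt}{2}\|f_0\|_1$. Finally $\|f_0\|_2^2\le\|f_0\|_1$ because $0\le f_0\le1$.

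\textbf{(ii).} The $x$-bound comes from translation invariance in $x$. The function $g(t,x,p)=f(t,x+h,p)$ solves \eqref{Eq} with datum $f_0(\cdot+h,\cdot)$, so Lemma \ref{lem:L1} gives $\|f(t,\cdot+h,\cdot)-f(t)\|_1\le\|f_0(\cdot+h,\cdot)-f_0\|_1\le |h|\,\|\nabla_xf_0\|_1$. Dividing by $|h|$ and letting $h\to0$ in each coordinate direction gives the claim. (Alternatively, differentiate the equation in $x_i$ and multiply by $\mathrm{sign}_\tau(\partial_{x_i}f)$ as in Lemma \ref{lem:L1}.)

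For $\nabla_p$ no translation invariance is available, so we differentiate \eqref{Eq} in $p_i$ and set $w_i=\partial_{p_i}f$. This gives
\[
\partial_t w_i+p\cdot\nabla_x((1-2f)w_i)+\partial_{x_i}(f-f^2)=\Delta_p w_i+\mathrm{div}_p(p(1-2f)w_i)+\partial_{p_i}(f-f^2)+\varepsilon\Delta_xw_i .
\]
The $p$-drift produces a term $\partial_{p_i}(f-f^2)=(1-2f)w_i$; that is exactly where the extra term in the displayed equation comes from. Multiply by $\mathrm{sign}_\tau(w_i)\zeta_\sigma$.
\begin{itemize}
\item The diffusion terms have a good sign.
\item The divergence-form transport terms $\mathrm{div}_x(p(1-2f)w_i)$ and $\mathrm{div}_p(p(1-2f)w_i)$ are handled exactly as in Lemma \ref{lem:L1}, via the identity $\mathrm{sign}_\tau'(w)w\nabla w=\nabla(w\,\mathrm{sign}_\tau(w)-|w|_\tau)$; they vanish as $\tau\to0$.
\item The source $(1-2f)w_i$ contributes at most $\|w_i\|_1$, since $|1-2f|\le1$.
\item The commutator $\partial_{x_i}(f-f^2)=(1-2f)\partial_{x_i}f$ contributes at most $\|\partial_{x_i}f\|_1\le\|\nabla_xf_0\|_1$ by the first part of (ii).
\end{itemize}
Summing over $i$ gives $\frac{d}{dt}\|\nabla_pf\|_1\le\|\nabla_pf\|_1+\|\nabla_xf_0\|_1$. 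Gronwall's lemma then yields the stated bound $(\|\nabla_pf_0\|_1+t\|\nabla_xf_0\|_1)e^t$.

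\textbf{(iii).} Use time-translation invariance: $g(t)=f(t+h)$ is the solution with datum $f(h)$. Lemma \ref{lem:L1} gives $\|f(t+h)-f(t)\|_1\le\|f(h)-f_0\|_1$, so after dividing by $h$ it suffices to show $\limsup_{h\to0}h^{-1}\|f(h)-f_0\|_1\le\|\mathcal L[f_0]+\varepsilon\Delta_xf_0\|_1$.

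To prove this, approximate $f_0$ by smooth data $f_0^n$ taking values in $[0,1]$ with $\mathcal L[f_0^n]+\varepsilon\Delta_xf_0^n\to \mathcal L[f_0]+\varepsilon\Delta_xf_0$ in $L^1$. For the $f^n$, which are smooth up to $t=0$, the bound $\|\partial_tf^n(t)\|_1\le\|\partial_tf^n(0)\|_1$ follows directly. We then pass to the limit using $L^1$ contraction and lower semicontinuity of the total variation in time.

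\textbf{Main obstacle.} I expect the hardest step to be the regularity justification near $t=0$. This includes choosing the mollification so that it preserves $0\le f_0^n\le1$ and the convergence of $\mathcal L[f_0^n]$. It also includes checking that $\partial_t f$, $\nabla_p f$, and their fluxes are integrable enough to discard the boundary terms as $\sigma\to0$. The weights $t^{1/2}$ in Theorem \ref{th:global} only control these quantities for $t>0$, which is why every argument above is first run on $[\delta,t]$.
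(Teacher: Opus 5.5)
Part (i) and the $\nabla_p$ bound in (ii) follow the paper's argument. Your $p_i$-differentiated equation has the correct sign on the commutator $-\partial_{x_i}(f-f^2)$; the paper writes $+(1-2f)u_i$, which is harmless after taking absolute values.

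For $\nabla_x f$ in (ii) and for (iii) you take a genuinely different route. The paper differentiates \eqref{Eq} in $x_i$ (resp.\ $t$) and runs the $\mathrm{sign}_\tau$ argument for the linearized equation \eqref{u_i} starting at $t=0$. You instead use invariance of \eqref{Eq} under $x$- and $t$-translations, Lemma \ref{lem:L1}, and Fatou on difference quotients at a fixed $t>0$. Your route is more robust. The paper tacitly uses $\partial_{x_i}f(t)\to\partial_{x_i}f_0$ and $\partial_t f(t)\to\mathcal{L}[f_0]+\varepsilon\Delta_x f_0$ in $L^1$ as $t\to0$, and for $\partial_t f$ also weighted integrability near $t=0$. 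None of this follows from Theorem \ref{th:global}, whose derivative bounds degenerate like $t^{-1/2}$. You only need $f\in C([0,\infty);L^1)$.

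The step you leave open in (iii) is the weak point. Approximating $f_0$ by smooth $f_0^n\in[0,1]$ with $\mathcal{L}[f_0^n]\to\mathcal{L}[f_0]$ in $L^1$ is not clearly feasible: $\mathcal{L}$ is nonlinear with the unbounded coefficient $p$, and only the combination $\mathcal{L}[f_0]$ is assumed integrable. It is also unnecessary. Set $S\colonequals\mathcal{L}[f_0]+\varepsilon\Delta_x f_0\in L^1$ and $z\colonequals f-f_0$. Then
\[
\partial_t z=\Delta_p z+\varepsilon\Delta_x z+\mathrm{div}_p\bigl(pz(1-f-f_0)\bigr)-\mathrm{div}_x\bigl(pz(1-f-f_0)\bigr)+S .
\]
Test this with $\mathrm{sign}_\tau(z)\zeta_\sigma$ on $[\delta,h]$ exactly as in Lemma \ref{lem:L1}. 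Use Kato's inequality for the diffusion terms, treat $S$ as an $L^1$ source, and use $\|z(\delta)\|_1\to0$. This gives $\|f(h)-f_0\|_1\le h\|S\|_1$ for every $h>0$, which is the estimate the paper itself uses at the end of the proof of Theorem \ref{th:global existence}. Your time-shift reduction and Fatou then finish (iii).

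Similarly, your claim that $\delta\to0$ is handled ``by difference quotients'' does not cover $\nabla_p f$ as stated. Shifts in $p$ do not preserve \eqref{Eq}, and Theorem \ref{th:global} gives no control of $\|\nabla_p f(\delta)\|_1$ as $\delta\to0$; the paper has the same loose end. It can be fixed in the same spirit. The function $g(t,x,p)\colonequals f(t,x,p+\lambda e_i)$ solves \eqref{Eq} plus the source $\lambda(1-2g)(\partial_{p_i}g-\partial_{x_i}g)$. Apply the $L^1$-contraction argument with this source, divide by $|\lambda|$, and use Fatou together with your $x$-bound to get
\[
\|\partial_{p_i}f(t)\|_1\le\|\partial_{p_i}f_0\|_1+t\|\partial_{x_i}f_0\|_1+\int_0^t\|\partial_{p_i}f(s)\|_1\,ds .
\]
Gronwall then yields the stated bound, with no boundary term at $t=\delta$.
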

\begin{proof}
 We omit the dependence of the solution $f\colonequals f_{\varepsilon}$ on $\varepsilon$ for notational convenience.

\textbf{Part (i):}  Let $\zeta_{\sigma}$ be the function defined in \eqref{zeta}. We multiply \eqref{Eq} by $\zeta_{\sigma}f$ and integrate by parts
\begin{align*}
\frac{1}{2}\frac{d}{dt}\int_{\mathbb{R}^{2d}}\zeta_{\sigma}  f^2dxdp=&-\varepsilon\int_{\mathbb{R}^{2d}} \zeta_{\sigma} |\nabla_x f|^2dxdp+\frac{\varepsilon}{2}\int_{\mathbb{R}^{2d}} \Delta_x\zeta_{\sigma}  f^2dxdp\\
&+\int_{\mathbb{R}^{2d}}\nabla_x \zeta_{\sigma}\cdot p f^2\Big(\frac{1}{2}-\frac{2f}{3}\Big)dxdp\\
&-\int_{\mathbb{R}^{2d}} \zeta_{\sigma} |\nabla_p f|^2dxdp+ \frac{1}{2}\int_{\mathbb{R}^{2d}} \Delta_p\zeta_{\sigma}  f^2dxdp\\
&-\int_{\mathbb{R}^{2d}}\nabla_p \zeta_{\sigma}\cdot p f^2\Big(\frac{1}{2}-\frac{2f}{3}\Big)dxdp+d\int_{\mathbb{R}^{2d}} \zeta_{\sigma} f^2\Big(\frac{1}{2}-\frac{f}{3}\Big) dxdp.
\end{align*} 
 We integrate this equation on the interval $[0,T]$ with $T>0$ and use $\varepsilon\int_{\mathbb{R}^{2d}} \zeta_{\sigma} |\nabla_x f|^2dxdp\geq 0$
 \begin{align*}
\frac{1}{2}\int_{\mathbb{R}^{2d}}\zeta_{\sigma}  f^2(T)dxdp\leq & \frac{1}{2}\int_{\mathbb{R}^{2d}}\zeta_{\sigma}  f^2_0dxdp-\frac{\varepsilon}{2}\int_0^T\int_{\mathbb{R}^{2d}} \Delta_x\zeta_{\sigma}  f^2dxdpdt\\
&+\int_0^T\int_{\mathbb{R}^{2d}}\nabla_x \zeta_{\sigma}\cdot p f^2\Big(\frac{1}{2}-\frac{2f}{3}\Big)dxdpdt\\
&-\int_0^T\int_{\mathbb{R}^{2d}} \zeta_{\sigma} |\nabla_p f|^2dxdpdt+ \frac{1}{2}\int_0^T\int_{\mathbb{R}^{2d}} \Delta_p\zeta_{\sigma}  f^2dxdpdt\\
&-\int_0^T\int_{\mathbb{R}^{2d}}\nabla_p \zeta_{\sigma}\cdot p f^2\Big(\frac{1}{2}-\frac{2f}{3}\Big)dxdpdt+d\int_0^T\int_{\mathbb{R}^{2d}} \zeta_{\sigma} f^2\Big(\frac{1}{2}-\frac{f}{3}\Big)dxdpdt.
\end{align*}
The functions $\Delta_x\zeta_{\sigma} $, $\Delta_x\zeta_{\sigma} $, $\nabla_x \zeta_{\sigma}$, and $\nabla_p \zeta_{\sigma}$ converge to zero uniformly by \eqref{gr.z}. Since $f$ is bounded and in $(1+|p|)f\in L^{1}([0,T]\times \mathbb{R}^{2d})$, we can pass to the limit as $\sigma\to 0$
\begin{align*}
\frac{1}{2}\int_{\mathbb{R}^{2d}}  f^2(T)dxdp\leq & \frac{1}{2}\int_{\mathbb{R}^{2d}}  f^2_0dxdp
-\int_0^T\int_{\mathbb{R}^{2d}}  |\nabla_p f|^2dxdpdt+d\int_0^T\int_{\mathbb{R}^{2d}}  f^2\Big(\frac{1}{2}-\frac{f}{3}\Big)dxdpdt.
\end{align*}
Since $0\leq f_0, f\leq 1$ and the mass is conserved, we can estimate $$\frac{1}{2}\int_{\mathbb{R}^{2d}}  f^2_0dxdp\leq \frac{1}{2}\int_{\mathbb{R}^{2d}}  f_0dxdp$$
and 
$$d\int_0^T\int_{\mathbb{R}^{2d}}  f^2\Big(\frac{1}{2}-\frac{f}{3}\Big) dxdpdt\leq \frac{d}{2}\int_0^T\int_{\mathbb{R}^{2d}}  fdxdpdt\leq \frac{dT}{2}\int_{\mathbb{R}^{2d}}  f_0dxdp.$$
These estimates provide
\begin{align*}
\frac{1}{2}\int_{\mathbb{R}^{2d}}  f^2(T)dxdp+\int_0^T\int_{\mathbb{R}^{2d}}  |\nabla_p f|^2dxdpdt\leq  \frac{1+dT}{2}\int_{\mathbb{R}^{2d}}  f_0dxdp.
\end{align*}
Since $T>0$ is arbitrary, we obtain the  claimed result.

\textbf{Part (ii):} We first estimate $||\nabla_x f_{\varepsilon}(t)||_{L^1(\mathbb{R}^{2d})} $.
Since the solution $f(t,x,p)$ is smooth for $t>0$, its partial derivatives $u_i\colonequals \partial_{x_i}f$ for $i\in \{1,..., d\}$ satisfy
\begin{equation}\label{u_i}
\partial_t u_i+p\cdot \nabla_x ((1-2f)u_i)=\mathrm{div}_{p}(\nabla_p u_i+p(1-2f)u_i)+\varepsilon \Delta_x u_i.
\end{equation}
Let $\zeta_{\sigma}$ and $\text{sign}_{\tau}$ be the functions defined in \eqref{zeta} and \eqref{sign}, respectively. We multiply the above equation by $\zeta_{\sigma}\text{sign}_{\tau}(u_i)$ and integrate by parts
\begin{align*}
\frac{d}{dt}& \int_{\mathbb{R}^{2d}}\zeta_{\sigma}|u_i|_{\tau} dpdx\\=&- \int_{\mathbb{R}^{2d}}\zeta_{\sigma}  \text{sign}_{\tau}'(u_i)|\nabla_pu_i|^2dpdx- \varepsilon\int_{\mathbb{R}^{2d}}\zeta_{\sigma}  \text{sign}_{\tau}'(u_i)|\nabla_x u_i|^2dpdx\\
&+\varepsilon \int_{\mathbb{R}^{2d}}(\text{sign}_{\tau}'(u_i) \nabla_x\zeta_{\sigma}\cdot \nabla_xu_i u_i+\text{sign}_{\tau}(u_i)\Delta_x  \zeta_{\sigma} u_i)  dpdx
\\&
+\int_{\mathbb{R}^{2d}}(\text{sign}_{\tau}'(u_i) \nabla_p\zeta_{\sigma}\cdot \nabla_pu_i u_i+\text{sign}_{\tau}(u_i)\Delta_p \zeta_{\sigma} u_i)dpdx\\
&+ \int_{\mathbb{R}^{2d}}\text{sign}_{\tau}(u_i) \nabla_x\zeta_{\sigma}\cdot  p(1-2f)u_i dpdx- \int_{\mathbb{R}^{2d}}\text{sign}_{\tau}(u_i) \nabla_p\zeta_{\sigma}\cdot  p(1-2f)u_i dpdx\\
&- \int_{\mathbb{R}^{2d}}\zeta_{\sigma} \text{sign}_{\tau}'(u_i)p\cdot\nabla_pu_i(1- 2f)u_idpdx+ \int_{\mathbb{R}^{2d}}\zeta_{\sigma} \text{sign}_{\tau}'(u_i)p\cdot\nabla_x u_i (1- 2f)u_idpdx.
 \end{align*}
  We integrate this equation on the interval $[0,T],\, T>0,$ and  use
 $\zeta_{\sigma}  \text{sign}_{\tau}'(u_i)|\nabla_pu_i|^2\geq 0$ and $\zeta_{\sigma}  \text{sign}_{\tau}'(u_i)|\nabla_xu_i|^2 \geq 0$ to estimate
 \begin{align*}
 &\int_{\mathbb{R}^{2d}}\zeta_{\sigma}|u_i(T)|_{\tau} dpdx-\int_{\mathbb{R}^{2d}}\zeta_{\sigma}|u_i(0)|_{\tau} dpdx\\
&\leq \varepsilon \int_0^T\int_{\mathbb{R}^{2d}}(\text{sign}_{\tau}'(u_i) \nabla_x\zeta_{\sigma}\cdot \nabla_xu_i u_i+\text{sign}_{\tau}(u_i)\Delta_x  \zeta_{\sigma} u_i)  dpdxdt
\\&
+\int_0^T\int_{\mathbb{R}^{2d}}(\text{sign}_{\tau}'(u_i) \nabla_p\zeta_{\sigma}\cdot \nabla_pu_i u_i+\text{sign}_{\tau}(u_i)\Delta_p \zeta_{\sigma} u_i)dpdxdt\\
&+ \int_0^T\int_{\mathbb{R}^{2d}}\text{sign}_{\tau}(u_i) \nabla_x\zeta_{\sigma}\cdot  p(1-2f)u_i dpdxdt- \int_0^T\int_{\mathbb{R}^{2d}}\text{sign}_{\tau}(u_i) \nabla_p\zeta_{\sigma}\cdot  p(1-2f)u_i dpdxdt\\
&- \int_0^T\int_{\mathbb{R}^{2d}}\zeta_{\sigma} \text{sign}_{\tau}'(u_i)p\cdot\nabla_pu_i(1- 2f)u_idpdxdt+ \int_0^T\int_{\mathbb{R}^{2d}}\zeta_{\sigma} \text{sign}_{\tau}'(u_i)p\cdot\nabla_x u_i (1- 2f)u_idpdxdt.
 \end{align*}
 Using the identities
 $$\text{sign}_{\tau}'(u_i)\nabla_xu_iu_i =\nabla_x(u_i\text{sign}_{\tau}(u_i)-|u_i|_{\tau}),$$
 $$\text{sign}_{\tau}'(u_i)\nabla_pu_iu_i=\nabla_p(u_i\text{sign}_{\tau}(u_i)-|u_i|_{\tau}),$$
 we integrate by parts
\begin{align*}
& \int_{\mathbb{R}^{2d}}\zeta_{\sigma}|u_i(T)|_{\tau} dpdx-\int_{\mathbb{R}^{2d}}\zeta_{\sigma}|u_i(0)|_{\tau} dpdx\\
&\leq \varepsilon \int_0^T\int_{\mathbb{R}^{2d}} \Delta_x\zeta_{\sigma} |u_i|_{\tau}  dpdxdt
+\int_0^T\int_{\mathbb{R}^{2d}} \Delta_p\zeta_{\sigma}|u_i|_{\tau} dpdxdt\\
&+ \int_0^T\int_{\mathbb{R}^{2d}}\text{sign}_{\tau}(u_i) \nabla_x\zeta_{\sigma}\cdot  p(1-2f)u_i dpdxdt- \int_0^T\int_{\mathbb{R}^{2d}}\text{sign}_{\tau}(u_i) \nabla_p\zeta_{\sigma}\cdot  p(1-2f)u_i dpdxdt\\
&+ \int_0^T\int_{\mathbb{R}^{2d}}\mathrm{div}_p(p\zeta_{\sigma} (1- 2f))(u_i\text{sign}_{\tau}(u_i)-|u_i|_{\tau})dpdxdt\\
&- \int_0^T\int_{\mathbb{R}^{2d}}\mathrm{div}_x(p\zeta_{\sigma}  (1- 2f))(u_i\text{sign}_{\tau}(u_i)-|u_i|_{\tau})dpdxdt.
 \end{align*}
 Since $|u_i|_{\tau}\to |u_i|,$ $\text{sign}_{\tau}(u_i)\to \text{sign}(u_i),$ and $u_i\text{sign}_{\tau}(u_i)-|u_i|_{\tau}\to 0$ pointwise  as $\tau\to 0,$ we conclude that 
\begin{align*}
& \int_{\mathbb{R}^{2d}}\zeta_{\sigma}|u_i(T)| dpdx-\int_{\mathbb{R}^{2d}}\zeta_{\sigma}|u_i(0)| dpdx\\
&\leq \varepsilon \int_0^T\int_{\mathbb{R}^{2d}} \Delta_x\zeta_{\sigma} |u_i|  dpdxdt
+\int_0^T\int_{\mathbb{R}^{2d}} \Delta_p\zeta_{\sigma}|u_i| dpdxdt\\
&+ \int_0^T\int_{\mathbb{R}^{2d}}\text{sign}(u_i) \nabla_x\zeta_{\sigma}\cdot  p(1-2f)u_i dpdxdt- \int_0^T\int_{\mathbb{R}^{2d}}\text{sign}(u_i) \nabla_p\zeta_{\sigma}\cdot  p(1-2f)u_i dpdxdt.
 \end{align*}
The terms   $|\nabla_{x,p}\zeta_{\sigma}|$ and $|\Delta_{x,p} \zeta_{\sigma}|$ converge to zero uniformly as $\sigma\to 0$ by \eqref{gr.z}. We have $|1-2f|\leq 1$  and $ (1+|p|)u_i\in L^1([0,T]\times \mathbb{R}^{2d}) $ by Theorem \ref{th:reg}. Hence,     we can pass to the limit as  $\sigma \to 0$ 
  and obtain 
 $$\int_{\mathbb{R}^{2d}}|u_i(T)| dpdx\leq \int_{\mathbb{R}^{2d}}|u_i(0)| dpdx.$$ 
 Since $T>0$ is arbitrary, we obtain the  claimed result.
 
Next, we estimate $||\nabla_p f_{\varepsilon}(t)||_{L^1(\mathbb{R}^{2d})} $: The partial derivatives $v_i\colonequals \partial_{p_i}f$ for $i\in \{1,...,d\}$ satisfy 
 \begin{equation*}
\partial_t v_i+p\cdot \nabla_x ((1-2f)v_i)=\mathrm{div}_{p}(\nabla_p v_i+p(1-2f)v_i)+\varepsilon \Delta_x v_i+(1-2f)u_i+(1-2f)v_i.
\end{equation*}
This equation differs from  \eqref{u_i} only by the term $(1-2f)u_i+(1-2f)v_i.$ Hence, we repeat the above computations  by only caring this term and obtain  $$\int_{\mathbb{R}^{2d}}|v_i(T)| dpdx- \int_{\mathbb{R}^{2d}}|v_i(0)| dpdx\leq \int^T_0 \int_{\mathbb{R}^{2d}}\mathrm{sign}(v_i)(1-2f)u_idxdpdt+\int^T_0\int_{\mathbb{R}^{2d}} |v_i|(1-2f)dxdpdt$$
for all $T>0.$  Using $|\mathrm{sign}(v_i)|\leq 1$, $|1-2f|\leq 1$, and $\int_{\mathbb{R}^{2d}}|u_i(t)| dpdx\leq \int_{\mathbb{R}^{2d}}|u_i(0)| dpdx$ for all $t\geq 0,$ we estimate 
$$\int_{\mathbb{R}^{2d}}|v_i(T)| dpdx- \int_{\mathbb{R}^{2d}}|v_i(0)| dpdx\leq  T\int_{\mathbb{R}^{2d}}|u_i(0)|dxdp+\int^T_0 \int_{\mathbb{R}^{2d}}|v_i| dxdpdt.$$
Grönwall's inequality implies 
$$\int_{\mathbb{R}^{2d}}|v_i(T)| dpdx\leq \left(\int_{\mathbb{R}^{2d}}|v_i(0)| dpdx+T\int_{\mathbb{R}^{2d}}|u_i(0)|dxdp\right) e^T.$$
Since $T>0$ is arbitrary, we obtain the  claimed result.

\textbf{Part (iii):} The time derivative $ \partial_tf$ satisfies 
\begin{equation*}
\partial_t(\partial_tf)+p\cdot \nabla_x ((1-2f)(\partial_tf))=\mathrm{div}_{p}(\nabla_p (\partial_tf)+p(1-2f)(\partial_tf))+\varepsilon \Delta_x (\partial_tf).
\end{equation*}
This equation is the exactly same as \eqref{u_i}, for which we have proved in Part (ii) that
$$||\partial_tf(t)||_{L^1(\mathbb{R}^{2d})}\leq ||\partial_tf (0)||_{L^1(\mathbb{R}^{2d})},\,\,\,\,\forall\, t\geq 0. $$ 
The assumptions on $f_0$ and  $\partial_tf (0)=\mathcal{L}[f_0]+\varepsilon\Delta_x f_0$ yield the claimed estimate. 
\end{proof}
\begin{lemma}
Let $k>2d$,  $f_0\in L^1_{k}(\mathbb{R}^{2d}) $, and $0\leq f_0\leq 1$ for almost every $(x,p)\in \mathbb{R}^{2d}.$  Let $f_{\varepsilon}(t,x,p)$ be the global solution to  the Cauchy problem \eqref{Eq} in Theorem \ref{th:global}. 
For any smooth compactly supported $\phi\in C_c^{\infty}((0, \infty)\times \mathbb{R}^{2d}) $ with $\phi\geq 0$ and any constant $a\in \mathbb{R}$ 
\begin{align}\label{epdef}
\int_0^{\infty}\int_{\mathbb{R}^{2d}}\mathrm{sign}(f_{\varepsilon}-a)&\big\{(f_{\varepsilon}-a)\partial_t \phi +(f_{\varepsilon}-f^2_{\varepsilon}-(a-a^2))(p\cdot \nabla_x \phi-p\cdot \nabla_p \phi)\nonumber \\
&-\nabla_p f_{\varepsilon}\cdot\nabla_p \phi+\varepsilon (f_{\varepsilon}-a) \Delta_x \phi+d(a-a^2)\phi \big\}dxdpdt\geq 0.
\end{align}
\end{lemma}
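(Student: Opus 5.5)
Since $f_\varepsilon$ is smooth on $(0,\infty)\times\mathbb{R}^{2d}$ (Theorem \ref{th:global}) and satisfies \eqref{Eq} classically, I would derive \eqref{epdef} by the standard Kruzhkov computation. I regularize the sign with $\mathrm{sign}_\tau$ from \eqref{sign}, write $|\cdot|_\tau$ for its primitive, and then let $\tau\to0$. Because $\phi$ has compact support in $(0,\infty)\times\mathbb{R}^{2d}$, no cutoff $\zeta_\sigma$ is needed and every integration by parts is legitimate.

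First I rewrite \eqref{Eq} for $w\colonequals f_\varepsilon-a$, using that the constant $a$ is killed by derivatives:
\begin{align*}
\partial_t w+p\cdot\nabla_x\big(F(f_\varepsilon)-F(a)\big)
=\Delta_p w+\varepsilon\Delta_x w+\mathrm{div}_p\big(p(F(f_\varepsilon)-F(a))\big)+dF(a),
\end{align*}
where $F(s)=s-s^2$. The last term comes from $\mathrm{div}_p(pF(a))=dF(a)$. I multiply by $\mathrm{sign}_\tau(w)\phi$ and integrate over $(0,\infty)\times\mathbb{R}^{2d}$, treating the terms in order.

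\textbf{Time term.} It gives $-\int|w|_\tau\partial_t\phi$.

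\textbf{Diffusion terms.} The $p$-diffusion gives
$$-\int\mathrm{sign}_\tau'(w)|\nabla_p w|^2\phi-\int\mathrm{sign}_\tau(w)\nabla_p f_\varepsilon\cdot\nabla_p\phi .$$
The first part is $\le0$ and is dropped. The $x$-diffusion is handled the same way: dropping the nonpositive term and writing $\mathrm{sign}_\tau(w)\nabla_x w=\nabla_x|w|_\tau$, a second integration by parts leaves $\varepsilon\int|w|_\tau\Delta_x\phi$.

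\textbf{Transport terms.} Set $Q\colonequals F(f_\varepsilon)-F(a)$. After integrating by parts, both transport terms produce $\int\mathrm{sign}_\tau(w)\,Q\,(p\cdot\nabla_x\phi-p\cdot\nabla_p\phi)$, plus remainders
$$\int\phi\,\mathrm{sign}_\tau'(w)\,Q\,p\cdot(\nabla_x w-\nabla_p w).$$
These remainders are the main obstacle. I handle them using $Q=w(1-f_\varepsilon-a)$, so that
$$\mathrm{sign}_\tau'(w)\,Q\,\nabla w=(1-f_\varepsilon-a)\,\nabla\big(w\,\mathrm{sign}_\tau(w)-|w|_\tau\big).$$
One more integration by parts moves the derivative onto $\phi p(1-f_\varepsilon-a)$, exactly as in Lemma \ref{lem:L1}. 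Since $f_\varepsilon$ is smooth with locally bounded derivatives on the support of $\phi$, and $w\,\mathrm{sign}_\tau(w)-|w|_\tau\to0$ boundedly (pointwise, with bound $\le C\tau$), these remainders vanish as $\tau\to0$.

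\textbf{Zeroth-order term.} It gives $dF(a)\int\mathrm{sign}_\tau(w)\phi$.

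Collecting terms and passing $\tau\to0$ by dominated convergence (everything is bounded and compactly supported) yields
$$0\le\int\big\{|w|\partial_t\phi+\mathrm{sign}(w)Q(p\cdot\nabla_x\phi-p\cdot\nabla_p\phi)-\mathrm{sign}(w)\nabla_pf_\varepsilon\cdot\nabla_p\phi+\varepsilon|w|\Delta_x\phi+d(a-a^2)\mathrm{sign}(w)\phi\big\}.$$
This is \eqref{epdef}, using $\mathrm{sign}(w)w=|w|$.

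Convergence of $\mathrm{sign}_\tau(w)\to\mathrm{sign}(w)$ on $\{w=0\}$ needs a word. Every term involving $\mathrm{sign}_\tau$ there is multiplied either by $w$, by $Q$ (which vanishes when $w=0$), by $\nabla_p f_\varepsilon=\nabla_p w$, or by the constant $dF(a)$. For $\nabla_p w$, note that $\nabla_p w=0$ a.e. on $\{w=0\}$. For the constant term, use $\mathrm{sign}_\tau(0)=0=\mathrm{sign}(0)$ since $\eta_\tau$ is odd. Hence the limit is unambiguous.
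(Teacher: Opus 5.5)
Your proposal is correct and follows the paper's proof essentially step for step. You rewrite the equation for $f_\varepsilon-a$ with the source $d(a-a^2)$, test with $\phi\,\mathrm{sign}_\tau(f_\varepsilon-a)$, drop the nonnegative dissipation terms, absorb the transport remainders via $\mathrm{sign}_\tau'(w)\,w\nabla w=\nabla\big(w\,\mathrm{sign}_\tau(w)-|w|_\tau\big)$, and let $\tau\to 0$; your explicit bound $\big|w\,\mathrm{sign}_\tau(w)-|w|_\tau\big|\le C\tau$ and the observation $\mathrm{sign}_\tau(0)=0$ make that limit slightly more transparent than in the paper.
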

\begin{proof}  We omit the dependence of the solution $f\colonequals f_{\varepsilon}$ on $\varepsilon$ for notational convenience.

 It is easy to check that $f-a$ satisfies
\begin{equation*}
\partial_t (f-a)+p\cdot \nabla_x (f-f^2-(a-a^2))=\mathrm{div}_{p}(\nabla_p f+p(f-f^2-(a-a^2)))+\varepsilon \Delta_x (f-a)+d(a-a^2).
\end{equation*}
Let $\text{sign}_{\tau}$ be the function defined in \eqref{sign}. We multiply the above equation by $\phi  \text{sign}_{\tau}(f-a)$ and integrate by parts
\begin{align*}
\int_0^{\infty} \int_{\mathbb{R}^{2d}}&\Big( \partial_t \phi |f-a|_{\tau}+\text{sign}_{\tau}(f-a)(p\cdot \nabla_x \phi  -p\cdot \nabla_p \phi)(f-f^2-(a-a^2))
\\&-\text{sign}_{\tau}(f-a) \nabla_p f\cdot \nabla_p \phi -\varepsilon \text{sign}_{\tau}(f-a) \nabla_x f\cdot \nabla_x \phi +d(a-a^2)\phi \text{sign}_{\tau}(f-a)\\
&+\phi  \text{sign}_{\tau}'(f-a)(p\cdot \nabla_x (f-a)-p\cdot \nabla_p (f-a)) (f-f^2-(a-a^2))\\
&-\phi  \text{sign}_{\tau}'(f-a)(|\nabla_pf|^2+ \varepsilon |\nabla_x f|^2)\Big)dxdpdt=0.
\end{align*}
Since $\text{sign}_{\tau}(f-a) \nabla_x f=\nabla_x |f-a|_{\tau}$ and  $\phi  \text{sign}_{\tau}'(f-a)(|\nabla_pf|^2+ \varepsilon |\nabla_x f|^2)$ is nonnegative, we have 
\begin{align*}
\int_0^{\infty} \int_{\mathbb{R}^{2d}}&\Big( \partial_t \phi |f-a|_{\tau}+\text{sign}_{\tau}(f-a)(p\cdot \nabla_x \phi  -p\cdot \nabla_p \phi)(f-f^2-(a-a^2))
\\&-\text{sign}_{\tau}(f-a) \nabla_p f\cdot \nabla_p \phi +\varepsilon |f-a|_{\tau}  \Delta_x \phi +d(a-a^2)\phi \text{sign}_{\tau}(f-a)\\
&+\phi  \text{sign}_{\tau}'(f-a)(p\cdot \nabla_x (f-a)-p\cdot \nabla_p (f-a)) (f-f^2-(a-a^2))\geq 0.
\end{align*}
Using the identities
 $$\text{sign}_{\tau}'(f-a)\nabla_x(f-a)(f-a)=\nabla_x((f-a)\text{sign}_{\tau}(f-a)-|f-a|_{\tau}),$$
 $$\text{sign}_{\tau}'(f-a)\nabla_p(f-a)(f-a)=\nabla_p((f-a)\text{sign}_{\tau}(f-a)-|f-a|_{\tau}),$$
 we integrate by parts
\begin{align*}
\int_0^{\infty}& \int_{\mathbb{R}^{2d}}\Big( \partial_t \phi |f-a|_{\tau}+\text{sign}_{\tau}(f-a)(p\cdot \nabla_x \phi  -p\cdot \nabla_p \phi)(f-f^2-(a-a^2))
\nonumber\\&-\text{sign}_{\tau}(f-a) \nabla_p f\cdot \nabla_p \phi +\varepsilon |f-a|_{\tau}  \Delta_x \phi +d(a-a^2)\phi \text{sign}_{\tau}(f-a)\nonumber \\
& -((f-a)\text{sign}_{\tau}(f-a)-|f-a|_{\tau})(\mathrm{div}_x(p\phi(1-f-a))-\mathrm{div}_p( p\phi (1-f-a))\Big)dxdpdt\geq 0.
\end{align*}
Since $|f-a|_{\tau}\to |f-a|$, $\text{sign}_{\tau}(f-a)\to \text{sign}(f-a)$, and $(f-a)\text{sign}_{\tau}(f-a)-|f-a|_{\tau}\to 0$  pointwise  as $\tau \to 0, $ we get by the Lebesgue dominated convergence theorem 
\begin{align*}
\int_0^{\infty}& \int_{\mathbb{R}^{2d}}\Big( \partial_t \phi |f-a|+\text{sign}(f-a)(p\cdot \nabla_x \phi  -p\cdot \nabla_p \phi)(f-f^2-(a-a^2))
\\&-\text{sign}(f-a) \nabla_p f\cdot \nabla_p \phi +\varepsilon |f-a|  \Delta_x \phi +d(a-a^2)\phi \text{sign} (f-a)\geq 0,
\end{align*}
which is the claimed inequality.

\end{proof}

\subsection{Passing to the limit $\varepsilon\to 0$}

\begin{proof}[\textbf{Proof of Theorem \ref{th:global existence}}]
By Theorem \ref{th:global} and Lemma \ref{l:epsilon} 
for every $T>0$, 
$$\int_0^T \int_{\mathbb{R}^{2d}} \big(f_{\varepsilon}(t,x,p)+ |\partial_t f_{\varepsilon}(t,x,p)|+ |\nabla_x f_{\varepsilon}(t,x,p)|+ |\nabla_p f_{\varepsilon}(t,x,p)|\big)dxdpdt$$ is uniformly bounded with respect to $\varepsilon\in (0, 1].$  This fact and  \cite[Proposition 9.3]{Bre} imply 
$$\lim_{s\searrow 0,\, |y|\to 0,\,|p|\to 0}\sup_{\varepsilon\in (0,1]}\int_0^T\int_{\mathbb{R}^{2d}}|f_{\varepsilon}(t+s,x+y, p+q)-f_{\varepsilon}(t,x,p)|dxdpdt=0.$$ Then, by the Kolmogorov-Riesz-Fréchet theorem \cite[Theorem 4.26]{Bre} and a diagonal argument, there exists a sequence $\varepsilon_n\to 0$ such that $ f_{\varepsilon_n}$ converges to a function $f$ in $L^1_{\mathrm{loc}}((0,\infty)\times \mathbb{R}^{2d})$ as $n\to \infty.$ 

We show that the limit $f$ is an entropy solution to \eqref{DEq}, i.e., we show $f$ satisfies the conditions in Definition \ref{def:gen.sol}: We may assume that $f_{\varepsilon_n}$ converges to $f$ pointwise almost everywhere in $(0,\infty)\times \mathbb{R}^{2d}$. This implies 
$$0\leq f\leq 1$$
almost everywhere in $(0,\infty)\times \mathbb{R}^{2d}$.

  The pointwise convergence and Fatou's lemma imply $$f \in L^1([0,T]\times \mathbb{R}^{2d})$$ for  every $T>0$.

 Next, we show that $f$ is in $BV((0,T)\times \mathbb{R}^{2d})$ for every $T>0$: Let $\phi\in [C^1_c((0,T)\times \mathbb{R}^{2d})]^{2d+1}$ and $||\phi||_{L^{\infty}([0,T]\times \mathbb{R}^{2d})}\leq 1. $ Since $\phi$ has a compact support and $ f_{\varepsilon_n}\to f$ in $L^1_{\mathrm{loc}}((0,\infty)\times \mathbb{R}^{2d}),$ we have
\begin{align*}
\left|\int_0^{T}\int_{\mathbb{R}^{2d}}f\mathrm{div}_{t,x,p}\phi dxdpdt\right|&=\left|\lim_{n\to \infty}\int_0^{T}\int_{\mathbb{R}^{2d}}f_{\varepsilon_n}\mathrm{div}_{t,x,p}\phi dxdpdt\right|\\
&=\left|\lim_{n\to \infty}\int_0^{T}\int_{\mathbb{R}^{2d}}\nabla_{t,x,p}f_{\varepsilon_n}\cdot \phi dxdpdt\right|\\
&\leq \lim_{n\to \infty}\int_0^{T}\int_{\mathbb{R}^{2d}}|\nabla_{t,x,p}f_{\varepsilon_n}| dxdpdt ,
\end{align*}
where the last inequality follows from $||\phi||_{L^{\infty}((0,T)\times \mathbb{R}^{2d})}\leq 1.$
Since the right hand side of this equations is bounded by a constant independent of $\varepsilon_n$, we conclude that the total variation of $f$
\begin{align*}
\sup_{\phi}\left\{\int_0^{T}\int_{\mathbb{R}^{2d}}f\mathrm{div}_{t,x,p}\phi dxdpdt:\,\,\, \phi\in [C^1_c((0,T)\times \mathbb{R}^{2d})]^{2d+1}\,\,\, \text{and}\,\,\, ||\phi||_{L^{\infty}((0,T)\times \mathbb{R}^{2d})}\leq 1\right\}
\end{align*}
is finite. This fact and \cite[Proposition 3.6]{Amb} imply that $f\in BV((0,T)\times \mathbb{R}^{2d})$ for every $T>0$.

Lemma \ref{l:epsilon} shows that $||\nabla_p f_{\varepsilon_n}||_{L^2((0,T)\times \mathbb{R}^{2d})}$ is uniformly bounded  for every $T>0$.  Without loss generality, we may assume that the entire sequence $\nabla_p f_{\varepsilon_n}$ converges weakly to a vector function $F\in [L^2((0,T)\times \mathbb{R}^{2d})]^d$ . Let $\psi $ be a smooth compactly supported function in  $(0,T)\times \mathbb{R}^{2d}).$ 
Since  $ f_{\varepsilon_n}\to f$ in $L^1_{\mathrm{loc}}((0,\infty)\times \mathbb{R}^{2d})$ and $\nabla_p f_{\varepsilon_n} \rightharpoonup F$ as $n\to \infty$,   we have
\begin{align*}
\int_0^T\int_{\mathbb{R}^{2d}}f \nabla_p \psi dxdt&=\lim_{n\to \infty} \int_0^T\int_{\mathbb{R}^{2d}}f_{\varepsilon_n}\nabla_p \psi dxdt\\
&=-\lim_{n\to \infty} \int_0^T\int_{\mathbb{R}^{2d}}\nabla_p f_{\varepsilon_n} \psi dxdt\\
&=-\int_0^T\int_{\mathbb{R}^{2d}}F  \psi dxdt.
\end{align*}
Thus, by the definition of weak derivatives
$\nabla_p f=F\in [L^2((0,T)\times \mathbb{R}^{2d})]^d. $

We write  \eqref{epdef}  for $f_{\varepsilon_n}$
\begin{align*}
\int_0^{\infty}\int_{\mathbb{R}^{2d}}\mathrm{sign}(f_{\varepsilon_n}-a)&\big\{(f_{\varepsilon_n}-a)\partial_t \varphi +(f_{\varepsilon_n}-f^2_{\varepsilon_n}-(a-a^2))(p\cdot \nabla_x \varphi-p\cdot \nabla_p \varphi)\nonumber \\
&-\nabla_p f_{\varepsilon_n}\cdot\nabla_p \varphi+\varepsilon (f_{\varepsilon_n}-a) \Delta_x \varphi+d(a-a^2)\varphi \big\}dxdpdt\geq 0,
\end{align*}
where $\varphi\in C_c^{\infty}((0, \infty)\times \mathbb{R}^{2d}) $ with $\varphi\geq 0.$ As $\varphi$ has a compact support, we can pass to the limit  and get  \begin{align*}
\int_0^{\infty}\int_{\mathbb{R}^{2d}}\mathrm{sign}(f-a)&\big\{(f-a)\partial_t \varphi +(f-f^2-(a-a^2))(p\cdot \nabla_x \varphi-p\cdot \nabla_p \varphi)\nonumber \\
&-\nabla_p f\cdot\nabla_p \varphi+d(a-a^2)\varphi \big\}dxdpdt\geq 0.
\end{align*}
 It remains to show that $ \lim_{t \searrow 0} \int_{\mathbb{R}^{2d}}|f(t,x,p)-f_0(x,p)|dxdp=0.$ The function $f_{\varepsilon_n}-f_0$ satisfies 
 \begin{align*}
 \partial_t (f_{\varepsilon_n}-f_0)=&-p\cdot \nabla_x (f_{\varepsilon_n}-f^2_{\varepsilon_n}-(f_0-f_0^2))+\mathrm{div}_{p}(\nabla_p f_{\varepsilon_n}+p(f_{\varepsilon_n}-f^2_{\varepsilon_n}-(f_0-f_0^2)))\\
 &+\varepsilon \Delta_x (f_{\varepsilon_n}-f_0)
 +L[f_0]+\varepsilon \Delta_x f_0.
 \end{align*}
 By repeating the arguments in the proof of Lemma \ref{lem:L1}, one can prove
 $$||f_{\varepsilon_n}(t)-f_0||_{L^1(\mathbb{R}^{2d})}\leq  t||L[f_0]||_{L^1(\mathbb{R}^{2d})}+t\varepsilon ||\Delta_x f_0||_{L^1(\mathbb{R}^{2d})},\,\,\,\,\, \forall\,t>0.$$ Since $f_{\varepsilon_n}-f_0$ converges to $f-f_0$ pointwise for almost everywhere, Fatou's lemma provides 
 $$||f(t)-f_0||_{L^1(\mathbb{R}^{2d})}\leq  t||L[f_0]||_{L^1(\mathbb{R}^{2d})}$$
 for almost every $t\in (0,\infty).$ Hence, there  is a set $\mathcal{E}  $  of zero measure on $(0,1)$ such that 
$$ \lim_{t \to 0,\, t\in (0,1) \setminus \mathcal{E}}||f(t)-f_0||_{L^1(\mathbb{R}^{2d})}=0.$$
The above properties of $f$  shows that it is an entropy solution. Its uniqueness follows from Theorem \ref{th:unique}.

 For all $\gamma\in [1,k],$ the $\gamma$-moment $\int_{\mathbb{R}^{2d}}|p|^{\gamma}f_{\varepsilon_n}(t,x,p)dxdp$ is bounded by a polynomial $P_{\lceil \gamma/2\rceil}(t)$ (independent of $\varepsilon_n$) of degree $\lceil \gamma/2\rceil $.  
This fact and  Fatou's lemma imply that $\int_{\mathbb{R}^{2d}}|p|^{\gamma}f(t,x,p)dxdp$ is also bounded by the same polynomial for almost every $t\in (0,\infty)$.

 Let $\zeta_{\sigma}$ be the function defined in \eqref{zeta}. The proof of Lemma \ref{l:mass} shows that 
  \begin{align*}
\int_{\mathbb{R}^{2d}}\zeta_{\sigma}f_{\varepsilon_n}(T)dpdx-\int_{\mathbb{R}^{2d}}\zeta_{\sigma}f_0dpdx&=\int_0^T\int_{\mathbb{R}^{2d}} (\Delta_p \zeta_{\sigma} f_{\varepsilon_n}-\nabla_p \zeta_{\sigma}\cdot p(f_{\varepsilon_n}- f_{\varepsilon_n}^2))dpdxdt\\
&+\int_0^T\int_{\mathbb{R}^{2d}} (\varepsilon_n \Delta_x \zeta_{\sigma} f_{\varepsilon_n}+\nabla_x \zeta_{\sigma}\cdot p(f_{\varepsilon_n}- f_{\varepsilon_n}^2))dpdxdt.
\end{align*}
 Since $ f_{\varepsilon_n}$ converges to  $f$ in $L^1_{\mathrm{loc}}((0,\infty)\times \mathbb{R}^{2d})$ as $n\to \infty,$ the right hand side of this equation converges to $$\int_0^T\int_{\mathbb{R}^{2d}} (\Delta_p \zeta_{\sigma} f-\nabla_p \zeta_{\sigma}\cdot p(f- f^2))dpdxdt
+\int_0^T\int_{\mathbb{R}^{2d}} \nabla_x \zeta_{\sigma}\cdot p(f- f^2)dpdxdt$$ as $n\to \infty$ for every $T>0.$ This implies that $\int_{\mathbb{R}^{2d}}\zeta_{\sigma}f_{\varepsilon_n}(T)dpdx$ has a limit for every $T>0$. It also converges to $\int_{\mathbb{R}^{2d}}\zeta_{\sigma}f(T)dpdx$ for almost every $T>0.$ 
 Hence, we have  obtained
  \begin{align*}
\int_{\mathbb{R}^{2d}}\zeta_{\sigma}f(T)dpdx-\int_{\mathbb{R}^{2d}}\zeta_{\sigma}f_0dpdx&=\int_0^T\int_{\mathbb{R}^{2d}} (\Delta_p \zeta_{\sigma} f-\nabla_p \zeta_{\sigma}\cdot p(f- f^2))dpdxdt\\
&+\int_0^T\int_{\mathbb{R}^{2d}} \nabla_x \zeta_{\sigma}\cdot p(f- f^2)dpdxdt
\end{align*}
for almost every $T>0.$
Using $0\leq f\leq 1$, $|p|f\in L^1([0,T]\times \mathbb{R}^{2d})$, and the properties of $\zeta_{\sigma}$ in \eqref{zeta},  we let $\sigma\to 0$ and get
$$\int_{\mathbb{R}^{2d}}f(T)dpdx=\int_{\mathbb{R}^{2d}}f_0dpdx$$
for almost every $T>0.$
\end{proof}

 \section{Conclusion and Outlook}

In this paper, we have studied   the spatially inhomogeneous Fermi-Dirac-Fokker-Planck equation \eqref{DEq}. In contrast to the existing results for spatially inhomogeneous equation \cite{NS,Chi}, our analysis does not require the initial datum to be sufficiently close to equilibrium or to satisfy a smallness condition. 
Our main results can be summarized as follows:
\begin{enumerate}
\item[(a)] \textbf{Entropy solutions.} We have introduced a notion of entropy solutions which are $BV$ functions and  satisfy \eqref{DEq} in the sense of distributions and in the sense of equality of locally finite Radon measures.  We have proven an $L^1$-contraction principle for entropy solutions, which in particular yields uniqueness for a given initial datum.
\item[(b)] \textbf{Global existence.}
We have proved the global existence and uniqueness of entropy solutions for  initial data satisfying suitable integrability, regularity, and moment assumptions. The existence theory is based on a vanishing-viscosity approach in which a Laplacian in the spatial variable is added to the equation. We derive estimates that are uniform with respect to the viscosity parameter and use compactness to pass to the limit. The limiting function satisfies the entropy formulation, and by the uniqueness theorem, it is the unique entropy solution. 
\item[(c)] \textbf{Properties of the solution.} We have proven that the  solution preserves the total mass and satisfies polynomial-in-time bounds for its  moments with respect to the $p$-variable. 
\end{enumerate}
The results show that, despite the degeneracy in the spatial variable and the presence of the nonlinear term $f(1-f)$ appearing in  both transport and Fokker-Planck operators, one can obtain a global well-posedness theory without relying on perturbative arguments. The vanishing viscosity approach provides a natural framework to construct global solutions under some regularity assumptions on the initial data.

Several questions remain open. In particular, it would be interesting to investigate whether the assumptions on the initial data can be weakened, whether the moment condition $k>2d$ used in the global construction is optimal, and whether the entropy solutions constructed here enjoy additional regularity, in particular hypoelliptic regularization. Another interesting question concerns the long-time behavior of the solution. It is expected that the solution converges to a  Fermi-Dirac distribution; see \cite{NS, APT, Chi}. Understanding the large-time behavior and obtaining explicit  rates on convergence towards the Fermi-Dirac distribution for general, non-small initial data is a natural direction for future work.

More broadly, quantum (or semiclassical) kinetic equations are mathematically more intricate than their classical counterparts because  quantum statistical effects introduce additional nonlinearities and lead to the Fermi-Dirac and Bose-Einstein equilibrium states.
We expect that the techniques developed in this paper may also be useful for the existence analysis  of other quantum kinetic equations.

 
 \bigskip

\textbf{Acknowledgement.}
The author was funded  by the Deutsche Forschungsgemeinschaft (DFG, German Research Foundation) under Germany's Excellence Strategy EXC 2044/2--390685587, Mathematics Münster: Dynamics--Geometry--Structure. 
 \bigskip
 
\textbf{Data Availability.} There is no data associated with the paper.
 \bigskip

\textbf{Conflict of interest.} The author has no conflict of interest to declare.
 \bigskip
 
 
{}

\end{document}